\documentclass[aop,preprint]{imsart}

\RequirePackage[OT1]{fontenc}
\RequirePackage{amsthm,amsmath}
\RequirePackage[numbers]{natbib}
\RequirePackage[colorlinks,citecolor=blue,urlcolor=blue]{hyperref}

\usepackage{amsmath, latexsym, amsfonts, amssymb, amsthm}
\usepackage{graphicx, color,hyperref,dsfont,epsfig,caption,wrapfig,subfig}
\usepackage{pstricks}

\arxiv{arXiv:1804.02942}

\startlocaldefs
\numberwithin{equation}{section}
\theoremstyle{plain}

\endlocaldefs

\numberwithin{equation}{section}

\newtheorem{theorem}{Theorem}[section]
\newtheorem{lemma}[theorem]{Lemma}
\newtheorem{proposition}[theorem]{Proposition}
\newtheorem{corollary}[theorem]{Corollary}

\newtheorem{remark}[theorem]{Remark}
\newtheorem{definition}[theorem]{Definition}

\renewcommand{\P}{\mathbb{P}}
\newcommand{\E}{\mathbb{E}}

\begin{document}

\begin{frontmatter}
\title{The distribution of Gaussian multiplicative chaos on the unit interval}
\runtitle{The distribution of GMC on the interval}

\begin{aug}
\author{\fnms{Guillaume} \snm{Remy}\thanksref{m1}\ead[label=e1]{remy@math.columbia.edu}}
\and
\author{\fnms{Tunan} \snm{Zhu}\thanksref{m2}
\ead[label=e2]{tunan.zhu@ens.fr}}

\thankstext{m1}{$^{,\ddagger}$Research supported in part by the ANR grant Liouville (ANR-15-CE40-0013).}
\runauthor{G. Remy and T. Zhu}

\affiliation{Department of Mathematics, Columbia University.\thanksmark{m1}}
\affiliation{D\'epartement de Math\'ematiques et Applications,\\ \'Ecole Normale Sup\'erieure de Paris.\thanksmark{m2}}

\address{Guillaume Remy\\
Department of Mathematics,\\
Columbia University,\\
2990 Broadway, New York, NY 10027, USA.\\
\printead{e1}}

\address{Tunan Zhu\\
D\'epartement de math\'ematiques et applications,\\
\'Ecole normale sup\'erieure de Paris,\\
45 Rue d'Ulm, 75005 Paris, France.\\
\printead{e2}}

\end{aug}

\begin{abstract}
We consider a sub-critical Gaussian multiplicative chaos (GMC) measure defined on the unit interval $[0,1]$ and prove an exact formula for the fractional moments of the total mass of this measure. Our formula includes the case where log-singularities (also called insertion points) are added in $0$ and $1$, the most general case predicted by the Selberg integral. The idea to perform this computation is to introduce certain auxiliary functions resembling holomorphic observables of conformal field theory that will be solutions of hypergeometric equations. Solving these equations then provides non-trivial relations that completely determine the moments we wish to compute. We also include a detailed discussion of the so-called reflection coefficients appearing in tail expansions of GMC measures and in Liouville theory. Our theorem provides an exact value for one of these coefficients. Lastly we mention some additional applications to small deviations for GMC measures, to the behavior of the maximum of the log-correlated field on the interval and to random hermitian matrices.
\end{abstract}

\begin{keyword}[class=MSC]
\kwd[Primary ]{60G57}
\kwd[; secondary ]{60G15}
\kwd{60G60}
\kwd{60G70}
\kwd{82B23.}
\end{keyword}

\begin{keyword}
\kwd{Log-correlated field}
\kwd{Gaussian multiplicative chaos}
\kwd{Integrable probability}
\kwd{Liouville quantum gravity}
\kwd{Conformal field theory}
\end{keyword}

\end{frontmatter}

\section{Introduction and main result}

Starting from a log-correlated field $X$ one can define the associated Gaussian multiplicative chaos (GMC) measure which has a density with respect to the Lebesgue measure formally given by the exponential of $X$. This definition is formal as $X$ lives in the space of distributions but since the pioneering work of Kahane \cite{Kah} in 1985 it is well understood how to give a rigorous probabilistic definition to these GMC measures by using a limiting procedure. Ever since GMC has been extensively studied in probability theory and mathematical physics with applications including 3d turbulence, statistical physics, mathematical finance, random geometry and 2d quantum gravity. See for instance \cite{review} for a review.

Despite the importance of GMC measures in many active fields of research, rigorous computations have remained until very recently completely out of reach. A large number of exact formulas have been conjectured by the physicists' trick of analytic continuation from positive integers to real numbers (see the explanations below) but with no indication of how to rigorously prove such formulas. A decisive step was made in \cite{Sphere} where a connection is uncovered between GMC measures and the correlation functions of Liouville conformal field theory (LCFT). By implementing the techniques of conformal field theory (CFT) in a probabilistic setting one can hope to perform rigorous computations on GMC.

Indeed, in 2017 a proof was given by Kupiainen-Rhodes-Vargas of the celebrated DOZZ formula \cite{DOZZ1, DOZZ2} first conjectured independently by Dorn and Otto in \cite{DO} and by Zamolodchikov and Zamolodchikov in \cite{ZZ}. This formula gives the value of the three-point correlation function of LCFT on the Riemann sphere and it can also be seen as the first rigorous computation of fractional moments of a GMC measure. Very shortly after, the study of LCFT on the unit disk by the first author led in \cite{remy} to the proof of a probability density for the total mass of the GMC measure on the unit circle. This result proves the conjecture of Fyodorov and Bouchaud stated in \cite{FyBo} and it is the first explicit probability density for a GMC measure obtained in the mathematical literature. 

The present paper presents a third case where exact computations are tractable using CFT-inspired techniques which is the case of GMC on the unit interval $[0,1]$ with $X$ of covariance written below \eqref{covariance}. This model was studied by Bacry-Muzy in \cite{Bacry} where they prove existence of moments and other properties of GMC. Five years after exact formulas for this model on the interval were conjectured independently by Fyodorov-Le Doussal-Rosso in  \cite{FLeR, FLe} and by Ostrovsky in  \cite{Ostro5, Ostro1}. In \cite{FLeR, FLe} the exact formulas are found using an analytic continuation from integers to real numbers but in his papers Ostrovsky went a step further and showed that the formulas did correspond to a valid probability distribution. He also performs the computation of the derivatives of all order in $\gamma$ of \eqref{main_quantity} at $\gamma=0$ which is referred to as the intermittency differentiation. However a crucial analycity argument is missing for this approach to prove rigorously an exact formula. See \cite{Ostro_review} for a beautiful review on all the known results and conjectures for the GMC on the interval (and also for the similar model on the circle) as well as for many additional references. 

The main result of our work is precisely the proof of these conjectures for the GMC measure on $[0,1]$. The major input of our paper is the introduction of two auxiliary functions that will be solutions to hypergeometric equations, see Proposition \ref{BPZ}. This observation was to the best of our knowledge unknown to the statistical physics community although an analogous statement was known in the case of the Selberg integral, see \cite{kaneko} and the explanations of subsection \ref{strategy}. By studying the solution space of these differential equations we obtain non-trivial relations on the GMC that allow us to rigorously prove the formulas conjectured by physicists.

Let us now introduce the framework of our paper. We consider the log-correlated field $X$ on the interval $[0,1]$ with covariance given for $x,y \in [0,1]$ by:
\begin{equation}\label{covariance}
\mathbb{E}[ X(x) X(y) ] = 2 \ln \frac{1}{\vert x - y \vert}.\footnote{Our normalization differs from the $\ln \frac{1}{\vert x - y \vert}$ usually found in the literature.}
\end{equation}
Because of the singularity of its covariance $X$ is not defined pointwise and lives in the space of distributions. We define the associated GMC measure on the interval $[0,1]$ by the standard regularization procedure for $ \gamma \in (0,2)$,
\begin{equation}\label{def_GMC}
e^{\frac{\gamma}{2} X(x) } dx := \lim_{\delta  \rightarrow 0} e^{\frac{\gamma}{2} X_{\delta}(x) - \frac{\gamma^2}{8}\mathbb{E}[X_{\delta}(x)^2 ] } dx,
\end{equation}
where $X_{\delta}$ stands for any reasonable cut-off of $X$ that converges to $X$ as $\delta $ goes to $0$. The convergence in \eqref{def_GMC} is in probability with respect to the weak topology of measures, meaning that for all continuous test functions $f: [0,1] \mapsto \mathbb{R} $ the following holds in probability:
\begin{equation}
\int_0^1  f(x) e^{\frac{\gamma}{2} X(x) } dx = \lim_{\delta  \rightarrow 0} \int_0^1 f(x) e^{\frac{\gamma}{2} X_{\delta}(x) - \frac{\gamma^2}{8}\mathbb{E}[X_{\delta}(x)^2 ] } dx.
\end{equation}
For an elementary proof of this convergence see \cite{Ber}. We now introduce the main quantity of interest of our paper, for $\gamma \in (0,2)$ and for real $p$, $a$, $b$:
\begin{equation}\label{main_quantity}
M(\gamma, p, a,b )  :=  \mathbb{E}[ (\int_{0}^{1}  x^{a}(1-x)^{b}
e^{\frac{\gamma}{2} X( x) } d x)^p ].
\end{equation}
This quantity is the moment $p$ of the total mass of our GMC measure with two ``insertion points" in $0$ and $1$ of weight $a$ and $b$. The theory of Gaussian multiplicative chaos tells us that these moments are non-trivial, i.e. different from 0 and $ + \infty $, if and only if:
\begin{equation}\label{bounds}
a > - \frac{\gamma^2}{4} - 1, \quad  b > - \frac{\gamma^2}{4} - 1, \quad p < \frac{4}{\gamma^2} \wedge (1+\frac{4}{\gamma^2}( 1 + a)) \wedge (1+\frac{4}{\gamma^2}( 1 + b)).
\end{equation}
The first two conditions are required for the GMC measure to integrate the fractional powers $x^a$ and $(1-x)^b$. Notice that this condition is weaker than the one we would get with the Lebesgue measure, $a>-1$ and $b >-1$.\footnote{Proving Theorem \ref{main_result} for $-1 - \frac{\gamma^2}{4} <a\le -1$ will require a lot of technical work as precise estimates on GMC measures are required to show that Proposition \ref{BPZ} holds in this case.} We then have a bound on the moment $p$, the first part $p < \frac{4}{\gamma^2}$ is the standard condition for the existence of a moment of GMC without insertions. The additional condition on $p$, $p <  (1+\frac{4}{\gamma^2}( 1 + a)) \wedge (1+\frac{4}{\gamma^2}( 1 + b))$, comes from the presence of the insertions. A proof of the bounds \eqref{bounds} can be found in \cite{Houches, Disk}.

Now the goal of our paper is simply to prove the following exact formula for $ M(\gamma, p, a,b ) $:

\begin{theorem}\label{main_result}
For $\gamma \in (0,2)$ and for $p$, $a$, $b$ satisfying \eqref{bounds}\footnote{The result also holds for all complex $p$ such that $\text{Re}(p)$ satisfies the bounds \eqref{bounds}.}, $M(\gamma, p, a,b )$ is given by,
\begin{footnotesize}
\begin{equation*}
  \frac{(2 \pi)^p  \Gamma_{\frac{\gamma}{2}}( \frac{2}{\gamma}(a+1 ) -(p-1) \frac{\gamma}{2}  )
\Gamma_{\frac{\gamma}{2}}( \frac{2}{\gamma}(b+1 ) -(p-1) \frac{\gamma}{2}  ) \Gamma_{\frac{\gamma}{2}}(
\frac{2}{\gamma}(a+b+2 ) -(p-2) \frac{\gamma}{2}  ) \Gamma_{\frac{\gamma}{2}} ( \frac{2}{\gamma} -p \frac{\gamma}{2} ) }{(\frac{\gamma}{2})^{p\frac{\gamma^2}{4} } \Gamma( 1  -  \frac{\gamma^2}{4} ) ^p \Gamma_{\frac{\gamma}{2}}(\frac{2}{\gamma}) \Gamma_{\frac{\gamma}{2}}( \frac{2}{\gamma}(a+1 ) +
\frac{\gamma}{2}  ) \Gamma_{\frac{\gamma}{2}}( \frac{2}{\gamma}(b+1 ) + \frac{\gamma}{2}  )
\Gamma_{\frac{\gamma}{2}}( \frac{2}{\gamma}(a+b+2 ) -(2p-2) \frac{\gamma}{2}  )  
},
\end{equation*}
\end{footnotesize}
where the function $\Gamma_{\frac{\gamma}{2}}(x)$ is defined for $x >0$ and $ Q = \frac{\gamma}{2} + \frac{2}{\gamma} $ by:
\begin{small}
\begin{equation}\label{double_gamma}
\ln \Gamma_{\frac{\gamma}{2}}(x) = \int_0^{\infty} \frac{dt}{t} \left[ \frac{ e^{-xt} -e^{- \frac{Qt}{2}}   }{(1 - e^{- \frac{\gamma t}{2}})(1 - e^{- \frac{2t}{\gamma}})} - \frac{( \frac{Q}{2} -x)^2 }{2}e^{-t} + \frac{ x -\frac{Q}{2}  }{t} \right]. 
\end{equation}
\end{small}
\end{theorem}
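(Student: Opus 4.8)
The plan is to avoid any direct attempt at analytically continuing the integer moments, and instead to pin down $M(\gamma,p,a,b)$ through exact functional equations coming from two auxiliary observables, exactly as announced in Proposition \ref{BPZ}. Writing $s=\frac{2}{\gamma}(a+1)$ for the natural variable appearing in Theorem \ref{main_result}, I would first reduce the theorem to two ingredients: (i) two multiplicative ``shift equations'' that move $a$ (and, by the symmetry $x\mapsto 1-x$, also $b$) by the two incommensurate steps naturally associated with the double Gamma function $\Gamma_{\gamma/2}$ of \eqref{double_gamma}, namely $\gamma/2$ and $2/\gamma$ in the variable $s$; and (ii) a single normalizing value. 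The base case is $p=1$, where the renormalization in \eqref{def_GMC} forces the total mass to have the same first moment as Lebesgue measure, so that $M(\gamma,1,a,b)=\int_0^1 x^a(1-x)^b\,dx=\frac{\Gamma(a+1)\Gamma(b+1)}{\Gamma(a+b+2)}$, which one checks is consistent with the claimed formula.

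To produce the shift equations I would introduce, for a point $t$ outside $[0,1]$, two observables obtained by inserting a degenerate vertex operator of weight $-\gamma/2$, respectively $-2/\gamma$, at $t$: schematically
\[
G(t)=\mathbb{E}\Big[\Big(\int_0^1 x^{a}(1-x)^{b}\,\omega_t(x)\,e^{\frac{\gamma}{2}X(x)}\,dx\Big)\Big(\int_0^1 x^{a}(1-x)^{b}e^{\frac{\gamma}{2}X(x)}\,dx\Big)^{p-1}\Big],
\]
where $\omega_t(x)$ is the power of $|t-x|$ produced by the degenerate insertion through the Girsanov/Cameron--Martin shift. The content of Proposition \ref{BPZ} is that, after extracting explicit prefactors in $t$, $G$ extends to a solution of a second-order Fuchsian ODE on $\mathbb{C}\setminus\{0,1\}$ with regular singular points at $0,1,\infty$, i.e. a hypergeometric equation whose local exponents are explicit in $\gamma,a,b,p$. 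Establishing this rigorously is the first technical heart of the argument: it requires differentiating twice under the expectation and justifying Gaussian integration by parts for the singular GMC measure, and it is precisely here that the delicate range $-1-\gamma^2/4<a\le -1$ flagged in the footnote after \eqref{bounds} demands sharp moment and tail estimates on the measure.

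The second, and to my mind principal, obstacle is to convert the ODE into relations on $M$. The solution space of a hypergeometric equation is two-dimensional, and the classical connection formulas express the basis diagonalizing the monodromy at $t=1$ in terms of the basis diagonalizing it at $t=0$, with coefficients that are ratios of ordinary $\Gamma$ functions. On the other hand, a probabilistic fusion (operator product expansion) estimate identifies the coefficients of the two leading local behaviours of $G(t)$ as $t\to 0$ and as $t\to 1$ with values of $M$ at shifted parameters: as the degenerate insertion merges with an endpoint it shifts $a$ (or $b$) and $p$ by the quantized amount dictated by the weight $-\gamma/2$ or $-2/\gamma$. Matching the connection-coefficient description of these ratios against the probabilistic one yields the desired shift equations; the reflection coefficient discussed in the introduction appears as the subleading ``reflected'' branch in this $t\to 0,1$ asymptotics, which is why its precise value ultimately drops out of the theorem.

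Finally I would solve the functional system. For $\gamma^2\notin\mathbb{Q}$ the two steps $\gamma/2$ and $2/\gamma$ are incommensurate, so a function of $s$ satisfying both shift equations, analytic and of controlled growth, is unique up to a multiplicative constant; the double Gamma $\Gamma_{\gamma/2}$ is constructed in \eqref{double_gamma} precisely so that its shift ratios reproduce the ordinary $\Gamma$ factors produced above, and hence the explicit product in Theorem \ref{main_result} is forced. The remaining constant and the dependence on $p$ are fixed by the normalization at $p=1$ together with analyticity of $M$ in $p$ on the strip \eqref{bounds}, and the restriction $\gamma^2\notin\mathbb{Q}$ is removed at the end by continuity in $\gamma$. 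The main difficulty throughout is not the algebra of hypergeometric connection coefficients but the analytic control of the fusion asymptotics of $G(t)$ at the endpoints, carried out uniformly enough to read off the shifted moments.
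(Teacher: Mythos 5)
Your overall architecture does track the paper's: your two degenerate insertions of weight $-\gamma/2$ and $-2/\gamma$ are exactly the observables $U(t)$ and $\tilde U(t)$ of Proposition \ref{BPZ}, the hypergeometric connection formulas do produce shift equations whose steps in $s=\frac{2}{\gamma}(a+1)$ are $\frac{\gamma}{2}$ and $\frac{2}{\gamma}$, and the restriction $\frac{\gamma^2}{4}\notin\mathbb{Q}$ is indeed removed by continuity in $\gamma$ at the end. The genuine gap is in your last step. Once the $a,b$-dependence is pinned down (Proposition \ref{prop_a}), what remains is an unknown function $C(p)$ of $p$, and you assert it is fixed by ``normalization at $p=1$ together with analyticity of $M$ in $p$.'' That is not a proof: analyticity on the interval \eqref{bounds} plus one value does not exclude multiplying $C(p)$ by a nontrivial analytic $1$-periodic factor equal to $1$ at the normalization point. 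Even granting the step-$1$ recursion in $p$ that your subleading fusion coefficients would yield (the paper's \eqref{relation_c1}, obtained from the second-order Taylor expansion of $U(t)$ at $t\to0_-$ with $0<a<1-\frac{\gamma^2}{4}$, $b=0$), this only determines $C$ along $p, p-1, p-2,\dots$. The paper is explicit that this cannot suffice: the negative moments generated by that recursion do not determine the law of the total mass, because of the log-normal global mode in Corollary \ref{law_ostro}, so no soft normalization-plus-regularity argument at real $p$ can close the proof. A rigorous version of your idea would need a Liouville-type argument for $1$-periodic functions in complex $p$ with uniform bounds on vertical strips (including lower bounds on the candidate formula, i.e.\ asymptotics of $\Gamma_{\frac{\gamma}{2}}$), none of which you state.

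What the paper actually does at this point is derive a \emph{second}, incommensurate shift equation in $p$, namely \eqref{relation_c2} relating $C(p)$ to $C(p-\frac{4}{\gamma^2})$, and this is the technical crux of the whole argument. It is not visible in the generic expansion: one must take $b=0$ and $a$ in the narrow window $(-1-\frac{\gamma^2}{4},\,-1-\frac{\gamma^2}{4}+a_0)$, where the $|t|^{1+a+\frac{\gamma^2}{4}}$ branch of $U(t)$ as $t\to0_-$ is governed by the reflection coefficient (Lemma \ref{lemma fusion}, proved via the Williams decomposition and Kahane's inequality), then analytically continue the resulting identity in $a$ and specialize to $a=-\frac{(k+1)\gamma^2}{4}$ for $\frac{4}{k+1}<\gamma^2<\frac{4}{k}$ to convert it, using the $a$-shift equations, into the pure $p$-shift \eqref{shiftc2}. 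Only the pair of steps $1$ and $\frac{4}{\gamma^2}$, incommensurate for $\frac{4}{\gamma^2}\notin\mathbb{Q}$, forces $C(p)$ to be constant times the claimed expression (a continuous function with two incommensurate periods is constant), after which the constant is fixed at $p=0$ where $M\equiv1$. A secondary structural point: the paper's connection problem is between $t=0$ and $t=-\infty$, not between $0$ and $1$; at infinity the coefficient $D_2$ of the second exponent vanishes (a short interpolation estimate using \eqref{bounds}), which collapses the $2\times2$ matrix \eqref{hpy1} to a single linear relation and yields the clean equations \eqref{shift_equa1}, \eqref{shift_equa2}. With a $0$-to-$1$ matching, both endpoints carry subleading coefficients at shifted parameters, so the ratios you want do not drop out directly.
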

As a corollary by choosing $a=b=0$ we obtain the value of the moments of the GMC measure without insertions:
\begin{corollary} For $\gamma \in (0,2)$ and $p < \frac{4}{\gamma^2}$:
\begin{small}
\begin{equation*}
\mathbb{E}[ (\int_{0}^{1}   e^{\frac{\gamma}{2} X( x) } d x)^p ]  = \frac{(2 \pi)^p (\frac{2}{\gamma})^{p\frac{\gamma^2}{4} } }{\Gamma( 1  -  \frac{\gamma^2}{4} ) ^p  }   \frac{\Gamma_{\frac{\gamma}{2}}( \frac{2}{\gamma} -(p-1) \frac{\gamma}{2}  )^2
 \Gamma_{\frac{\gamma}{2}}(
\frac{4}{\gamma} -(p-2) \frac{\gamma}{2}  ) \Gamma_{\frac{\gamma}{2}} ( \frac{2}{\gamma} -p\frac{\gamma}{2} ) }{\Gamma_{\frac{\gamma}{2}}(\frac{2}{\gamma}) \Gamma_{\frac{\gamma}{2}}( \frac{2}{\gamma} +
\frac{\gamma}{2}  )^2 
\Gamma_{\frac{\gamma}{2}}( \frac{4}{\gamma} -(2p-2) \frac{\gamma}{2}  )  
}. 
\end{equation*}
\end{small}
\end{corollary}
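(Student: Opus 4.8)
The plan is to obtain the Corollary directly as the $a=b=0$ specialization of Theorem \ref{main_result}, so that the work is purely a substitution together with an elementary bookkeeping check on the constraints and on the prefactor. By definition $M(\gamma,p,0,0)=\mathbb{E}[(\int_0^1 e^{\frac{\gamma}{2}X(x)}dx)^p]$ is exactly the quantity we wish to evaluate, so it suffices to read off the right-hand side of Theorem \ref{main_result} at $a=b=0$.

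First I would verify that the hypotheses collapse correctly. Setting $a=b=0$, the two insertion conditions $a>-\frac{\gamma^2}{4}-1$ and $b>-\frac{\gamma^2}{4}-1$ become $0>-\frac{\gamma^2}{4}-1$, which holds for every $\gamma\in(0,2)$ and hence impose nothing. The moment bound in \eqref{bounds} reduces to $p<\frac{4}{\gamma^2}\wedge(1+\frac{4}{\gamma^2})\wedge(1+\frac{4}{\gamma^2})$, and since $1+\frac{4}{\gamma^2}>\frac{4}{\gamma^2}$ the minimum is simply $\frac{4}{\gamma^2}$. Thus the admissible range of Theorem \ref{main_result} becomes exactly $p<\frac{4}{\gamma^2}$, matching the hypothesis of the Corollary.

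Next I would substitute $a=b=0$ into the arguments of the double Gamma factors. The three combinations appearing in the theorem specialize to $\frac{2}{\gamma}(a+1)=\frac{2}{\gamma}$, $\frac{2}{\gamma}(b+1)=\frac{2}{\gamma}$ and $\frac{2}{\gamma}(a+b+2)=\frac{4}{\gamma}$. Consequently the two factors carrying $a+1$ and $b+1$ coincide and produce the squares $\Gamma_{\frac{\gamma}{2}}(\frac{2}{\gamma}-(p-1)\frac{\gamma}{2})^2$ in the numerator and $\Gamma_{\frac{\gamma}{2}}(\frac{2}{\gamma}+\frac{\gamma}{2})^2$ in the denominator; the factor carrying $a+b+2$ becomes $\Gamma_{\frac{\gamma}{2}}(\frac{4}{\gamma}-(p-2)\frac{\gamma}{2})$ in the numerator and $\Gamma_{\frac{\gamma}{2}}(\frac{4}{\gamma}-(2p-2)\frac{\gamma}{2})$ in the denominator, while the factors $\Gamma_{\frac{\gamma}{2}}(\frac{2}{\gamma}-p\frac{\gamma}{2})$ and $\Gamma_{\frac{\gamma}{2}}(\frac{2}{\gamma})$ are unchanged. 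This already reproduces the entire ratio of double Gamma functions displayed in the Corollary.

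Finally I would reconcile the elementary prefactors. In Theorem \ref{main_result} the power of $\frac{\gamma}{2}$ sits in the denominator as $(\frac{\gamma}{2})^{p\frac{\gamma^2}{4}}$, whereas the Corollary writes $(\frac{2}{\gamma})^{p\frac{\gamma^2}{4}}$ in the numerator; these agree since $(\frac{2}{\gamma})^{p\frac{\gamma^2}{4}}=(\frac{\gamma}{2})^{-p\frac{\gamma^2}{4}}$, and the remaining factors $(2\pi)^p$ and $\Gamma(1-\frac{\gamma^2}{4})^{-p}$ carry over verbatim. There is genuinely no analytic obstacle here: the only points demanding a moment's attention are the sign flip in the exponent of $\frac{\gamma}{2}$ and the merging of the two insertion-dependent factors into squares, both purely algebraic. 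The entire content of the Corollary is therefore contained in Theorem \ref{main_result}.
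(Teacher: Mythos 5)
Your proposal is correct and is exactly the paper's own route: the paper obtains this corollary simply by setting $a=b=0$ in Theorem \ref{main_result}, and your substitution, the collapse of the bounds \eqref{bounds} to $p<\frac{4}{\gamma^2}$, and the rewriting of $(\frac{\gamma}{2})^{p\frac{\gamma^2}{4}}$ as $(\frac{2}{\gamma})^{p\frac{\gamma^2}{4}}$ in the numerator are all accurate.
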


Thanks to the computations performed by Ostrovsky \cite{Ostro3}, we can also state our main result in the following equivalent way:

\begin{corollary}\label{law_ostro} The following equality in law holds,
\begin{equation}
\int_{0}^{1}  x^{a}(1-x)^{b} e^{\frac{\gamma}{2} X( x) } d x = 2 \pi 2^{-(3(1 + \frac{\gamma^2}{4})+ 2( a + b))   }  L Y_{\gamma} X_1 X_2 X_3 ,
\end{equation}
where $L, Y_{\gamma}, X_1, X_2, X_3$ are five independent random variables in $\mathbb{R_+}$ with the following laws:
\begin{align*}
L &= \exp (\mathcal{N}(0, \gamma^2 \ln 2 )) \\
Y_{\gamma} &= \frac{1}{\Gamma(1 - \frac{\gamma^2}{4})} \mathcal{E}(1)^{-\frac{\gamma^2}{4}} \\
X_1 &= \beta_{2,2}^{-1}(1, \frac{4}{\gamma^2} ;1 +  \frac{4}{\gamma^2}(1 + a),  \frac{2(b-a)}{\gamma^2}, \frac{2(b-a)}{\gamma^2}), \\
X_2 &= \beta_{2,2}^{-1}( 1,\frac{4}{\gamma^2} ;1 +  \frac{2}{\gamma^2}(2 + a + b),  \frac{1}{2},  \frac{2}{\gamma^2}  ), \\
X_3 &= \beta_{2,2}^{-1}( 1,\frac{4}{\gamma^2} ; 1 + \frac{4}{\gamma^2},  \frac{1}{2} + \frac{2}{\gamma^2}(1 + a +b ) ,  \frac{1}{2} + \frac{2}{\gamma^2}(1 + a +b )  ).
\end{align*}
Here $\mathcal{E}(1)$ is an exponential law of parameter $1$ and $\beta_{2,2}$ is a special beta law defined in appendix \ref{sec_special}. It satisfies $\beta_{2,2} \in [0,1]$.
\end{corollary}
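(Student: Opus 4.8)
The plan is to establish the equality in law by matching full (complex) Mellin transforms and then invoking moment determinacy, which is exactly the route opened up by Theorem \ref{main_result}. Write $Z := \int_0^1 x^a(1-x)^b e^{\frac{\gamma}{2}X(x)}\,dx$ and $R := 2\pi\, 2^{-(3(1+\frac{\gamma^2}{4})+2(a+b))} L Y_\gamma X_1 X_2 X_3$. By Theorem \ref{main_result}, together with the footnoted extension to complex $p$, the map $p \mapsto \mathbb{E}[Z^p] = M(\gamma,p,a,b)$ is known in closed form and is analytic on the vertical strip cut out by $\mathrm{Re}(p)$ satisfying \eqref{bounds}. It therefore suffices to show that $\mathbb{E}[R^p]$ equals the same explicit expression on this strip, and that a positive random variable with this Mellin transform is unique.

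First I would compute $\mathbb{E}[R^p]$. Since $L,Y_\gamma,X_1,X_2,X_3$ are independent, the transform factorizes:
\begin{equation*}
\mathbb{E}[R^p] = (2\pi)^p\, 2^{-p(3(1+\frac{\gamma^2}{4})+2(a+b))}\, \mathbb{E}[L^p]\,\mathbb{E}[Y_\gamma^p]\,\mathbb{E}[X_1^p]\,\mathbb{E}[X_2^p]\,\mathbb{E}[X_3^p].
\end{equation*}
The log-normal factor gives $\mathbb{E}[L^p] = 2^{\frac{p^2\gamma^2}{2}}$, and the exponential factor gives $\mathbb{E}[Y_\gamma^p] = \Gamma(1-\frac{\gamma^2}{4}p)/\Gamma(1-\frac{\gamma^2}{4})^p$, valid precisely on $p<\frac{4}{\gamma^2}$. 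The three remaining moments are those of the special beta laws $\beta_{2,2}$; I would insert their Mellin transforms as recorded in appendix \ref{sec_special}, following Ostrovsky \cite{Ostro3}, each of which is a ratio of double Gamma functions $\Gamma_{\frac{\gamma}{2}}$.

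The core step is then the purely algebraic verification that this product reproduces the ratio of double Gamma functions in Theorem \ref{main_result}. This combines the shift equation for $\Gamma_{\frac{\gamma}{2}}$, which converts the factor $\Gamma(1-\frac{\gamma^2}{4}p)$ and the prefactor $\Gamma(1-\frac{\gamma^2}{4})^p$ into double-Gamma form, with the reflection and duplication identities for the ordinary Gamma function. The delicate bookkeeping lies in the powers of $2$ and of $\frac{\gamma}{2}$: the Gaussian contribution $2^{\frac{p^2\gamma^2}{2}}$ coming from $L$, together with the explicit prefactor $2^{-p(3(1+\frac{\gamma^2}{4})+2(a+b))}$, must cancel exactly against the constants produced by the $\beta_{2,2}$ Mellin transforms and the shift equations, leaving only the $(\frac{\gamma}{2})^{p\gamma^2/4}$ and $(2\pi)^p$ of the theorem. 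I expect this matching of normalizations to be the main obstacle, since it is where an error would be hardest to detect.

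Finally, to pass from equality of Mellin transforms to equality in law I would argue determinacy. Both $Z$ and $R$ are positive random variables whose complex moments agree along a vertical line $\mathrm{Re}(p)=c$ interior to \eqref{bounds}; the common transform is a ratio of double Gamma functions, which decays fast enough as $|\mathrm{Im}(p)|\to\infty$ along that line for Mellin inversion to apply and force the two densities to coincide. Equivalently one may quote directly Ostrovsky's proof \cite{Ostro3} that the product $L Y_\gamma X_1 X_2 X_3$ is uniquely determined by its moments. The only point requiring care here is to confirm the decay estimate uniformly over the admissible range of $a$ and $b$, so that inversion is licensed throughout.
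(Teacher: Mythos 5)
Your proposal is correct and is essentially the paper's own route: the paper deduces the corollary from Theorem \ref{main_result} together with Ostrovsky's computations \cite{Ostro3}, which are precisely the Mellin-transform factorization and double-Gamma bookkeeping you outline (independence gives the product of $\mathbb{E}[L^p]=2^{p^2\gamma^2/2}$, $\mathbb{E}[Y_\gamma^p]=\Gamma(1-\tfrac{\gamma^2}{4}p)\Gamma(1-\tfrac{\gamma^2}{4})^{-p}$ and the three $\beta_{2,2}^{-1}$ transforms from appendix \ref{sec_special}, to be matched against the theorem's formula). One caution on your last step: the aside that the product $L Y_{\gamma} X_1 X_2 X_3$ is ``uniquely determined by its moments'' must be read in the sense of your primary argument --- uniqueness from the complex Mellin transform on a vertical line, where the log-normal factor even supplies Gaussian decay in $\mathrm{Im}(p)$ --- and not in the sense of real moments, since the paper emphasizes in subsection \ref{sec_small_dev} that the negative real moments do \emph{not} determine the law, precisely because of the factor $L$.
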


The advantage of this formulation is that it is more transparent than the large formula of Theorem \ref{main_result}. The log-normal law $L$ is a global mode coming from the fact that $X$ is not of zero average on $[0,1]$, see the discussion of subsection \ref{sec_small_dev}. The random variable $Y_{\gamma}$ is actually the law of the total mass of the GMC measure defined on the unit circle - see \cite{remy} - and it will play a crucial role in understanding the small deviations of GMC, see again subsection \ref{sec_small_dev}. Lastly the generalized beta laws studied in \cite{Ostro2} have a complicated definition but take values in $[0,1]$ just like the standard beta law.

\subsection{Strategy of the proof}\label{strategy}
We start off with the well known observation that a formula can be given for $ M(\gamma, p, a,b ) $ in the very special case where $p \in \mathbb{N}$, $a>-1$, $b> -1$ and $p$ satisfying \eqref{bounds}. Indeed, in this case the computation reduces to a real integral - the famous Selberg integral - whose value is known, see for instance \cite{Selberg}. This is because for a positive integer moment we can write $p$ integrals and exchange them with the expectation $\mathbb{E}[\cdot]$. More precisely for $a,b>-1$, $p$ satisfying \eqref{bounds} and $p \in \mathbb{N}$ we have, using any suitable regularization procedure:
\begin{align}\label{Selberg}
\mathbb{E}&[ (\int_{0}^{1}  x^{a}(1-x)^{b} e^{\frac{\gamma}{2} X( x) } d x)^p ]  \nonumber \\ 
 &= \lim_{\delta \rightarrow 0} \mathbb{E}[ (\int_0^1 x^a (1-x)^b e^{ \frac{\gamma}{2}X_{\delta}(x) - \frac{\gamma^2}{8} \mathbb{E}[ X_{\delta}(x)^2]  } dx)^p ] \nonumber \\ 
&= \lim_{\delta \rightarrow 0}  \int_{[0,1]^p} \prod_{i=1}^p x_i^a (1-x_i)^b \mathbb{E}[ \prod_{i=1}^p e^{ \frac{\gamma}{2}X_{\delta}(x_i) - \frac{\gamma^2}{8} \mathbb{E}[ X_{\delta}(x_i)^2]  }  ] dx_1 \dots dx_p   \nonumber \\ 
&=  \int_{[0,1]^p} \prod_{i=1}^p x_i^a (1-x_i)^b e^{ \frac{\gamma^2}{4} \sum_{i<j} \mathbb{E}[ X(x_i) X(x_j) ] } dx_1 \dots dx_p \nonumber \\ 
&= \int_{[0,1]^p} \prod_{i=1}^p x_i^a (1-x_i)^b  \prod_{i<j} \frac{1}{\vert x_i - x_j \vert^{\frac{\gamma^2}{2}}}  dx_1 \dots dx_p   \nonumber \\
&= \prod_{j=1}^p \frac{\Gamma(1 + a -(j-1)\frac{\gamma^2}{4} )
\Gamma(1 + b -(j-1)\frac{\gamma^2}{4} ) \Gamma( 1 - j \frac{\gamma^2}{4}  ) 
}{\Gamma(2 + a + b - (p + j -2)\frac{\gamma^2}{4}  ) \Gamma(1 - \frac{\gamma^2}{4}
)}.
\end{align}
The last line is precisely given by the Selberg integral. It is then natural to look for an analytic continuation of this expression from positive integer $p$ to any real $p$ satisfying \eqref{bounds}. Notice that giving the analytic continuation of a such a quantity is a highly non-trivial problem as $p$ appears both in the argument of the Gamma functions as well as in the number of terms in the product. To find the right candidate for the analytic continuation we start by writing down the following relations that we will refer to as the shift equations. They are deduced by simple algebra from \eqref{Selberg} again for $p \in \mathbb{N}$ and under the bounds \eqref{bounds},
\begin{small}
\begin{align}\label{shift_equa1}
\frac{M(\gamma, p, a + \frac{\gamma^2}{4} ,b )}{M(\gamma, p, a,b )}  &=  
\frac{\Gamma(1 + a + \frac{\gamma^2}{4} ) \Gamma(2 + a + b - (2p -2
)\frac{\gamma^2}{4}  )  }{\Gamma(1 + a -(p-1) \frac{\gamma^2}{4} ) \Gamma(2 + a + b
- (p -2 )\frac{\gamma^2}{4}  ) },  \\ \label{shift_equa2}
\frac{M(\gamma, p, a + 1 ,b )}{M(\gamma, p, a,b )}  &=  
\frac{\Gamma(\frac{4}{\gamma^2}(1 +a) +1 ) \Gamma( \frac{4}{\gamma^2
} (2 + a +b) - (2p-2)) }{\Gamma(\frac{4}{\gamma^2}(1 +a) -(p-1 )) \Gamma(
\frac{4}{\gamma^2
} (2 + a +b) - (p-2)) }, 
\end{align}
\end{small}
and for $p \in \mathbb{N^*}$ under the bounds \eqref{bounds},
\begin{footnotesize}
\begin{align}\label{shift_equa3}
&\frac{M(\gamma, p, a,b )}{M(\gamma, p-1, a,b )}  \\ \nonumber
  &= \frac{\Gamma(1 + a -
(p-1)\frac{\gamma^2}{4} ) \Gamma(1 + b - (p-1)\frac{\gamma^2}{4} ) \Gamma( 1 - p
\frac{\gamma^2}{4}  ) \Gamma(2 + a + b - (p -2 )\frac{\gamma^2}{4}  )  }{\Gamma(2 +
a + b - (2p -3 )\frac{\gamma^2}{4}  ) \Gamma(2 + a + b - (2p-2  )\frac{\gamma^2}{4}  )
\Gamma(1 - \frac{\gamma^2}{4} )}.   
\end{align}
\end{footnotesize}

Of course similar shift equations hold for $b$ but as there is a symmetry $M(\gamma,p,a,b)=M(\gamma,p,b,a)$ we will write everything only for $a$. The reason why the function $\Gamma_{\frac{\gamma}{2}}(x)$ introduced in Theorem \ref{main_result} appears is that it verifies the following two relations, for $ \gamma \in (0,2)$ and $x >0$,
\begin{align}
\frac{\Gamma_{\frac{\gamma}{2}}(x)}{\Gamma_{\frac{\gamma}{2}}(x + \frac{\gamma}{2}) }&= \frac{1}{\sqrt{2 \pi}}
\Gamma(\frac{\gamma x}{2}) ( \frac{\gamma}{2} )^{ -\frac{\gamma x}{2} + \frac{1}{2}
}, \\
\frac{\Gamma_{\frac{\gamma}{2}}(x)}{\Gamma_{\frac{\gamma}{2}}(x + \frac{2}{\gamma}) }&= \frac{1}{\sqrt{2 \pi}} \Gamma(\frac{2
x}{\gamma}) ( \frac{\gamma}{2} )^{ \frac{2 x}{\gamma} - \frac{1}{2} }.
\end{align}
See appendix \ref{sec_special} for more details on $\Gamma_{\frac{\gamma}{2}}(x)$. Therefore we can use $\Gamma_{\frac{\gamma}{2}}(x)$ to construct a candidate function that will verify all the shift equations \eqref{shift_equa1}, \eqref{shift_equa2}, \eqref{shift_equa3} not only for $p \in  \mathbb{N}$ but for any real $p$ satisfying the bounds \eqref{bounds}. More precisely for any function $C(p)$ of $p$ (and $\gamma$) the following quantity,
\begin{small}
\begin{equation}\label{expression_a_b}
  C(p) \frac{\Gamma_{\frac{\gamma}{2}}( \frac{2}{\gamma}(a+1 ) -(p-1) \frac{\gamma}{2}  )
\Gamma_{\frac{\gamma}{2}}( \frac{2}{\gamma}(b+1 ) -(p-1) \frac{\gamma}{2}  ) \Gamma_{\frac{\gamma}{2}}(
\frac{2}{\gamma}(a+b+2 ) -(p-2) \frac{\gamma}{2}  )  }{\Gamma_{\frac{\gamma}{2}}( \frac{2}{\gamma}(a+1 ) +
\frac{\gamma}{2}  ) \Gamma_{\frac{\gamma}{2}}( \frac{2}{\gamma}(b+1 ) + \frac{\gamma}{2}  )
\Gamma_{\frac{\gamma}{2}}( \frac{2}{\gamma}(a+b+2 ) -(2p-2) \frac{\gamma}{2}  )  
}, 
\end{equation}
\end{small}
is a solution to the shift equations \eqref{shift_equa1}, \eqref{shift_equa2}. Notice that for $\frac{\gamma^2}{4} \notin \mathbb{Q}$ these two shift equations completely determine the dependence on $a$ (and on $b$ by symmetry) of $M(\gamma, p,a,b)$. Then by a standard continuity argument in $\gamma$ we will be able to extend the expression \eqref{expression_a_b} to all $\gamma \in (0,2)$. Next the equation \eqref{shift_equa3} translates into a constraint on the unknown function $C(p)$:
\begin{equation}\label{relation_c1}
\frac{C(p)}{C(p-1)}  = \sqrt{2 \pi}  (\frac{\gamma}{2})^{(p-1) \frac{\gamma^2}{4}-\frac{1}{2} }  \frac{\Gamma(1 -p \frac{\gamma^2}{4} )   }{ \Gamma( 1  -  \frac{\gamma^2}{4} )  }.
\end{equation}
We see that \eqref{relation_c1} is not enough to fully determine the function $C(p)$. An additional shift equation that is a priori not predicted by the Selberg integral \eqref{Selberg} is required. We will indeed prove that we have,
\begin{equation}\label{relation_c2}
\frac{C(p)}{C(p - \frac{4}{\gamma^2})} = f(\gamma) ( \frac{\gamma}{2} )^{-p} \Gamma(\frac{4}{\gamma^2} -p),
\end{equation}
where $f(\gamma)$ is an unknown positive function of $\gamma$. Now combining \eqref{relation_c1} and \eqref{relation_c2} completely determines the function $C(p)$ again up to an unknown constant $c_{\gamma}$ of $\gamma$:
\begin{equation}\label{shift_c1}
C(p) =  c_{\gamma} \frac{(2 \pi)^p}{\Gamma( 1  -  \frac{\gamma^2}{4} ) ^p}  (\frac{2}{\gamma})^{p\frac{\gamma^2}{4}}  \Gamma_{\frac{\gamma}{2}} ( \frac{2}{\gamma} -p \frac{\gamma}{2} ).
\end{equation}
This last constant $c_{\gamma}$ is evaluated by choosing $p=0$ and thus we arrive at the function of Theorem \ref{main_result} giving the expression of $M(\gamma, p ,a ,b)$.

Now the major difficulty that must be overcome is to find a way to prove all the shift equations \eqref{shift_equa1}, \eqref{shift_equa2}, \eqref{shift_equa3} as well as the additional equation \eqref{relation_c2} for all real values of $p,a,b$ satisfying \eqref{bounds} and not just for positive integer $p$. To achieve this the key ingredient of our proof is to introduce the following two auxiliary functions for $t\le 0$,
\begin{equation}
U(t) :=   \mathbb{E}[ (\int_{0}^{1} (x-t)^{\frac{\gamma^2}{4}}  x^{a}(1-x)^{b} e^{\frac{\gamma}{2} X( x) } d x)^p ],
\end{equation}
and 
\begin{equation}
\tilde{U}(t) :=  \mathbb{E}[ (\int_{0}^{1} (x-t) \, x^{a}(1-x)^{b} \,e^{\frac{\gamma}{2} X( x) } d x)^p ],
\end{equation}
and to show using probabilistic techniques that the following holds: 
\begin{proposition}\label{BPZ}
For $\gamma \in (0,2) $, $a,b,p$ satisfying \eqref{bounds} and $t <0$, $U(t)$ is solution of the hypergeometric equation:
\begin{equation}\label{ODE U}
t(1-t)U''(t)+(C-(A+B+1)t)U'(t)-ABU(t)=0. 
\end{equation}
The parameters $A, B, C$ are given by:
\begin{equation}
A= -\frac{p\gamma^2}{4}, B=-(a+b+1)-(2-p)\frac{\gamma^2}{4}, C = -a-\frac{\gamma^2}{4}.
\end{equation}
Similarly $\tilde{U}(t)$ is solution of the hypergeometric equation but with parameters $\tilde{A}, \tilde{B}, \tilde{C}$ given by:
\begin{equation}
\tilde{A}= -p, \tilde{B}=-\frac{4}{\gamma^2}(a+b+2)+p-1, \tilde{C} = -\frac{4}{\gamma^2}(a+1).
\end{equation}
\end{proposition}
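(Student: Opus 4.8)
The plan is to recognise $U(t)$ as a GMC correlation function carrying a \emph{degenerate insertion} at the point $t$, and to obtain \eqref{ODE U} as the BPZ equation attached to that insertion. The first step is to rewrite the deterministic weight $(x-t)^{\gamma^2/4}$ as the effect of a field shift via the Cameron--Martin/Girsanov theorem. Since \eqref{covariance} gives $\mathbb{E}[X(x)X(t)]=2\ln\frac{1}{|x-t|}$, shifting $X$ by $\lambda$ times this kernel multiplies the density $e^{\frac{\gamma}{2}X(x)}$ by $|x-t|^{-\gamma\lambda}$, so the choice $\lambda=-\gamma/4$ reproduces exactly $(x-t)^{\gamma^2/4}$ for $t<0$. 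Hence, in a regularized sense,
\begin{equation*}
U(t)=\mathbb{E}\Big[e^{-\frac{\gamma}{4}X(t)-\frac{\gamma^2}{32}\mathbb{E}[X(t)^2]}\Big(\int_0^1 x^a(1-x)^b e^{\frac{\gamma}{2}X(x)}\,dx\Big)^p\Big],
\end{equation*}
which is a correlation function carrying a vertex operator of charge $-\gamma/2$ (the level-two degenerate field) at $t$, together with operators at $0$, $1$ and, through the $p$-th power, at infinity. This makes \eqref{ODE U} the natural statement: a four-point function with one degenerate insertion satisfies a second-order ODE which, once global conformal invariance has fixed the three other points, is hypergeometric.

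To make this rigorous I would work at finite regularization, replacing $X$ by a cutoff $X_\delta$ as in \eqref{def_GMC} and defining $U_\delta(t)$ accordingly, so that $t\mapsto U_\delta(t)$ is smooth and differentiation may be exchanged with $\mathbb{E}[\cdot]$. Writing $\mu(t)=\int_0^1 (x-t)^{\gamma^2/4}x^a(1-x)^b e^{\frac{\gamma}{2}X(x)}\,dx$, differentiating twice brings down the descendant weights $(x-t)^{\frac{\gamma^2}{4}-1}$ and $(x-t)^{\frac{\gamma^2}{4}-2}$ and produces the terms $\mathbb{E}[\mu^{p-1}\mu'']$, $\mathbb{E}[\mu^{p-2}(\mu')^2]$ and, after multiplication by the coefficients in \eqref{ODE U}, several single-insertion pieces. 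The only term not already of $U,U',U''$ type is the quadratic one $\mathbb{E}[\mu^{p-2}(\mu')^2]$, which carries two descendant insertions. The heart of the argument is then a Gaussian integration by parts in the field (contracting the two factors $e^{\frac{\gamma}{2}X}$, equivalently inserting $\partial_t X(t)$ and contracting against \eqref{covariance}), together with an integration by parts in the spatial variable $x$ using $\frac{d}{dx}(x-t)^{\gamma^2/4}=\frac{\gamma^2}{4}(x-t)^{\gamma^2/4-1}$, which re-express the quadratic and shifted-exponent terms, up to contact contributions, back in terms of $\mu$ with a single insertion of exponent exactly $\gamma^2/4$.

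The reason the computation closes into the hypergeometric form is that $-\gamma/2$ is a degenerate charge: the value $\gamma^2/4$ is precisely the one for which the second-order operator coming from the stress tensor annihilates the insertion, so the boundary and contact terms produced by the two integrations by parts combine into a homogeneous equation. The parameters are then fixed by the indicial exponents: $C=-a-\frac{\gamma^2}{4}$ controls the exponent at $t=0$, $C-A-B$ the exponent at $t=1$, and $\{A,B\}$ with $A=-\frac{p\gamma^2}{4}$, $B=-(a+b+1)-(2-p)\frac{\gamma^2}{4}$ the exponents at $t\to-\infty$; these match the conformal weights read off from $a,b,p$ and the degenerate field. The statement for $\tilde U(t)$ follows by the identical scheme with the \emph{dual} degenerate field: the weight $(x-t)^{1}$ corresponds via Girsanov to $\lambda=-1/\gamma$, i.e.\ to the charge $-2/\gamma$, so the same Gaussian-calculus computation in the dual variables (where the insertions at $0,1$ are naturally measured through $\frac{4}{\gamma^2}(1+a)$, $\frac{4}{\gamma^2}(1+b)$) yields the hypergeometric equation with $\tilde A=-p$, $\tilde B=-\frac{4}{\gamma^2}(a+b+2)+p-1$, $\tilde C=-\frac{4}{\gamma^2}(a+1)$.

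The hard part is the rigorous justification of these formal manipulations. Because $p$ is fractional one cannot expand the $p$-th power as a finite product and fall back on the Selberg computation \eqref{Selberg}; every estimate must be carried out directly on the moment \eqref{main_quantity}, using the sharp existence range \eqref{bounds} together with standard positive- and negative-moment GMC estimates to justify differentiating under $\mathbb{E}[\cdot]$, exchanging the limit $\delta\to 0$ with the $t$-derivatives, and controlling the descendant insertions $(x-t)^{\frac{\gamma^2}{4}-1}$ and $(x-t)^{\frac{\gamma^2}{4}-2}$, which become singular as $t\uparrow 0$. One must also verify that the contact terms generated by the Gaussian integration by parts genuinely vanish in the limit rather than contributing a spurious inhomogeneous term. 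As flagged in the statement of Proposition~\ref{BPZ}, pushing all of this down to $-1-\frac{\gamma^2}{4}<a\le -1$ is the most delicate point, since there $x^a$ is no longer Lebesgue-integrable and precise multifractal estimates on the GMC measure near $0$ are needed to keep $U(t)$ and its derivatives finite.
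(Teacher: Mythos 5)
Your outline follows the same route as the paper's proof of Proposition \ref{BPZ}: regularize both the field (a parameter $\delta$) and the interval of integration (a parameter $\epsilon$), differentiate under the expectation, use Girsanov to turn the factors brought down into weighted GMC insertions, integrate by parts in the spatial variable through $\partial_{x_1}(x_1-t)^{\gamma^2/4}=-\partial_t(x_1-t)^{\gamma^2/4}$, and control the boundary contributions; even your opening degenerate-field interpretation is the paper's stated motivation. The genuine gap is that the two steps carrying all the mathematical weight are asserted or deferred rather than proved. On the algebraic side, your claim that degeneracy of the charge $-\gamma/2$ makes the boundary and contact terms ``combine into a homogeneous equation'' is not what happens: at finite $\epsilon$ the combination is genuinely inhomogeneous. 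In the paper, the double-insertion terms (your $\mathbb{E}[\mu^{p-2}(\mu')^2]$) cancel only after symmetrizing in $(x_1,x_2)$ and exploiting the antisymmetry of the regularized kernel $1/(x_2-x_1)_{\delta}$, and one needs a \emph{third} integration-by-parts identity, expressing $-BU_{\epsilon}(t)$ itself through the boundary terms and single-insertion integrals, before the three relations can be assembled; the result is
\begin{equation*}
t(1-t)U_{\epsilon}''(t)+(C-(A+B+1)t)U_{\epsilon}'(t)-ABU_{\epsilon}(t)
=\epsilon(1-\epsilon)\frac{p\gamma^2}{4}\Big(\frac{E_{1,\epsilon}(t)}{1-t-\epsilon}+\frac{E_{0,\epsilon}(t)}{t-\epsilon}\Big),
\end{equation*}
with an explicit, nonzero error on the right-hand side.

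The second, decisive gap is the proof that this error vanishes as $\epsilon\to 0$, which you only flag as ``the hard part.'' This is where the bounds \eqref{bounds} enter quantitatively and where a new GMC estimate is required, not a citation of standard facts: the paper splits into $a>-1+\frac{\gamma^2}{4}$ (direct), $-1<a\le -1+\frac{\gamma^2}{4}$ (an exact scaling argument along $\epsilon_N=2^{-N}$ using sub-additivity of $x\mapsto x^{p-1}$, which closes precisely because $\frac{\gamma^2}{4}p-\frac{\gamma^2}{4}-a-1<0$ under \eqref{bounds}), and $-1-\frac{\gamma^2}{4}<a\le -1$, where one must bound negative moments of the GMC restricted to $[\epsilon,1/2]$ with the singular weight; that bound is Lemma \ref{lemma estimation GMC}, proved via the radial decomposition and drifted-Brownian-motion estimates, and nothing in your proposal supplies it. Finally, a point your plan misses entirely: since the boundary terms are only shown to converge to zero uniformly on compacts along a sequence of $\epsilon$'s, the ODE is first obtained in the sense of distributions and must then be upgraded to a classical (indeed analytic) solution by hypoellipticity of the hypergeometric operator; without this last step the statement of Proposition \ref{BPZ} as written is not reached.
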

Let us make a few comments on the meaning of $U(t)$ and $\tilde{U}(t)$. These auxiliary functions are very similar to the correlation functions of LCFT with a degenerate field insertion - see \cite{DOZZ1,DOZZ2} for the case of the sphere and \cite{remy} for the unit disk - which also obey differential equations known as the BPZ equations. What is mysterious in our present case is that it is not clear whether there exists an actual CFT where $U(t)$ and $\tilde{U}(t)$ correspond to correlations with degenerate insertions which would explain why the differential equations of Proposition \ref{BPZ} hold. Furthermore if we replace the real $t$ by a complex variable $t\in \mathbb{C}\backslash [0,\infty]$, it is not hard to see that $U(t)$ is a holomorphic function and Proposition \ref{BPZ} will hold if we replace the ordinary derivative by a complex derivative $\partial_t$. In the conformal bootstrap approach of CFT initiated by Belavin-Polyakov-Zamolodchikov in \cite{BPZ}, a correlation function with a degenerate insertion can be decomposed into combinations of the structure constants and of the conformal blocks. A conformal block is a locally holomorphic function and it is always accompanied by its complex conjugate in the decomposition. What is mysterious with $U(t)$ and $\tilde{U}(t)$ is that we only see the holomorphic part. At this stage we have no CFT-based explanation for this observation although a possible path could be to look at boundary LCFT with multiple boundary cosmological constants, see for instance \cite{nakayama}. On the other hand let us mention that again in the very special case where $ p \in \mathbb{N}$, $U(t)$ and $\tilde{U}(t)$ reduce to Selberg-type integrals and the equations of Proposition \ref{BPZ} were known in this case, see \cite{kaneko}.  

Proposition \ref{BPZ} will be established in section \ref{section DE} by performing direct computations on $U(t)$ and $\tilde{U}(t)$. We then write the solutions of the hypergeometric equations in two different bases. One solution corresponds to a power series expansion in $\vert t \vert$ and the other to an expansion in $ \vert t \vert^{-1}$. The change of basis formula \eqref{hpy1} written in appendix \ref{sec_special} given by the theory of hypergeometric equations then provides non-trivial relations which are precisely the shift equations that we wish to prove. This is performed in detail in section \ref{sec_shift} where Proposition \ref{prop_a} completely determines the dependence in $a$ and $b$ of $M(\gamma, p ,a ,b)$ and Proposition \ref{prop_p} establishes \eqref{shift_c1}. Thus we have proved Theorem \ref{main_result}.

\subsection{Tail expansion for GMC and the reflection coefficients}\label{section tail}

Before moving into the proof of our main result, we provide in this subsection and in the following some applications of Theorem \ref{main_result}. The first application we will consider deals with tail expansions for GMC measures, in other words the probability for a GMC measure to be large. We choose to include here a very general discussion about these tail expansions of GMC with an arbitrary insertion both in one and in two dimensions. For each tail expansion result there will appear a universal coefficient known as the reflection coefficient.

The first case that was studied is the tail expansion of a GMC in dimension two and a precise asymptotic was given in \cite{DOZZ2} in terms of the reflection coefficient $\overline{R}_2(\alpha)$,\footnote{In \cite{DOZZ2} or \cite{ReviewDOZZ} this coefficient is actually called $\overline{R}(\alpha)$ but for the needs of our discussion we introduce the $2$ to indicate the dimension. Furthermore the bar stands for the fact that it is the unit volume coefficient.} see Proposition \ref{tail_KRV} below.\footnote{$\overline{R}_2(\alpha)$ is the bulk reflection coefficient in dimension two, a boundary reflection coefficient $\overline{R}^{\partial}_2(\alpha)$ also exists but its value remains unknown, see the figure below.} Let us mention that it was recently discovered in \cite{ReviewDOZZ} that $\overline{R}_2(\alpha)$ corresponds to the partition function of the $\alpha$-quantum sphere introduced by Duplantier-Miller-Sheffield in \cite{Mating}. Now our exact formula on the unit interval will allow us to write a similar tail expansion for GMC in dimension one. Following \cite{Mating} we use the standard radial decomposition of the covariance \eqref{covariance} of $X$ around the point $0$, i.e. we write for $ s \geq 0 $,
\begin{equation}\label{radial}
X( e^{-s/2}) = B_s + Y( e^{-s/2}),
\end{equation}
where $B_s$ is a standard Brownian motion and $Y$ is an independent Gaussian process that can be defined on the whole plane with covariance given for $x, y \in \mathbb{C}$ by:
\begin{equation}
\mathbb{E}[ Y(x) Y(y)] = 2 \ln \frac {|x| \vee |y|}{\vert x -y \vert}.
\end{equation}

Motivated by the Williams decomposition of Theorem \ref{theo Williams}, we introduce for $\lambda>0$ the process that will be used in the definitions below,
\begin{equation}\label{mathcal B}
\mathcal{B}^{\lambda}_s := 
\begin{cases}
\hat{B}_s-\lambda s  \quad s\ge 0\\
\bar{B}_{-s}+\lambda s \quad s<0,
\end{cases}
\end{equation}
where $(\hat{B}_s-\lambda s)_{s\ge 0}$ and $(\bar{B}_{s}-\lambda s)_{s\ge 0}$ are two independent Brownian motions with negative drift conditioned to stay negative.
We can now give the definitions of the two coefficients in dimension one $ \overline{R}^{\partial}_{1}(\alpha)$ and $\overline{R}_{1}(\alpha)$ along with the associated GMC measures with insertion $I^{\partial}_{1, \eta}( \alpha )$ and $I_{1, \eta}( \alpha )$ whose tail behavior will be governed by the corresponding coefficient:
\begin{small}
\begin{align*}
 \overline{R}^{\partial}_{1}(\alpha) &:=\E [(\frac{1}{2} \int_{-\infty}^{\infty} e^{ \frac{\gamma}{2} \mathcal{B}_s^{\frac{Q-\alpha}{2}} } e^{\frac{\gamma}{2} Y(e^{-s/2})  } ds)^{\frac{2}{\gamma}(Q - \alpha)}  ], \\
\overline{R}_{1}(\alpha) &:= \E [(\frac{1}{2} \int_{-\infty}^{\infty} e^{ \frac{\gamma}{2} \mathcal{B}_s^{\frac{Q-\alpha}{2}} } (e^{\frac{\gamma}{2} Y(e^{-s/2})  } + e^{\frac{\gamma}{2} Y(-e^{-s/2})  } ) ds)^{\frac{2}{\gamma}(Q -\alpha)}],\\
 I^{\partial}_{1, \eta}( \alpha ) &:= \int_0^{\eta} x^{-\frac{\gamma \alpha}{2}} e^{\frac{\gamma}{2} X(x)} dx,   \\
 I_{1, \eta}( \alpha ) &:= \int_{v -\eta}^{v + \eta} \vert x -v \vert^{-\frac{\gamma \alpha}{2}} e^{\frac{\gamma}{2} X(x)} dx.
\end{align*}
\end{small}
Let us make some comments on these definitions. Here $\alpha \in (\frac{\gamma}{2}, Q)$, $Q = \frac{\gamma}{2} + \frac{2}{\gamma}$, and $\eta$ is an arbitrary positive real number chosen small enough. To match the conventions of the study of LCFT we have written the fractional power $x^{-\frac{\gamma \alpha}{2}}$, so in these notations we have $a = - \frac{\gamma \alpha}{2}$. Notice that the difference between $I^{\partial}_{1, \eta}( \alpha )$ and $I_{1, \eta}( \alpha ) $ lies in the position of the insertion. For $I^{\partial}_{1, \eta}( \alpha )$ the insertion is placed in $0$ (by symmetry we could have placed it in $1$). Our Theorem \ref{main_result} will give us the value of the associated coefficient $\overline{R}^{\partial}_{1}(\alpha)$. The other case corresponds to placing the insertion at a point $v$ inside the interval, $v \in (0,1)$, and gives the quantity $I_{1, \eta}( \alpha )$. The computation of the associated $\overline{R}_{1}(\alpha)$ will be done in a future work. We now claim:
\begin{proposition}\label{proposition reflection}
For $\alpha \in ( \frac{\gamma}{2}, Q) $ we have the following tail expansion for $I_{1, \eta}^{\partial}(\alpha)$ as $u \rightarrow \infty$ and for some $\nu >0$,
\begin{equation}\label{tail_result1}
\mathbb{P} ( I_{1, \eta}^{\partial}(\alpha) >u )  = \frac{ \overline{R}_1^{\partial}(\alpha)}{u^{ \frac{2}{\gamma}(Q - \alpha) }} + O ( \frac{1}{u^{ \frac{2}{\gamma}(Q - \alpha)+ \nu }  }   ), 
\end{equation}
where the value of $  \overline{R}_1^{\partial}(\alpha)$ is given by:
\begin{equation}\label{tail_result2}
 \overline{R}_1^{\partial}(\alpha) =  \frac{ (2 \pi)^{ \frac{2}{\gamma}(Q -\alpha ) -\frac{1}{2}} (\frac{2}{\gamma})^{ \frac{\gamma}{2}(Q -\alpha ) -\frac{1}{2} }  }{(Q-\alpha) \Gamma(1 -\frac{\gamma^2}{4}  )^{ \frac{2}{\gamma}(Q -\alpha ) } } \frac{ \Gamma_{\frac{\gamma}{2}}(\alpha - \frac{\gamma}{2}  )}{\Gamma_{\frac{\gamma}{2}}(Q- \alpha )}.
\end{equation}
\end{proposition}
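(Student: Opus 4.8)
The plan is to exploit the standard dictionary between the power-law tail of a positive random variable and the polar structure of its Mellin transform: if $\mathbb{P}(I>u)\sim \overline{R}\, u^{-\sigma}$ then $p\mapsto\mathbb{E}[I^p]$ extends meromorphically across $p=\sigma$ with $\mathrm{Res}_{p=\sigma}\mathbb{E}[I^p]=-\sigma\,\overline{R}$, and then to read off both the exponent and the residue from Theorem \ref{main_result}. For $I=I_{1,\eta}^\partial(\alpha)$ the insertion corresponds to $a=-\tfrac{\gamma\alpha}{2}$, and the critical exponent is $\sigma:=\tfrac{2}{\gamma}(Q-\alpha)$, which is precisely the threshold $1+\tfrac{4}{\gamma^2}(1+a)$ beyond which the moments in \eqref{bounds} cease to exist. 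This coincidence is the signature that the heavy tail is produced by GMC mass accumulating near the insertion point $0$, and it is exactly what lets Theorem \ref{main_result} fix the constant.

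First I would establish the tail expansion \eqref{tail_result1} itself, identifying the coefficient with the Williams-decomposition quantity $\overline{R}_1^\partial(\alpha)$ defined above the statement. Using the radial decomposition \eqref{radial}, $X(e^{-s/2})=B_s+Y(e^{-s/2})$, and the change of variables $x=e^{-s/2}$, the event $\{I_{1,\eta}^\partial(\alpha)>u\}$ is realized by the Brownian component $B$ attaining an unusually high maximum, the relevant negative drift $\tfrac{Q-\alpha}{2}$ being encoded in the process $\mathcal{B}^{\frac{Q-\alpha}{2}}_s$ of \eqref{mathcal B}. Conditioning on the maximum of this drifted path and invoking the Williams path decomposition (Theorem \ref{theo Williams}) together with a Girsanov tilt extracts the leading asymptotics, the constant being exactly the expectation over $\mathcal{B}^{\frac{Q-\alpha}{2}}$ and the independent field $Y$ that defines $\overline{R}_1^\partial(\alpha)$; controlling the subleading excursions yields the remainder $O(u^{-\sigma-\nu})$. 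This follows the scheme of \cite{DOZZ2} on the sphere and \cite{remy} on the disk, adapted to the one-sided boundary insertion. \textbf{This is the step I expect to be the main obstacle}, since obtaining the sharp constant \emph{and} a quantitative error $O(u^{-\sigma-\nu})$ requires delicate control of the Brownian excursion near its maximum together with uniform integrability of the GMC mass carried by $Y$.

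Granting the expansion for $I_{1,\eta}^\partial(\alpha)$, I would next reduce the computation of the constant to a residue of $M(\gamma,p,a,b)$. The identity $\mathbb{E}[I^p]=p\int_0^\infty u^{p-1}\mathbb{P}(I>u)\,du$ shows that $\mathbb{E}[(I_{1,\eta}^\partial(\alpha))^p]$ has a simple pole at $p=\sigma$ with residue $-\sigma\,\overline{R}_1^\partial(\alpha)$, the error $O(u^{-\sigma-\nu})$ guaranteeing that no further singularity interferes before $\sigma+\nu$. To connect this to the explicit formula I would use the exact scaling of the log-correlated field: since $\mathbb{E}[X(\eta x)X(\eta y)]=2\ln\tfrac1\eta+2\ln\tfrac1{|x-y|}$, the substitution $x=\eta y$ gives $I_{1,\eta}^\partial(\alpha)\overset{d}{=}\eta^{1+\frac{\gamma^2}{4}-\frac{\gamma\alpha}{2}}\,e^{\frac{\gamma}{2}\Omega}\int_0^1 y^{-\frac{\gamma\alpha}{2}}e^{\frac{\gamma}{2}X(y)}\,dy$ with $\Omega\sim\mathcal{N}(0,2\ln\tfrac1\eta)$ independent. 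The algebraic identity $1+\tfrac{\gamma^2}{4}-\tfrac{\gamma\alpha}{2}=\tfrac{\gamma^2}{4}\sigma$ makes the $\eta$-dependent prefactor cancel after taking the $\sigma$-th moment, so the coefficient is independent of $\eta$ and equals that of the $\eta=1$ integral, whose moments are exactly $M(\gamma,p,-\tfrac{\gamma\alpha}{2},0)$. Since the explicit formula and the probabilistic moment agree on an open $p$-interval, their meromorphic continuations coincide, whence $\overline{R}_1^\partial(\alpha)=-\tfrac1\sigma\,\mathrm{Res}_{p=\sigma}M(\gamma,p,-\tfrac{\gamma\alpha}{2},0)$.

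Finally I would extract this residue from Theorem \ref{main_result}. With $a=-\tfrac{\gamma\alpha}{2}$ and $b=0$ the only factor singular at $p=\sigma$ is $\Gamma_{\frac{\gamma}{2}}\!\bigl(\tfrac2\gamma(a+1)-(p-1)\tfrac\gamma2\bigr)=\Gamma_{\frac{\gamma}{2}}\!\bigl(Q-\alpha-p\tfrac\gamma2\bigr)$, which meets the simple pole of $\Gamma_{\frac{\gamma}{2}}$ at $0$ as $p\to\sigma$; the residue of $\Gamma_{\frac{\gamma}{2}}$ at the origin is read off from the shift relation $\Gamma_{\frac{\gamma}{2}}(x)/\Gamma_{\frac{\gamma}{2}}(x+\tfrac\gamma2)=\tfrac{1}{\sqrt{2\pi}}\Gamma(\tfrac{\gamma x}{2})(\tfrac\gamma2)^{-\frac{\gamma x}{2}+\frac12}$ using $\Gamma(z)\sim z^{-1}$ near $0$. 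Evaluating every remaining factor at $p=\sigma$, the double-Gamma terms collapse: several of them equal $\Gamma_{\frac{\gamma}{2}}(\alpha)$ or $\Gamma_{\frac{\gamma}{2}}(Q)$ and cancel between numerator and denominator, leaving $\Gamma_{\frac{\gamma}{2}}(\alpha-\tfrac\gamma2)$ upstairs and $\Gamma_{\frac{\gamma}{2}}(Q-\alpha)$ downstairs. Using $\Gamma_{\frac{\gamma}{2}}(\tfrac\gamma2)/\Gamma_{\frac{\gamma}{2}}(\tfrac2\gamma)=\tfrac\gamma2$ and simplifying the powers of $2\pi$, $\tfrac2\gamma$ and $\Gamma(1-\tfrac{\gamma^2}{4})$ via $\sigma=\tfrac2\gamma(Q-\alpha)$, one recovers \eqref{tail_result2}. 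This last step is purely computational once the residue of the double Gamma function is in hand.
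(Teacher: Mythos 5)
Your proposal is correct and follows essentially the same route as the paper: the paper also (i) proves the tail expansion \eqref{tail_result1} by adapting the Williams-decomposition argument of \cite{DOZZ2} to the boundary insertion, and (ii) identifies the constant through the relation $\lim_{\epsilon \to 0}\epsilon\,\mathbb{E}[I_{1,1}^{\partial}(\alpha)^{p-\epsilon}] = p\,\overline{R}_1^{\partial}(\alpha)$ at $p=\tfrac{2}{\gamma}(Q-\alpha)$, which is exactly your Mellin-residue dictionary, then extracts the residue of the singular factor $\Gamma_{\frac{\gamma}{2}}\bigl(Q-\alpha-p\tfrac{\gamma}{2}\bigr)$ in Theorem \ref{main_result} via the shift equations. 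Your only (harmless) deviations are cosmetic: you handle the $\eta$-independence by the exact scaling relation of the field rather than by the paper's moment bound \eqref{equation 442}, and you use the $\tfrac{\gamma}{2}$-shift relation plus $\Gamma_{\frac{\gamma}{2}}(\tfrac{\gamma}{2})/\Gamma_{\frac{\gamma}{2}}(\tfrac{2}{\gamma})=\tfrac{\gamma}{2}$ where the paper uses the $\tfrac{2}{\gamma}$-shift relation directly.
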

The proof of this proposition is done in appendix \ref{sec_reflection}. Notice that we impose the condition $ \alpha \in (\frac{\gamma}{2}, Q)$. This is crucial for the tail behavior of $I^{\partial}_{1, \eta}( \alpha )$ (or similarly for $I_{1, \eta}( \alpha )$) to be dominated by the insertion and this is precisely why the asymptotic expansion is independent of the choice of $\eta$. It also explains why the radial decomposition \eqref{radial} is natural as it is well suited to study $X$ around a particular point. If one is interested in the case where $\alpha < \frac{\gamma}{2}$ (or simply $\alpha =0$), a different argument known as the localization trick is required to obtain the tail expansion, see \cite{tail} for more details.
 For the sake of completeness of our discussion we also recall the tail expansion in dimension two that was obtained in \cite{DOZZ2}. The normalizations in this case are slightly different as we do not include a factor $2$ in the covariance. We work with a Gaussian process $\tilde{X}$ defined on the unit disk $\mathbb{D}$ with covariance $\ln \frac{1}{\vert x -y\vert}$. Instead of $Y$ we use $\tilde{Y}$ with covariance:
\begin{equation}
\mathbb{E}[ \tilde{Y}(x) \tilde{Y}(y)] = \ln \frac {|x| \vee |y|}{\vert x -y \vert}.
\end{equation}
For an insertion placed in $z$, $\vert z \vert <1$ we now define,
\begin{align*}
\overline{R}_{2}(\alpha) &:= \E[( \int_{-\infty}^{\infty} e^{ \gamma \mathcal{B}_s^{Q-\alpha} } \int_0^{2 \pi} e^{\gamma \tilde{Y}(e^{-s} e^{i \theta} )  } ds)^{\frac{2}{\gamma}(Q-\alpha)}], \\
  I_{2,\eta}( \alpha ) &:= \int_{B(z,\eta)} \vert x - z  \vert^{-\gamma \alpha} e^{\gamma \tilde{X}(x)} d^2x,
\end{align*}
and we state the result obtained in \cite{DOZZ2}:
\begin{proposition}\label{tail_KRV} (Kupiainen-Rhodes-Vargas \cite{DOZZ2})
For $\alpha \in ( \frac{\gamma}{2}, Q) $ we have the following tail expansion for $I_{2,\eta}(\alpha)$ as $u \rightarrow \infty$ and for some $\nu >0$,
\begin{equation}
\mathbb{P} ( I_{2,\eta}(\alpha) >u )  = \frac{ \overline{R}_2(\alpha)}{u^{ \frac{2}{\gamma}(Q - \alpha) }} + O ( \frac{1}{u^{ \frac{2}{\gamma}(Q - \alpha)+ \nu }  }   ), 
\end{equation}
where the value of $  \overline{R}_2(\alpha)$ is given by:
\begin{equation}
 \overline{R}_2(\alpha) = - \frac{\gamma}{2(Q-\alpha)} \frac{(\pi \Gamma( \frac{\gamma^2}{4} ))^{\frac{2}{\gamma}(Q -\alpha ) } }{ \Gamma(1 - \frac{\gamma^2}{4} )^{\frac{2}{\gamma}(Q -\alpha ) }} \frac{\Gamma(-\frac{\gamma}{2}(Q -\alpha ) ) }{ \Gamma(\frac{\gamma}{2}(Q -\alpha )) \Gamma(\frac{2}{\gamma}(Q -\alpha ))   }.
\end{equation}
\end{proposition}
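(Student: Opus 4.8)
The plan is to localize the event $\{I_{2,\eta}(\alpha)>u\}$ at the insertion point $z$ and reduce it to the large-deviation behaviour of an exponential functional of a Brownian motion with negative drift. First I would apply the radial decomposition of $\tilde{X}$ around $z$, writing $\tilde{X}(z+e^{-s}e^{i\theta}) = B_s + \tilde{Y}(z+e^{-s}e^{i\theta})$ with $B_s$ a standard Brownian motion encoding the circle averages and $\tilde{Y}$ the independent lateral field. Passing to radial coordinates $s=-\ln|x-z|$, using $d^2x = e^{-2s}ds\,d\theta$, and absorbing the insertion weight $e^{\gamma\alpha s}$ together with the Wick normalization $e^{-\frac{\gamma^2}{2}s}$, one finds that the deterministic part of the $s$-exponent is $\gamma\alpha-\frac{\gamma^2}{2}-2 = -\gamma(Q-\alpha)$. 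Hence, up to the lateral factor, $I_{2,\eta}(\alpha)$ is comparable to $\int_0^\infty e^{\gamma(B_s-(Q-\alpha)s)}\big(\int_0^{2\pi} e^{\gamma\tilde{Y}}d\theta\big)ds$, an exponential functional driven by the negatively drifted process $B_s-(Q-\alpha)s$. Since $\alpha<Q$ this process drifts to $-\infty$, so the functional is finite almost surely and its large values are produced by the excursion of $B_s-(Q-\alpha)s$ to a high level $\approx\frac{1}{\gamma}\ln u$.

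The heart of the argument is then a Williams path decomposition of $B_s-(Q-\alpha)s$ at its global maximum, which is exactly Theorem \ref{theo Williams}. Conditioning on the maximum being at height $m$ splits the trajectory into an ascending and a descending piece which, after recentring, become two independent drifted Brownian motions conditioned to stay negative, i.e. the two-sided process $\mathcal{B}^{Q-\alpha}_s$ of \eqref{mathcal B}. Using $\mathbb{P}(\sup_s(B_s-(Q-\alpha)s)>m)=e^{-2(Q-\alpha)m}$ and setting $e^{\gamma m}\approx u$ produces the power law $u^{-\frac{2}{\gamma}(Q-\alpha)}$, and the coefficient multiplying it, obtained by integrating the recentred path against the lateral noise $\tilde{Y}$ and raising to the power $\frac{2}{\gamma}(Q-\alpha)$, is precisely the probabilistic reflection coefficient $\overline{R}_2(\alpha)$ defined above. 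The subleading remainder $O(u^{-\frac{2}{\gamma}(Q-\alpha)+\nu})$ comes from quantifying, for a small $\nu>0$, the error in replacing the true functional by its value near the maximum and in decoupling the two excursions; this needs uniform integrability estimates on the GMC mass below the maximal level and is the more technical part of this step.

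With the probabilistic form of $\overline{R}_2(\alpha)$ in hand, the remaining and genuinely hard step is to evaluate it explicitly in terms of $\Gamma$-functions. This cannot be done by the Brownian computation alone and requires an exact input from integrability, following the same route as the one-dimensional computation carried out in appendix \ref{sec_reflection} via Theorem \ref{main_result}. The idea is to view the fractional moment $\E[(\,\mathrm{GMC\ mass}\,)^p]$ with a single insertion $\alpha$ on the sphere as an analytic function of $p$, to locate the pole at the critical exponent $p=\frac{2}{\gamma}(Q-\alpha)$ that governs the tail, and to identify $\overline{R}_2(\alpha)$ with the corresponding residue: the universal prefactor $-\frac{\gamma}{2(Q-\alpha)}$ is exactly $-1/p$ at this pole, reflecting the elementary relation between a power-law tail constant and the residue of its Mellin transform. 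Feeding in the DOZZ formula of \cite{DOZZ1,DOZZ2} for this moment and simplifying the resulting ratio of $\Gamma$-factors yields the stated value, with the powers $(\pi\Gamma(\frac{\gamma^2}{4}))^{\frac{2}{\gamma}(Q-\alpha)}\Gamma(1-\frac{\gamma^2}{4})^{-\frac{2}{\gamma}(Q-\alpha)}$ arising from the cosmological-constant normalization. The main obstacle is precisely this identification: establishing rigorously that the probabilistic $\overline{R}_2(\alpha)$ coincides with the DOZZ residue, and not merely shares its pole structure, is the delicate point where the full strength of the exact formula is needed.
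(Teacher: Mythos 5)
First, a point of comparison: the paper does not prove Proposition \ref{tail_KRV} at all --- it is quoted from \cite{DOZZ2} ``for the sake of completeness,'' and the only tail expansion the paper actually proves is the one-dimensional boundary analogue, Proposition \ref{proposition reflection}, in appendix \ref{sec_reflection}. Your first two paragraphs reproduce, correctly and in the right order, exactly the strategy of that appendix and of \cite{DOZZ2}: radial decomposition of the field around the insertion into a circle-average Brownian motion plus an independent lateral field, the drift bookkeeping $\gamma\alpha-\frac{\gamma^2}{2}-2=-\gamma(Q-\alpha)$, Williams' decomposition (Theorem \ref{theo Williams}) at the global maximum $M$ with $\mathbb{P}(e^{\gamma M}>v)=v^{-\frac{2}{\gamma}(Q-\alpha)}$, and the sandwich $\mathbb{E}[\min\{\rho/u,\cdot\}^{\frac{2}{\gamma}(Q-\alpha)}]$ producing the power law with the probabilistic coefficient $\overline{R}_2(\alpha)$. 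The technical content you defer (decoupling the region near the insertion from the rest via Kahane's inequality, negative-moment bounds in the spirit of Lemma \ref{lemma estimation GMC}, truncation $\rho_A\to\rho$) is precisely what occupies the appendix in the 1d case; also note a sign slip: your remainder should be $O(u^{-\frac{2}{\gamma}(Q-\alpha)-\nu})$, not $O(u^{-\frac{2}{\gamma}(Q-\alpha)+\nu})$.

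The genuine gap is in your third step. In one dimension the identification $\lim_{\epsilon\to 0}\epsilon\,\mathbb{E}[I^{p-\epsilon}]=p\,\overline{R}_1^{\partial}(\alpha)$ at $p=\frac{2}{\gamma}(Q-\alpha)$ works because Theorem \ref{main_result} provides a closed formula for exactly that moment (take $a=-\frac{\gamma\alpha}{2}$, $b=0$), and the residue is read off from $\lim_{\epsilon\to 0}\epsilon\,\Gamma_{\frac{\gamma}{2}}(\frac{\gamma\epsilon}{2})$. In two dimensions there is no closed formula for $\mathbb{E}[I_{2,\eta}(\alpha)^{p}]$: the DOZZ formula is a statement about the three-point function on the sphere, i.e.\ about moments with three insertions, so ``feeding in the DOZZ formula'' for the single-insertion localized mass is not a substitution you can actually perform. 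To extract $\overline{R}_2(\alpha)$ from DOZZ one needs a fusion asymptotic --- the 2d analogue of Lemma \ref{lemma fusion} --- relating a merging limit of the three-point function to the localized tail coefficient; and in \cite{DOZZ2} the logic in fact runs the other way: the explicit value of the reflection coefficient is pinned down by shift equations in $\alpha$ coming from the BPZ equations (the reflection relation in the degenerate OPE), \emph{before} DOZZ is fully assembled, and the residue identity you invoke is a consequence rather than an input. So the point you flag as ``delicate'' is not merely delicate: as written, your mechanism presupposes an exact Mellin transform that does not exist independently of the reflection coefficient itself. A correct completion would either import the shift-equation determination of $R(\alpha)$ from \cite{DOZZ2}, or prove a 2d fusion lemma and take residues of DOZZ in one of its insertion weights.
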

A similar proposition is also expected for $  \overline{R}_2^{\partial}(\alpha)$, the boundary reflection coefficient in dimension two, whose expression and computation are left for a future paper. One notices that $ \overline{R}_1^{\partial}(\alpha)$ has a more convoluted expression than $ \overline{R}_2(\alpha) $ as the special function $\Gamma_{\frac{\gamma}{2}}$ appears in its expression. Such expressions have already appeared in the study of Liouville theory for instance in \cite{Ponsot} where a general formula for the reflection amplitude is given. We now summarize the four different cases that we have discussed in the following figure. For each coefficient the number $1$ or $2$ stands for the dimension and the partial $\partial$ symbol stands for the boundary cases, no $\partial$ corresponds to the bulk cases. 

\begin{figure}[!htp]
\centering
\includegraphics[width=0.7\linewidth]{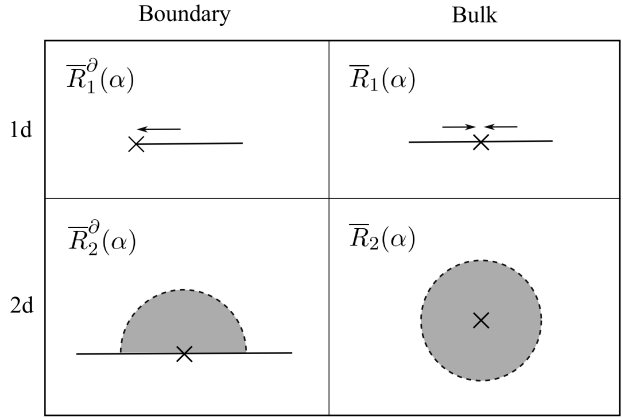}
\caption{Four types of reflection coefficients}
\end{figure}

\subsection{Small deviations for GMC}\label{sec_small_dev}

We now turn to the problem of determining the universal behavior of the probability for a GMC to be small. Both the exact formulas of Theorem \ref{main_result} and the one proven on the unit circle in \cite{remy} will provide crucial insight. For this subsection only we will use the following shorthand notation:
\begin{equation}
I_{\gamma,a,b} := \int_{0}^{1}  x^{a}(1-x)^{b} e^{\frac{\gamma}{2} X( x) } d x.
\end{equation}

In the following we will rely extensively on the decomposition $$ I_{\gamma,a,b} = \tilde{c} L Y_{\gamma} X_1 X_2 X_3$$ coming from Corollary \ref{law_ostro} with $\tilde{c}$ being a positive constant. First $L$ is a log-normal law, so one has $ \mathbb{P}(L \leq \epsilon) \leq c_1 \exp( -c_2 (\ln \epsilon)^2 ) $ for some $c_1, c_2 > 0$. On the other hand the probability for $Y_{\gamma}$ to be small is much smaller since $\mathbb{P}(Y_{\gamma} \leq \epsilon ) \leq \exp( -c \epsilon^{-\frac{4}{\gamma^2}}) $ for some $c>0$. From the above and since $X_1, X_2, X_3 \geq 1$ the probability to be small for $I_{\gamma,a,b}$ will be of log-normal type. By comparison in the case of the total mass of the GMC on the unit circle it was shown in \cite{remy} that it is distributed according to $Y_{\gamma}$ and so its probability to be small is of order $\exp( -c \epsilon^{-\frac{4}{\gamma^2}}) $.

Thus it appears that GMC on the unit interval and the unit circle have completely different small deviations. However this difference comes from the fact that the log-correlated field on the circle is of average zero while in the case of the interval there is a non-zero global mode producing the log-normal variable $L$.  Therefore on the interval if one subtracts the average of $X$ with respect to the correct measure (see below) one can remove the log-normal law $L$ appearing in the decomposition of Corollary \ref{law_ostro}. The probability for the resulting GMC to be small will then be bounded by $\exp(- c \epsilon^{ - \frac{4}{\gamma^2}}) $ for some $c>0$ just like for the case of the circle. We expect this to be the correct universal behavior.

Let us make the above more precise. We start by writing down the decomposition of the covariance of our field in terms of the Chebyshev polynomials. For all $x,y \in [0,1]$ with $x \neq y$ we have:
\begin{equation}
- 2 \ln | x -y | = 4 \ln 2 + \sum_{n=1}^{+ \infty} \frac{4}{n} T_n(2x -1) T_n(2y -1).
\end{equation}
We recall that the Chebyshev polynomial of order $n$ is the unique polynomial verifying $T_n(\cos \theta) = \cos (n \theta) $. This basis of polynomials is also orthogonal with respect to dot product given by the integration against $\frac{1}{\sqrt{1 -x^2}} dx $, i.e.
\begin{equation}
\int_{-1}^1 T_n(x) T_m(x) \frac{1}{\sqrt{1 -x^2}} dx =  \left\{ \begin{array}{lcl} 0 & \text{for}  & n \neq m \\ \pi & \text{for}  &  n=m=0 \\ \frac{\pi}{2}  & \text{for}  &  n=m \neq 0 \end{array} \right.   \
\end{equation} 
From the above our field $X(x)$ can be constructed by the series:
\begin{equation}\label{tche}
X(x) = 2 \sqrt{\ln 2} \alpha_0 + \sum_{n=1}^{+ \infty} \frac{2 \alpha_n}{\sqrt{n}} T_n(2x-1).
\end{equation}
Here $(\alpha_n)_{n \in \mathbb{N}}$ is a sequence of i.i.d. standard Gaussians. This of course only makes sense if one integrates both sides against a test function. We now introduce:
$$ \overline{X} := \frac{2}{\pi}\int_{0}^1  \frac{1}{\sqrt{1 -(2x-1)^2}} X(x) dx = 2 \sqrt{\ln 2} \alpha_0 \quad  \text{and} \quad X_{\perp}(x) := X(x) - \overline{X}.$$
We easily check that  $e^{\frac{\gamma}{2} \overline{X}} \overset{\text{law}}{=} \exp({\mathcal{N}(\gamma^2 \ln 2)})  $. The probability to be small for the GMC associated to $  X_{\perp}(x) $ is now given by,
\begin{equation}\label{small_dev}
\mathbb{P}( \int_0^{1} e^{\frac{\gamma}{2} X_{\perp}(x) } d x \leq \epsilon ) \le  \exp(-c \epsilon^{-\frac{4}{\gamma^2}}).
\end{equation}
This result can be easily obtained from Corollary \ref{law_ostro} by noticing that since we removed $L = \exp({\mathcal{N}(\gamma^2 \ln 2)})$ the probability to be small is now governed by $Y_{\gamma}$ which gives the bound written above. The argument we have just described is expected to work for any GMC in any dimension, a result of this nature can be found in \cite{lacoin}.

There is also a direct application of these observations to determining the law of the random variable $I_{\gamma,a,b}$. This is linked to how the strategy of the proof of the present paper differs from the one used in \cite{remy} to prove the Fyodorov-Bouchaud formula. In subsection \ref{subsection shift p} we first use the differential equation \eqref{ODE U} on $U(t)$ to obtain a relation between $M(\gamma, p, a,b ) $ and  $M(\gamma, p-1, a,b ) $. Thus from this relation and knowing that $M(\gamma, 0, a,b ) =1 $ one can compute recursively all the negative moments of the random variable $I_{\gamma,a,b}$. As it was emphasized in many papers (see the review \cite{Ostro_review} by Ostrovsky and references therein), the negative moments of $I_{\gamma,a,b}$ do not determine its law as the growth of the negative moments is too fast. This is why we must derive a second relation between $M(\gamma, p, a,b ) $ and  $M(\gamma, p-\frac{4}{\gamma^2}, a,b ) $ which gives enough information to complete the proof. By contrast in the case of the total mass of the GMC on the unit circle the negative moments do capture uniquely the probability distribution and so the proof of the Fyodorov-Bouchaud formula given in \cite{remy} only requires one shift equation (in a similar fashion one obtains a relation between the moment $p$ and the moment $p-1$ of the total mass of the GMC).

But the negative moments of $I_{\gamma,a,b}$ do not determine its law only because of the log-normal law $L$ in the decomposition of Corollary \ref{law_ostro}. By using Corollary \ref{law_ostro} and by independence of $ X_{\perp}(x) $ and  $ \overline{X}$ one can factor out  $ e^{\frac{\gamma}{2} \overline{X}} \overset{\text{law}}{=} L$ and the computation of the negative moments is now sufficient to uniquely determine the distribution. Thus the negative moments of a GMC measure always determine its law if one removes the global Gaussian coming from the average of the field with respect to an appropriate measure. From this observation the relation between $M(\gamma, p, a,b ) $ and  $M(\gamma, p-\frac{4}{\gamma^2}, a,b ) $ could be omitted in the proof of Theorem \ref{main_result}. Nonetheless if one only computes the negative moments it is not clear that the analytic continuation given by the $\Gamma_{\gamma}$ functions does correspond to the fractional moments of a random variable, this fact has been checked by Ostrovsky in \cite{Ostro1}. Thus in order to keep the proof of our theorem self-contained we choose to keep both shift equations.

\subsection{Other applications}\label{sec_app}
Similarly as in \cite{remy} we will write the applications of our Theorem \ref{main_result} to the behavior of the maximum of $X$ and to random matrix theory. We refer to \cite{remy} for more detailed explanations and for additional references on these problems.

Characterizing the behavior of the maximum of $X$ requires to compute the law of the total mass of the derivative martingale,
\begin{align*}
M' &= -\frac{1}{2} \int_0^1 X(x) e^{X(x)} dx\\
 &:= -\frac{1}{2}  \lim_{ \delta \rightarrow 0 } \int_0^1 ( X_{\delta}(x) - \mathbb{E}[X_{\delta}(x)^2])e^{X_{\delta}(x) - \frac{1}{2}  \mathbb{E}[X_{\delta}(x)^2]  }dx,
\end{align*}
which following \cite{APS} can be characterized by the convergence in law:
\begin{equation}
2 M' = \lim_{ \gamma \rightarrow 2 } \frac{1}{2 - \gamma} \int_0^1 e^{\frac{\gamma}{2} X(x) } dx.
\end{equation}
Therefore from our Theorem \ref{main_result} we can easily compute the moments of this quantity,
\begin{align*}
\mathbb{E}[(2 M')^p] &=  (2\pi)^{p} \frac{ \Gamma_1(1-p) \Gamma_1(2 -p)^2 \Gamma_1(4 - p) }{ \Gamma_1(2)^2 \Gamma_1(4 -2p) }\\
 &= \frac{  G(4 -2p) }{ G(1-p) G(2 -p)^2 G(4 - p) },
\end{align*}
where $G(x)$ is the so-called Barnes G function, see appendix \ref{sec_special} for more details. Just like in Corollary \ref{law_ostro} an explicit description of the resulting law has been found in \cite{Ostro2},
\begin{equation}
2 M' \overset{law}{=} \frac{\pi}{32} \tilde{L} \tilde{X}_1 \tilde{X}_2 \tilde{X}_3,
\end{equation}
where $ \tilde{L}, \tilde{X}_1, \tilde{X}_2, \tilde{X}_3$ are four independent random variables on $\mathbb{R}_{+}$ with the following laws:
\begin{align*}
\tilde{L} &= \exp( \mathcal{N}(0, 4 \ln 2)) \\
\tilde{X}_1 &= \frac{1}{y^2} e^{-1/y}, \: y > 0\\
\tilde{X}_2 &= \beta^{-1}_{2,2}(1,1;2,\frac{1}{2}, \frac{1}{2}) \\
\tilde{X}_3 &= \frac{2}{y^3} dy, \: y > 1.
\end{align*}
Then for a suitable regularization $X_{\delta}$ of $X$ the following convergence holds in law:
\begin{align*}
\max_{x \in [0,1]} X_{\delta}(x) - &2 \ln \frac{1}{\delta} + \frac{3}{2}\ln \ln \frac{1}{\delta} \\
 \underset{ \delta \rightarrow 0}{\rightarrow} \: &\mathcal{G}_1 + \ln M' + c \\
= \:\:\: &\mathcal{G}_1 + \mathcal{G}_2 + \mathcal{N}(0, 4 \ln 2) + \ln \tilde{X}_2 + \ln \tilde{X}_3  + c.
\end{align*}
All the random variables appearing above are independent, $\mathcal{G}_1$ and $\mathcal{G}_2$ are two independent Gumbel laws, and $c$ is a non-universal real constant that depends on the regularization procedure. We have also used the fact that $\ln \tilde{X}_1 \overset{law}{=} \mathcal{G}_2$.  

Lastly we briefly mention that in the case of the interval it is also possible to see the GMC measure as the limit of the characteristic polynomial of random Hermitian matrices, the connection in this case was established in \cite{BWW}. The main result of \cite{BWW} is that for suitable random Hermitian matrices $H_N$, the quantity
\begin{equation*}
\frac{\vert \det(H_N -x) \vert^{\gamma}}{\mathbb{E}\vert \det(H_N -x) \vert^{\gamma}} dx
\end{equation*}
converges in law to the GMC measure on the unit interval $[0,1]$.\footnote{Actually in \cite{BWW} the limiting GMC measure is defined on $[-1,1]$ but of course by a change of variable we can write everything on $[0,1]$.} Therefore the same applications as the ones given in \cite{remy} hold and in particular one can conjecture that the following convergence in law holds:
\begin{align*}
\max_{x \in [0,1]} \ln \vert &\det(H_N -x)\vert -  \ln N + \frac{3}{4}\ln \ln N\\
 &\underset{ N \rightarrow \infty}{\rightarrow}   \: \mathcal{G}_1 + \mathcal{G}_2 + \mathcal{N}(0, 4 \ln 2) + \ln \tilde{X}_2 + \ln \tilde{X}_3  + c.
\end{align*}
This conjecture first appeared in \cite{FyoSimm} although it was written on $[-1,1]$ instead of $[0,1]$.

\section{The shift equations on $a$ and $p$}\label{sec_shift}

To prove Theorem \ref{main_result} we proceed in two steps. We first completely determine the dependence of $M(\gamma,p,a,b)$ on the parameters $a$ and $b$, see the result of Proposition \ref{prop_a} just below. We are then left with an unknown function $C(p)$ of $p$ (and $\gamma$) and give its value in Proposition \ref{prop_p}. Throughout this section we extensively use the fact that $U(t)$ and $\tilde{U}(t)$ are solutions of the hypergeometric equations of Proposition \ref{BPZ} proven in section \ref{section DE}.

\subsection{The shifts in $a$}
The goal of this subsection is to prove the shift equations \eqref{shift_equa1}, \eqref{shift_equa2} on $a$ and $b$ to completely determine the dependence of $M(\gamma,p,a,b)$ on these two parameters. By symmetry we will write everything only for $a$. We will thus prove that:
\begin{proposition}\label{prop_a} For $\gamma \in (0,2)$ and $a,b,p$ satisfying the bounds \eqref{bounds}, $M(\gamma, p, a,b )$ is given by the expression,
\begin{small}
\begin{equation}\label{equation M(a,b)}
 C(p) \frac{\Gamma_{\frac{\gamma}{2}}( \frac{2}{\gamma}(a+1 ) -(p-1) \frac{\gamma}{2}  )
\Gamma_{\frac{\gamma}{2}}( \frac{2}{\gamma}(b+1 ) -(p-1) \frac{\gamma}{2}  ) \Gamma_{\frac{\gamma}{2}}(
\frac{2}{\gamma}(a+b+2 ) -(p-2) \frac{\gamma}{2}  )  }{\Gamma_{\frac{\gamma}{2}}( \frac{2}{\gamma}(a+1 ) +
\frac{\gamma}{2}  ) \Gamma_{\frac{\gamma}{2}}( \frac{2}{\gamma}(b+1 ) + \frac{\gamma}{2}  )
\Gamma_{\frac{\gamma}{2}}( \frac{2}{\gamma}(a+b+2 ) -(2p-2) \frac{\gamma}{2}  )  
},
\end{equation}
\end{small}
where $C(p)$ is the function that contains the remaining dependence on $p$ (and $\gamma$). It will be computed in subsection \ref{subsection shift p}.
\end{proposition}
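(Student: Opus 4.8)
The plan is to read the two shift equations \eqref{shift_equa1} and \eqref{shift_equa2} off the hypergeometric equations of Proposition \ref{BPZ}, to check that the expression \eqref{expression_a_b} solves them, and then to conclude by a double--periodicity argument. Throughout I fix $\gamma,p,b$ and view $U$ as a function of $t\le 0$. First I would record the two boundary behaviours of $U$. Since the integrand depends continuously on $t$ at $t=0$, letting $t\to 0^-$ gives $U(0)=M(\gamma,p,a+\frac{\gamma^2}{4},b)$, and likewise $\tilde U(0)=M(\gamma,p,a+1,b)$. For the other end, writing $(x-t)^{\frac{\gamma^2}{4}}=(-t)^{\frac{\gamma^2}{4}}(1+x/(-t))^{\frac{\gamma^2}{4}}$ with $(1+x/(-t))^{\frac{\gamma^2}{4}}\to 1$ uniformly on $[0,1]$, and using the moment bounds \eqref{bounds} to justify dominated convergence inside $\mathbb{E}$, yields the genuine leading--order asymptotics
\begin{equation*}
U(t)\sim M(\gamma,p,a,b)\,(-t)^{\frac{p\gamma^2}{4}},\qquad t\to-\infty,
\end{equation*}
and similarly $\tilde U(t)\sim M(\gamma,p,a,b)(-t)^{p}$. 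The exponents of the two Frobenius solutions of \eqref{ODE U} at infinity are $-A=\frac{p\gamma^2}{4}$ and $-B=(a+b+1)+(2-p)\frac{\gamma^2}{4}$, and the asymptotic just computed corresponds to $-A$.

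The key step is the identification of $U$ as one particular solution. I would first work in the open sub--region of \eqref{bounds} where $-B>-A$, i.e.\ $a+b+1>(2p-2)\frac{\gamma^2}{4}$. There the $(-t)^{-B}$ solution grows strictly faster than $(-t)^{-A}$, so the exact asymptotic $U(t)\sim M(\gamma,p,a,b)(-t)^{-A}$ forces the coefficient of the $(-t)^{-B}$ solution to vanish; hence $U$ equals $M(\gamma,p,a,b)$ times the Frobenius solution at infinity with exponent $-A$. I would then apply the connection formula \eqref{hpy1} to expand this solution near $t=0$. Because the local exponent $1-C=1+a+\frac{\gamma^2}{4}$ is positive on all of \eqref{bounds} (this is exactly the constraint $a>-1-\frac{\gamma^2}{4}$), the singular solution $t^{1-C}$ vanishes at the origin, so the value at $t=0$ of the exponent--$(-A)$ solution is precisely the connection coefficient, an explicit ratio of four Gamma functions. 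Equating this with $U(0)/M(\gamma,p,a,b)=M(\gamma,p,a+\frac{\gamma^2}{4},b)/M(\gamma,p,a,b)$ gives exactly \eqref{shift_equa1}, and the same computation applied to $\tilde U$ with parameters $(\tilde A,\tilde B,\tilde C)$ gives \eqref{shift_equa2}. Since $M(\gamma,p,a,b)$ is analytic in $(a,b,p)$ on \eqref{bounds} and the right--hand sides are meromorphic, both identities extend from the sub--region $-B>-A$ to all of \eqref{bounds} by analytic continuation.

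Finally I would verify, using the two functional equations of $\Gamma_{\frac{\gamma}{2}}$ stated in the introduction, that the expression \eqref{expression_a_b} itself satisfies \eqref{shift_equa1} and \eqref{shift_equa2} (and, by the $a\leftrightarrow b$ symmetry, the analogous shifts in $b$). Consequently the ratio of $M(\gamma,p,a,b)$ to \eqref{expression_a_b} is invariant under $a\mapsto a+\frac{\gamma^2}{4}$, $a\mapsto a+1$ and the corresponding shifts in $b$. When $\frac{\gamma^2}{4}\notin\mathbb{Q}$ the subgroup of $\mathbb{R}$ generated by $\frac{\gamma^2}{4}$ and $1$ is dense, so by continuity of $M$ in $a,b$ this ratio is constant in $a$ and $b$; calling it $C(p)$ establishes the proposition for such $\gamma$, and a continuity argument in $\gamma$ covers the remaining rational values of $\frac{\gamma^2}{4}$.

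The main obstacle I anticipate is the interface between the probabilistic asymptotics and the ODE. The crucial leverage comes from the leading--order statement $U(t)\sim M(\gamma,p,a,b)(-t)^{-A}$ being strong enough to \emph{exclude} the faster mode $(-t)^{-B}$ in the regime $-B>-A$, which is exactly what pins $U$ down to a single solution; making this uniform (rather than merely along subsequences) under the sole hypothesis \eqref{bounds} is the delicate part. A secondary difficulty is bookkeeping the degenerate parameter values where \eqref{hpy1} acquires logarithms, which I would again absorb into the analytic--continuation step.
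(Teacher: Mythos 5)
Your skeleton is the same as the paper's: identify $C_1=M(\gamma,p,a+\frac{\gamma^2}{4},b)$ and $D_1=M(\gamma,p,a,b)$, show the second coefficient at infinity vanishes, read \eqref{shift_equa1}--\eqref{shift_equa2} off the connection formula \eqref{hpy1}, and conclude via the two incommensurable shifts plus continuity in $\gamma$. The genuine divergence is in how $D_2=0$ is obtained. You use only the leading-order asymptotic $U(t)\sim M(\gamma,p,a,b)|t|^{-A}$, which indeed excludes the $|t|^{-B}$ mode, but \emph{only} on the sub-region $-B>-A$, i.e. $p<1+\frac{2}{\gamma^2}(1+a+b)$. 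The paper instead proves a quantitative next-order estimate, $|U(t)-D_1|t|^{-A}|\le c'|t|^{-A-1}M(\gamma,p,a,b)$, by an interpolation argument combined with the Girsanov theorem, and then observes that the bounds \eqref{bounds} force $(-A)-(-B)<1$; since $|t|^{-B}$ then decays strictly slower than $|t|^{-A-1}$, this kills $D_2$ on \emph{all} of \eqref{bounds} at once, with no continuation step. Your route trades that probabilistic estimate for an analytic-continuation argument; this is a legitimate alternative, but it is exactly where your write-up has a gap.

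The gap is your assertion that $M(\gamma,p,a,b)$ is analytic in $(a,b,p)$ on \eqref{bounds}. Analyticity in $p$ (for fixed $a,b$) is standard — it is the Mellin-type analyticity of $p\mapsto\E[Y^p]$ for a fixed positive random variable on the open interval of finiteness, which is what the paper's footnote about complex $p$ refers to. Analyticity in $a$ and $b$, however, is not available at this stage: complexifying $a$ makes $\int_0^1 x^a(1-x)^b e^{\frac{\gamma}{2}X(x)}dx$ complex-valued, so its $p$-th power is not even well defined without controlling the argument of the integral, and no such control is established. Indeed the paper only obtains analyticity of $M$ in $a,b$ \emph{as a consequence} of the formula \eqref{equation M(a,b)} (it is invoked, with that justification, in the $+\frac{4}{\gamma^2}$ shift argument of subsection \ref{subsection shift p}), so using it as an input here is circular. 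Fortunately your argument can be repaired without it: for fixed $(a,b)$ the sub-region $-B>-A$ is a nonempty open half-line in $p$; both sides of \eqref{shift_equa1} are analytic in $p$ on the full range allowed by \eqref{bounds} (the left side as a ratio of moment functions of fixed positive random variables, the right side because the bounds keep all Gamma arguments positive); hence the identity extends in $p$ alone, and the rest of your proof — verifying that \eqref{expression_a_b} satisfies both shift equations, the density argument for $\frac{\gamma^2}{4}\notin\mathbb{Q}$, and continuity in $\gamma$ — goes through as in the paper. With that single substitution (continuation in $p$ rather than in $(a,b,p)$), your proof is correct.
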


\noindent
$\diamond$  \textit{The $+ \frac{\gamma^2}{4}$ shift equation} \\ 
Here we start with the first auxiliary function, for $\gamma \in (0,2)$ and $a, b, p$ satisfying \eqref{bounds}: 
\begin{equation}
U(t) =   \mathbb{E}[ (\int_{0}^{1} (x-t)^{\frac{\gamma^2}{4}}  x^{a}(1-x)^{b} e^{\frac{\gamma}{2} X( x) } d x)^p ]. 
\end{equation}
From the result of Proposition \ref{BPZ}, $U(t)$ is solution to a hypergeometric equation. As explained in appendix \ref{sec_special} we can write the solutions of this hypergeometric equation for $t \in (- \infty,0)$ in two different bases, one corresponding to an expansion in powers of $\vert t \vert$ and one to an expansion in power of $\vert t \vert^{-1}$. Since the solution space is a two-dimensional real vector space, each basis will be parametrized by two real constants. Let $C_1,C_2$ and $D_1,D_2$ stand for these constants. The theory of hypergeometric equations then gives an explicit change of basis formula \eqref{hpy1} linking $C_1,C_2$ and $D_1,D_2$. Thus we can write, when $A-B$ and $C$ are not integers,
\begin{align}
U(t) &= C_1 F(A,B,C,t) \label{equation 2.3} \\ 
&+ C_2 |t|^{1 -C} F(1 + A-C, 1 +B - C, 2 -C, t) \nonumber \\
&= D_1 |t|^{-A}F(A,1+A-C,1+A-B,t^{-1}) \\ 
 &+ D_2 |t|^{-B} F(B, 1 +B - C, 1 +B - A, t^{-1}), \nonumber
\end{align}
where $F$ is the hypergeometric function.
We recall that the parameters $A,B,C$ are given by:
\begin{equation}
A= -\frac{p\gamma^2}{4}, B=-(a+b+1)-(2-p)\frac{\gamma^2}{4}, C = -a-\frac{\gamma^2}{4}.
\end{equation}
The values of $A,B,C$ left out corresponding to $A-B$ or $C$ being integers will be recovered at the level of the shift equation \eqref{shift equation 1} by continuity. The idea is now to identify the constants $C_1, C_2, D_1, D_2$ by performing asymptotic expansions on $U(t)$. Two of the above constants are easily obtained by evaluating $U(t)$ in $t=0$ and by taking the limit $t \rightarrow - \infty$:
\begin{align}
 C_1  &= M( \gamma, p, a + \frac{\gamma^2}{4}, b ), \\
 D_1 &= M(\gamma, p, a, b  ).
\end{align}
By performing a more detailed asymptotic expansion in $t \rightarrow - \infty$ we claim that:
\begin{equation}
D_2=0.
\end{equation}
We sketch a short proof. For $t<-2$ (arbitrary) and $x\in [0,1]$,
$$(x-t)^{\frac{\gamma^2}{4}}-|t|^{\frac{\gamma^2}{4}} \le c |t|^{\frac{\gamma^2}{4}-1},$$
for some constant $c>0$.
By interpolating, for $t<-2$,
\begin{small}
\begin{align*}
|U(t)-D_1 |t|^{\frac{p\gamma^2}{4} } |&= \Big| \E[(\int_{0}^{1} \big(u(x-t)^{\frac{\gamma^2}{4}}+(1-u)|t|^{\frac{\gamma^2}{4}}\big)  x^{a}(1-x)^{b} e^{\frac{\gamma}{2} X( x) } d x)^{p} ]\,|_{u=1}\\
&\quad -\E[(\int_{0}^{1} \big(u(x-t)^{\frac{\gamma^2}{4}}+(1-u)|t|^{\frac{\gamma^2}{4}}\big)  x^{a}(1-x)^{b} e^{\frac{\gamma}{2} X( x) } d x)^{p} ]\,|_{u=0} \Big| \\
 &\le |p|
 \int_0^1 dx_1 \big((x_1-t)^{\frac{\gamma^2}{4}}-|t|^{\frac{\gamma^2}{4}}\big)x_1^a(1-x_1)^b\\
  &\quad \times \Big(\E[(\int_{0}^{1} \frac{(x-t)^{\frac{\gamma^2}{4}}  x^{a}(1-x)^{b}}{|x_1-x|^{\frac{\gamma^2}{2}}} e^{\frac{\gamma}{2} X( x) } d x)^{p-1} ] \\
 &\quad \: \: +\E[(\int_{0}^{1} \frac{|t|^{\frac{\gamma^2}{4}}  x^{a}(1-x)^{b}}{|x_1-x|^{\frac{\gamma^2}{2}}} e^{\frac{\gamma}{2} X( x) } d x)^{p-1} ]\Big)\\
 &\le c'|t|^{\frac{p\gamma^2}{4}-1}M(\gamma,p,a,b)  
 \underset{t\to -\infty}{=} O(|t|^{\frac{p\gamma^2}{4}-1}),
\end{align*}
\end{small}
where in both steps we have used the Girsanov theorem (see appendix \ref{sec_th}) and $c'>0$ is some constant. However, by using the bound \eqref{bounds} over $p$:
\begin{equation}
(-A)-(-B) = -(a+b+1+(2-2p)\frac{\gamma^2}{4})<1.
\end{equation}
This implies that $D_2=0$. We then use the following identity coming from the theory of hypergeometric functions \eqref{hpy1}:
\begin{equation}
C_1 = \frac{\Gamma(1-C) \Gamma(A- B+1) }{\Gamma(A-C+1) \Gamma(1-B)} D_1. 
\end{equation}
This leads to the first shift equation \eqref{shift_equa1}:
\begin{equation}\label{shift equation 1}
\frac{M(\gamma, p, a + \frac{\gamma^2}{4} ,b )}{M(\gamma, p, a,b )}  =  
\frac{\Gamma(1 + a + \frac{\gamma^2}{4} ) \Gamma(2 + a + b - (2p -2
)\frac{\gamma^2}{4}  )  }{\Gamma(1 + a -(p-1) \frac{\gamma^2}{4} ) \Gamma(2 + a + b
- (p -2 )\frac{\gamma^2}{4}  ) } .
\end{equation}

\noindent
$\diamond$  \textit{The $+1$ shift equation} \\
We now write everything with the second auxiliary function, for $\gamma \in (0,2)$ and $a, b, p$ satisfying \eqref{bounds}: 
\begin{equation}
\tilde{U}(t) =   \mathbb{E}[ (\int_{0}^{1} (x-t)  x^{a}(1-x)^{b} e^{\frac{\gamma}{2} X( x) } d x)^p ] .
\end{equation}
Again we write the solutions of the hypergeometric equation around $t =0_-$ and $t = -\infty$, when $\tilde{C}$ and $\tilde{A} - \tilde{B}$ are not integers,
\begin{align}
\tilde{U}(t) &= \tilde{C}_1 F(\tilde{A},\tilde{B},\tilde{C},t)\\ \nonumber
 &+ \tilde{C}_2 |t|^{1 -\tilde{C}} F(1 + \tilde{A}-\tilde{C}, 1 +\tilde{B} - \tilde{C}, 2 -\tilde{C}, t) \\ 
&= \tilde{D}_1 |t|^{-\tilde{A}}F(\tilde{A},1+\tilde{A}-\tilde{C},1+\tilde{A}-\tilde{B},t^{-1}) \\ \nonumber
&+ \tilde{D}_2 |t|^{-\tilde{B}} F(\tilde{B}, 1 +\tilde{B} - \tilde{C}, 1 +\tilde{B} - \tilde{A}, t^{-1}).
\end{align}
As before we have introduced four real constants $\tilde{C}_1, \tilde{C}_2, \tilde{D}_1, \tilde{D}_2$ and $\tilde{A}, \tilde{B}, \tilde{C}$ are given by:
\begin{equation}
\tilde{A}= -p, \tilde{B}=-\frac{4}{\gamma^2}(a+b+2)+p-1, \tilde{C} = -\frac{4}{\gamma^2}(a+1).
\end{equation}
Two of our constants are again easily obtained,
\begin{align}
\tilde{C}_1  &= M( \gamma, p, a + 1, b ), \\
\tilde{D}_1 &= M(\gamma, p, a, b  ), 
\end{align}
and we can proceed as previously to obtain:
\begin{equation}
\tilde{D}_2=0.
\end{equation}
The relation between $\tilde{C}_1$ and $\tilde{D}_1$ \eqref{hpy1} then leads to the shift equation \eqref{shift_equa2}:
\begin{equation}\label{shift equation 2}
\frac{M(\gamma, p, a + 1 ,b )}{M(\gamma, p, a,b )}  =  
\frac{\Gamma(\frac{4}{\gamma^2}(1 +a) +1 ) \Gamma( \frac{4}{\gamma^2
} (2 + a +b) - (2p-2)) }{\Gamma(\frac{4}{\gamma^2}(1 +a) -(p-1 )) \Gamma(
\frac{4}{\gamma^2
} (2 + a +b) - (p-2)) } .
\end{equation}
Therefore for $\frac{\gamma^2}{4} \notin \mathbb{Q}$, \eqref{shift equation 1} and \eqref{shift equation 2} prove the formula of Proposition \ref{prop_a}. The result for the other values of $\gamma$ follows from the well known fact that $\gamma \mapsto M(\gamma,p,a,b)$ is a continuous function.

\subsection{The shifts in $p$}\label{subsection shift p}
We now tackle the problem of determining two shift equations on $p$, \eqref{relation_c1} and \eqref{relation_c2}, to completely determine the function $C(p)$ of Proposition \ref{prop_a}. We will work only with $U(t)$. The idea is to perform a computation at the next order in the expressions of the previous subsection. This will give the desired result:
\begin{proposition}\label{prop_p} For $\gamma \in (0,2)$ and $ p < \frac{4}{\gamma^2}$:
\begin{equation}
C(p) =  \frac{(2 \pi)^p}{\Gamma( 1  -  \frac{\gamma^2}{4} ) ^p}  (\frac{2}{\gamma})^{p\frac{\gamma^2}{4}}  \frac{\Gamma_{\frac{\gamma}{2}} ( \frac{2}{\gamma} -p \frac{\gamma}{2} ) }{\Gamma_{\frac{\gamma}{2}}(\frac{2}{\gamma})}.
\end{equation}
\end{proposition}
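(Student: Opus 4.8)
The plan is to reduce Proposition \ref{prop_p} to the two difference equations \eqref{relation_c1} and \eqref{relation_c2} and then to solve them. Both are obtained from the behaviour of $U(t)$ as $t \to 0^-$, following the previous subsection but pushing the local expansion one order further. Recall that near $t=0$ we wrote $U(t) = C_1 F(A,B,C,t) + C_2 |t|^{1-C} F(1+A-C,1+B-C,2-C,t)$, and that the leading analysis gave $C_1 = M(\gamma,p,a+\tfrac{\gamma^2}{4},b)$, $D_1 = M(\gamma,p,a,b)$ and $D_2 = 0$. Since $t<0$ there are no branch subtleties ($-t=|t|>0$), so the connection formula \eqref{hpy1} together with $D_2=0$ already expresses the \emph{subleading} coefficient $C_2$ as an explicit ratio of ordinary Gamma functions times $M(\gamma,p,a,b)$, in exactly the same way it produced $C_1$ for the shift in $a$. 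The whole point is then to compute this same $C_2$ a second, purely probabilistic way and to match the two expressions.

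For the probabilistic computation I would isolate the non-analytic part of $U(t)=\E[I(t)^p]$, with $I(t)=\int_0^1 (x-t)^{\frac{\gamma^2}{4}} x^a (1-x)^b e^{\frac{\gamma}{2}X(x)}\,dx$. Because $(x-t)^{\frac{\gamma^2}{4}}$ is smooth in $t$ away from $x=0$, the term of order $|t|^{1-C}=|t|^{1+a+\frac{\gamma^2}{4}}$ is generated entirely by the region $x\approx 0$, where the moving singularity collides with the boundary insertion $x^a$. Writing $I(t)^p = I(0)^p + p\,I(0)^{p-1}(I(t)-I(0)) + \dots$ and noting that only the linear term feeds the leading non-analytic scale (since $1-C>0$ under \eqref{bounds}), I would rescale $x=|t|u$ and use the exact scaling relation of the GMC measure, the radial decomposition \eqref{radial} and Girsanov's theorem (appendix \ref{sec_th}) to disentangle the macroscopic mass $I(0)$, which carries a moment of order $p-1$, from the microscopic mass created near the origin. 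Matching the resulting expression $C_2 = (\text{explicit constant})\times M(\gamma,p-1,a,b)$ with the connection-formula value gives the shift equation \eqref{shift_equa3}, hence \eqref{relation_c1}.

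The second relation \eqref{relation_c2} comes from the \emph{reflected} contribution of the same microscopic mass: by the Williams path decomposition (Theorem \ref{theo Williams}) the Brownian zero mode of $X$ at the origin has two regimes, and its subleading, reflection-governed regime contributes, after integrating out the zero mode, a factor $\Gamma(\tfrac{4}{\gamma^2}-p)$ accompanied by a moment $M(\gamma,p-\tfrac{4}{\gamma^2},a,b)$. This yields \eqref{relation_c2} with a positive constant $f(\gamma)$ that I will not need to evaluate explicitly. I expect this reflection estimate---pinning down the precise coefficient of the non-analytic term uniformly in $t$---to be the main obstacle; it is the step where the GMC tail estimates of subsection \ref{section tail} are needed, and where the delicate range $-1-\frac{\gamma^2}{4}<a\le -1$ flagged after \eqref{bounds} requires extra care to guarantee that Proposition \ref{BPZ} and the present expansions remain valid.

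It then remains to solve the two difference equations. One verifies directly that the candidate \eqref{shift_c1},
\begin{equation*}
C(p) = \frac{(2\pi)^p}{\Gamma(1-\frac{\gamma^2}{4})^p}\Big(\frac{2}{\gamma}\Big)^{p\frac{\gamma^2}{4}} \frac{\Gamma_{\frac{\gamma}{2}}(\frac{2}{\gamma}-p\frac{\gamma}{2})}{\Gamma_{\frac{\gamma}{2}}(\frac{2}{\gamma})},
\end{equation*}
solves \eqref{relation_c1} by the functional equation $\Gamma_{\frac{\gamma}{2}}(x)/\Gamma_{\frac{\gamma}{2}}(x+\frac{\gamma}{2}) = \frac{1}{\sqrt{2\pi}}\Gamma(\frac{\gamma x}{2})(\frac{\gamma}{2})^{-\frac{\gamma x}{2}+\frac{1}{2}}$ applied at $x=\frac{2}{\gamma}-p\frac{\gamma}{2}$, and solves \eqref{relation_c2} by the dual equation $\Gamma_{\frac{\gamma}{2}}(x)/\Gamma_{\frac{\gamma}{2}}(x+\frac{2}{\gamma}) = \frac{1}{\sqrt{2\pi}}\Gamma(\frac{2x}{\gamma})(\frac{\gamma}{2})^{\frac{2x}{\gamma}-\frac{1}{2}}$, the latter simultaneously fixing the value of $f(\gamma)$. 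For uniqueness, the ratio of any two solutions of \eqref{relation_c1} and \eqref{relation_c2} is periodic with both periods $1$ and $\frac{4}{\gamma^2}$, hence constant whenever $\frac{\gamma^2}{4}\notin\mathbb{Q}$; this constant is pinned down at $p=0$, where $M(\gamma,0,a,b)=1$ and the $a,b$-dependent prefactor of Proposition \ref{prop_a} collapses to $1$, forcing $C(0)=1$. Finally the continuity of $\gamma\mapsto M(\gamma,p,a,b)$ removes the arithmetic restriction and yields Proposition \ref{prop_p} for all $\gamma\in(0,2)$.
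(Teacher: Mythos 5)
Your overall architecture is exactly the paper's: extract the subleading coefficient $C_2$ of $U(t)$ at $t=0_-$ probabilistically, match it against the value $C_2=\frac{\Gamma(C-1)\Gamma(A-B+1)}{\Gamma(A)\Gamma(C-B)}D_1$ forced by the connection formula \eqref{hpy1} and $D_2=0$, deduce the two difference equations \eqref{relation_c1} and \eqref{relation_c2}, and then solve them. Your closing argument (the candidate solves both equations via the two functional equations of $\Gamma_{\frac{\gamma}{2}}$, uniqueness because a continuous ratio with the two incommensurable periods $1$ and $\frac{4}{\gamma^2}$ is constant when $\frac{\gamma^2}{4}\notin\mathbb{Q}$, normalization $C(0)=1$ from $M(\gamma,0,a,b)=1$, and continuity in $\gamma$ to remove the arithmetic restriction) is correct and is essentially the paper's. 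Your sketch of \eqref{relation_c1} is also the paper's in spirit, with one technical caveat: reading $C_2$ off the linear term alone, i.e. from the equivalent of $U(t)-U(0)$ with $0<1-C<1$, only works for $\gamma<\sqrt{2}$; the paper must instead take $0<a<1-\frac{\gamma^2}{4}$, $b=0$ (so that $1<1-C<2$), expand to second order, and identify the coefficient of $|t|^{1+a+\frac{\gamma^2}{4}}$ in $U(t)-U(0)-tU'(0)$; moreover the Girsanov computation naturally yields $M(\gamma,p-1,a-\frac{\gamma^2}{4},0)$ rather than $M(\gamma,p-1,a,b)$, and one must invoke the already-proved shift \eqref{shift equation 1} to remove the $-\frac{\gamma^2}{4}$.

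The genuine gap is your derivation of \eqref{relation_c2}. You claim the reflection regime of the radial decomposition directly contributes a factor $\Gamma(\frac{4}{\gamma^2}-p)$ together with a moment $M(\gamma,p-\frac{4}{\gamma^2},a,b)$. That is not what the expansion yields. The fusion estimate (Lemma \ref{lemma fusion}) gives a non-analytic term $|t|^{1+a+\frac{\gamma^2}{4}}$ multiplying $\frac{\Gamma(-p+1+\frac{4}{\gamma^2}(a+1))}{\Gamma(-p)}\,M(\gamma,\,p-1-\frac{4}{\gamma^2}(1+a),\,-2-a-\frac{\gamma^2}{4},\,0)$: integrating out the zero mode $e^{\frac{\gamma}{2}M}$ uses $\P(e^{\frac{\gamma}{2}M}>v)=v^{-\frac{2}{\gamma}(Q-\alpha)}$, so the shift in the moment is $\frac{2}{\gamma}(Q-\alpha)=1+\frac{4}{\gamma^2}(1+a)$, dictated by the insertion strength $a$, and the insertion itself comes out reflected, $a\mapsto -2-a-\frac{\gamma^2}{4}$. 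Forcing this shift to equal $\frac{4}{\gamma^2}$ would require $a=-\frac{\gamma^2}{4}$, i.e. $\alpha=\frac{\gamma}{2}$, which lies outside the reflection regime $\alpha\in(\frac{\gamma}{2},Q)$ and far from the window $-1-\frac{\gamma^2}{4}<a<-1-\frac{\gamma^2}{4}+a_0$ in which the fusion lemma can be established. The paper needs three further ingredients that your proposal lacks: (i) analyticity of $M(\gamma,p,a,b)$ in $a$, available from \eqref{equation M(a,b)} of Proposition \ref{prop_a}, which allows the $p$-independence of the coefficient $g(\gamma,a)$ in \eqref{equation 2.25} to be analytically continued from that small window to all of $-1-\frac{\gamma^2}{4}<a<-1$; (ii) the arithmetic choice $a=-\frac{(k+1)\gamma^2}{4}$ when $\frac{4}{k+1}<\gamma^2<\frac{4}{k}$, which makes the shift equal to $\frac{4}{\gamma^2}-k$; and (iii) a lengthy recombination with \eqref{shift equation 1}, \eqref{shift equation 2} and \eqref{shift equation 3} to undo the reflection of the insertion and the remaining $k$ unit shifts, which is where the clean factor $(\frac{\gamma}{2})^{-p}\Gamma(\frac{4}{\gamma^2}-p)$ of \eqref{shiftc2} actually emerges. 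Without these steps the Williams-decomposition heuristic does not produce \eqref{relation_c2}, and your proof of Proposition \ref{prop_p} is incomplete at its most novel point.
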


\noindent
$\diamond$  \textit{The $+1$ shift equation} \\
Since we have completely determined the dependence of $M$ on $a,b$ by equation \eqref{equation M(a,b)} we are free to choose $a$ and $b$ as we wish. To find the next order in $t \to 0_-$, the most natural idea is to take $a$ such that $0<1-C=1+a+\frac{\gamma^2}{4}<1$, and then it suffices to study the equivalent of $U(t)-U(0)$ when $t\to 0_-$. For technical reasons this only gives the expression of $C_2$ when $\gamma < \sqrt{2}$. To obtain $C_2$ for all $\gamma \in (0,2)$, we will need to go one order further in the asymptotic expansion and we make the choice $0<a<1-\frac{\gamma^2}{4}$ and $b=0$. In this case, we have $p<\frac{4}{\gamma^2}$, $1<1-C<2$. We perform a Taylor expansion around $t=0_-$,
\begin{equation*}
U(t)=U(0)+tU'(0)+t^2\int_0^1 U''(tu) (1-u)du ,
\end{equation*}
with
\begin{small}
\begin{equation*} 
\begin{split}
U''(tu) &\overset{(\star)}{=}-\frac{p\gamma^2}{4}\int_{0}^{1}  dx_1 (x_1-tu)^{\frac{\gamma^2}{4}-1}x_1^a\frac{a}{x_1}
\E[ (\int_{0}^{1} \frac{(x-tu)^{\frac{\gamma^2}{4}}  x^{a}}{|x-x_1|^{\frac{\gamma^2}{2}}} e^{\frac{\gamma}{2} X( x) } d x)^{p-1}]\\
&=-\frac{p\gamma^2a}{4}|tu|^{-1+a+\frac{\gamma^2}{4}}\int_{0}^{-\frac{1}{tu}}  dy (y+1)^{\frac{\gamma^2}{4}-1}y^{a-1} \E[ (\int_{0}^{1} \frac{(x-tu)^{\frac{\gamma^2}{4}}  x^{a}}{|x+tuy|^{\frac{\gamma^2}{2}}} e^{\frac{\gamma}{2} X( x) } d x)^{p-1}].
\end{split}
\end{equation*}
\end{small}
$(\star)$ comes from multiple applications of the Girsanov theorem (see appendix \ref{sec_th}) and symmetrization tricks. One may refer to \eqref{equation U'' primary} where we calculate rigorously the derivatives of $U(t)$. Next we have the following bound for $y \in [0,-\frac{1}{tu}]$, $u\in [0,1]$, and $t\in [-1,0]$:
\begin{small}
\begin{equation*}
\begin{split}
 &\E[ (\int_{0}^{1} \frac{(x-tu)^{\frac{\gamma^2}{4}}  x^{a}}{|x+tuy|^{\frac{\gamma^2}{2}}} e^{\frac{\gamma}{2} X( x) } d x)^{p-1}] \\
 &\le \sup_{x_1\in [0,1]}\big\{ \E[ (\int_{0}^{1} \frac{x^{a+\frac{\gamma^2}{4}} }{|x-x_1|^{\frac{\gamma^2}{2}}} e^{\frac{\gamma}{2} X( x) } d x)^{p-1}]+\E[ (\int_{0}^{1} \frac{(x+1)^{\frac{\gamma^2}{4}}  x^{a}}{|x-x_1|^{\frac{\gamma^2}{2}}} e^{\frac{\gamma}{2} X( x) } d x)^{p-1}]\big\}\\
& <\infty.
 \end{split}
\end{equation*}
\end{small}
Then we get by dominant convergence that,
\begin{small}
\begin{equation*}
U''(tu) \stackrel{t\to 0_-}{\sim}-\frac{p\gamma^2a}{4}|tu|^{-1+a+\frac{\gamma^2}{4}}\int_{0}^{\infty}  dy (y+1)^{\frac{\gamma^2}{4}-1}y^{a-1} M(\gamma,p-1,a-\frac{\gamma^2}{4},0),
\end{equation*}
\end{small}
and again by dominant convergence:
\begin{small}
\begin{equation*}
\begin{split}
&U(t)-U(0)-tU'(0)\\
&=-\frac{p\gamma^2 a}{4} \frac{\Gamma(a+\frac{\gamma^2}{4})}{\Gamma(2+a+\frac{\gamma^2}{4})}|t|^{1+a+\frac{\gamma^2}{4}}\int_{0}^{\infty}  dy (y+1)^{\frac{\gamma^2}{4}-1}y^{a-1} M(\gamma,p-1,a-\frac{\gamma^2}{4},0)\\
&+o(|t|^{1+a+\frac{\gamma^2}{4}}).
\end{split}
\end{equation*}
\end{small}
The value of the integral above is given by (\ref{equation integral computation 2}). We arrive at the expression for $C_2$:
\begin{equation}
C_2 = p\frac{\Gamma(a+1)\Gamma(-a-\frac{\gamma^2}{4}-1)}{\Gamma(-\frac{\gamma^2}{4})} M(\gamma, p-1, a - \frac{\gamma^2}{4},0).
\end{equation}
The theory of hypergeometric equations \eqref{hpy1} gives this time the relation:
\begin{equation}
C_2 =  \frac{\Gamma(C-1) \Gamma(A- B+1) }{\Gamma(A) \Gamma(C-B)} D_1.
\end{equation}
By identifying the above two expressions of $C_2$, we get
\begin{small}
\begin{equation*}
M(\gamma, p-1, a -\frac{\gamma^2}{4},0) = \frac{\Gamma(1 + a -(p-1) \frac{\gamma^2}{4} ) \Gamma(2 + a 
- (p -2 )\frac{\gamma^2}{4}  ) }{\Gamma(1 +a ) \Gamma(2 + a - (2p -3
)\frac{\gamma^2}{4}  )  }M(\gamma,  p,a, 0  ).
\end{equation*}
\end{small}
By using the shift equation \eqref{shift equation 1} on $a$, we can drop the $-\frac{\gamma^2}{4}$ after $a$ in the expression $M(\gamma, p-1, a -\frac{\gamma^2}{4},0)$ and we obtain for $0<a<1-\frac{\gamma^2}{4}$ and $b= 0$,
\begin{footnotesize}
\begin{equation*}
\frac{M(\gamma, p,a, 0  )}{M(\gamma, p-1,a, 0  )}=\frac{\Gamma(1-\frac{p\gamma^2}{4})}{\Gamma(1-\frac{\gamma^2}{4})} \frac{\Gamma(1 + a -(p-1) \frac{\gamma^2}{4} ) \Gamma(1  -(p-1) \frac{\gamma^2}{4} )\Gamma(2 + a 
- (p -2 )\frac{\gamma^2}{4}  ) }{ \Gamma(2 + a  - (2p -3
)\frac{\gamma^2}{4}  )  \Gamma(2 + a 
- (2p -2 )\frac{\gamma^2}{4}  )}.
\end{equation*}
\end{footnotesize}
Combined with \eqref{equation M(a,b)}, this leads to a first relation on our constant $C(p)$, for $p<\frac{4}{\gamma^2}$,
\begin{equation}\label{shiftc}
\frac{C(p)}{C(p-1)}  = \sqrt{2 \pi}  (\frac{\gamma}{2})^{(p-1)\frac{\gamma^2}{4} -\frac{1}{2} }  \frac{\Gamma(1 -p\frac{\gamma^2}{4} )   }{ \Gamma( 1  -  \frac{\gamma^2}{4} )  }.
\end{equation}
Reversely, \eqref{shiftc} and \eqref{equation M(a,b)} show that for all $a,b,p$ satisfying the bounds \eqref{bounds}:
\begin{footnotesize}
\begin{align}\label{shift equation 3}
& \frac{M(\gamma, p,a, b  )}{M(\gamma, p-1,a, b  )}= \\ \nonumber
&\frac{\Gamma(1-\frac{p\gamma^2}{4})}{\Gamma(1-\frac{\gamma^2}{4})} \frac{\Gamma(1 + a -(p-1) \frac{\gamma^2}{4} ) \Gamma(1 + b -(p-1) \frac{\gamma^2}{4} )\Gamma(2 + a + b
- (p -2 )\frac{\gamma^2}{4}  ) }{ \Gamma(2 + a + b - (2p -3
)\frac{\gamma^2}{4}  )  \Gamma(2 + a + b
- (2p -2 )\frac{\gamma^2}{4}  )}.
\end{align}
\end{footnotesize}

\noindent
$\diamond$  \textit{The $+ \frac{4}{\gamma^2}$ shift equation} \\
Since the relation \eqref{shiftc} is not enough to completely determine the function $C(p)$, we seek another relation on $C(p)$ that is not predicted by the Selberg integral. The techniques of this subsection are a little more involved, they lead to a relation between $C(p)$ and $C(p - \frac{4}{\gamma^2})$.   Again we can pick $a$ and $b$ as we wish so we choose $b=0$ and $-1-\frac{\gamma^2}{4}<a<-1-\frac{\gamma^2}{4}+a_0$ where $a_0>0$ is a constant introduced in lemma \ref{lemma fusion} of appendix \ref{section fusion}. The asymptotic in $t \rightarrow 0_-$ of the following quantity is then given by the lemma \ref{lemma fusion},
\begin{small}
\begin{equation*}
\begin{split}
&\mathbb{E}[ (\int_{0}^{1} (x-t)^{\frac{\gamma^2}{4}} \, x^{a} \,e^{\frac{\gamma}{2} X( x) } d x)^p ]  - \mathbb{E}[ (\int_{0}^{1} x^{a+\frac{\gamma^2}{4}} \,e^{\frac{\gamma}{2} X( x) } d x)^p ]  \\
&= g(\gamma,a)\frac{\Gamma(-p+ 1+\frac{4}{\gamma^2}(a+1))}{\Gamma(-p)}|t|^{1+a+\frac{\gamma^2}{4}} M(\gamma,p-1-\frac{4}{\gamma^2}(a+1),-2-a-\frac{\gamma^2}{4},0)\\
&+o( |t|^{1 +a+\frac{\gamma^2}{4}}),
\end{split}
\end{equation*}
\end{small}
where $g(\gamma,a)$ is a real function that only depends on $\gamma$ and $a$. Comparing with the expansion \eqref{equation 2.3}, we have:
\begin{equation}
C_2 =g(\gamma,a)\frac{\Gamma(-p+ 1+\frac{4}{\gamma^2}(a+1))}{\Gamma(-p)} M(\gamma,p-1-\frac{4}{\gamma^2}(a+1),-2-a-\frac{\gamma^2}{4},0).
\end{equation}
With the identity \eqref{hpy1} coming from hypergeometric equations:
\begin{align*}
C_2 &=  \frac{\Gamma(C-1) \Gamma(A- B+1) }{\Gamma(A) \Gamma(C-B)} D_1\\
 &= \frac{\Gamma(-1-a-\frac{\gamma^2}{4}) \Gamma( 2+a-(2p-2)\frac{\gamma^2}{4})}{\Gamma(-p\frac{\gamma^2}{4}) \Gamma(1-(p-1)\frac{\gamma^2}{4})} M(\gamma, p,a,0).
\end{align*}
Comparing the above two expressions of $C_2$ yields:
\begin{footnotesize}
\begin{align}\label{equation 2.25}
&g(\gamma,a)=\\ \nonumber
&\frac{ M(\gamma, p,a,0)}{M(\gamma,p-1-\frac{4}{\gamma^2}(a+1),-2-a-\frac{\gamma^2}{4},0)}\frac{\Gamma(-p)\Gamma(-1-a-\frac{\gamma^2}{4}) \Gamma(2+a-(2p-2)\frac{\gamma^2}{4})}{\Gamma(-p+ 1+\frac{4}{\gamma^2}(a+1))\Gamma(-p\frac{\gamma^2}{4}) \Gamma(1-(p-1)\frac{\gamma^2}{4})}.
\end{align}
\end{footnotesize}
A crucial remark is that from \eqref{equation M(a,b)} and analycity of the function $\Gamma_{\gamma}$, $M(\gamma,p,a,b)$ is analytic in $a,b$. Thus the right hand side of \eqref{equation 2.25} is analytic in $a$. We can then do analytic continuation simultaneously for both sides in the above equation. This shows that the expression of the right hand side does not depend on $p$ not only for  $-1-\frac{\gamma^2}{4}<a<-1-\frac{\gamma^2}{4}+a_0$ but for all appropriate $a$ where the expression is well-defined, i.e. $-1-\frac{\gamma^2}{4}<a<-1$.

In the following computations $f(\gamma)$ stands for a real function depending only on $\gamma$ and we will use the abuse of notation that it could be a different function of $\gamma$ every time it appears. Consider the case where $\frac{4}{k+1}<\gamma^2<\frac{4}{k}$ for a $k \in \mathbb{N}^*$. For this range of $\gamma$ we make the special choice $a= -\frac{(k+1)\gamma^2}{4}$ and thus the bounds $-1-\frac{\gamma^2}{4}<a<-1$ on $a$ are satisfied. In the previous paragraph we have shown that for $a = -\frac{(k+1)\gamma^2}{4}$:
\begin{equation}\label{equation 2.26}
\frac{ M(\gamma, p,-\frac{(k+1)\gamma^2}{4},0)}{M(\gamma,p-\frac{4}{\gamma^2}+k,\frac{k\gamma^2}{4}-2,0)}= f(\gamma)\frac{\Gamma(\frac{4}{\gamma^2}-k-p)\Gamma(-p\frac{\gamma^2}{4}) \Gamma(1-(p-1)\frac{\gamma^2}{4})}{\Gamma(-p)\Gamma(\frac{k\gamma^2}{4}-1) \Gamma(2-(2p+k-1)\frac{\gamma^2}{4})}.
\end{equation}
By the shift equations \eqref{shift equation 1} and \eqref{shift equation 2}:
\begin{footnotesize}
\begin{equation*}
\begin{split}
\frac{ M(\gamma, p-\frac{4}{\gamma^2}+k, \frac{k\gamma^2}{4}-2,0) }{M(\gamma,p-\frac{4}{\gamma^2}+k,-\frac{(k+1)\gamma^2}{4},0)} &= f(\gamma)\prod_{j=0}^1 \frac{\Gamma(j\frac{4}{\gamma^2}+1-p) \Gamma((1+j)\frac{4}{\gamma^2}+2-p)}{ \Gamma((2+j)\frac{4}{\gamma^2}-k+2-2p)} \\
 &\times \prod_{i=0}^{2k}\frac{\Gamma(4-(2p+3k-i-1)\frac{\gamma^2}{4})}{\Gamma(2-(p+2k-i)\frac{\gamma^2}{4}) \Gamma(3-(p+2k-i-1)\frac{\gamma^2}{4})}.
\end{split}
\end{equation*}
\end{footnotesize}
Then by \eqref{shift equation 3},
\begin{footnotesize}
\begin{equation*}
\begin{split}
&\frac{ M(\gamma, p-\frac{4}{\gamma^2}+k, -\frac{(k+1)\gamma^2}{4},0) }{M(\gamma,p-\frac{4}{\gamma^2},-\frac{(k+1)\gamma^2}{4},0)}\\
 &=f(\gamma)\prod_{i=0}^{k-1} \frac{\Gamma(2-(p+k+i+1)\frac{\gamma^2}{4}) \Gamma(2-(p+i)\frac{\gamma^2}{4}) \Gamma(2-(p+1+i)\frac{\gamma^2}{4}) \Gamma(3-(p+k+i)\frac{\gamma^2}{4}) }{\Gamma(4-(2p+k+2i)\frac{\gamma^2}{4}) \Gamma(4-(2p+k+2i+1)\frac{\gamma^2}{4}) },
\end{split}
\end{equation*}
\end{footnotesize}
and the product of the above two equations gives:
\begin{footnotesize}
\begin{equation*}
\begin{split}
\frac{ M(\gamma, p-\frac{4}{\gamma^2}+k, \frac{k\gamma^2}{4}-2,0) }{M(\gamma,p-\frac{4}{\gamma^2},-\frac{(k+1)\gamma^2}{4},0)} &= f(\gamma)\frac{\Gamma(4-(2p+k-1)\frac{\gamma^2}{4})}{\Gamma(3-(p-1)\frac{\gamma^2}{4})\Gamma(3-p\frac{\gamma^2}{4}) \prod_{i=0}^{k-2} (2-(p+1+i)\frac{\gamma^2}{4})} \\
&\times\prod_{j=0}^1 \frac{\Gamma(j\frac{4}{\gamma^2}+1-p) \Gamma((1+j)\frac{4}{\gamma^2}+2-p)}{ \Gamma((2+j)\frac{4}{\gamma^2}-k+2-2p)}.
\end{split}
\end{equation*}
\end{footnotesize}
Combining this relation with the previous shift equations \eqref{equation 2.26}:
\begin{footnotesize}
\begin{equation*}
\begin{split}
&\frac{ M(\gamma, p, -\frac{(k+1)\gamma^2}{4}, 0) }{M(\gamma,p-\frac{4}{\gamma^2},-\frac{(k+1)\gamma^2}{4},0)}\\
 &= f(\gamma)\frac{\Gamma(\frac{4}{\gamma^2}-k-p)\Gamma(-p\frac{\gamma^2}{4}) \Gamma(1-(p-1)\frac{\gamma^2}{4})\Gamma(4-(2p+k-1)\frac{\gamma^2}{4})}{\Gamma(-p)\Gamma(\frac{k\gamma^2}{4}-1) \Gamma(2-(2p+k-1)\frac{\gamma^2}{4})\Gamma(3-(p-1)\frac{\gamma^2}{4})} \\
 &\times \frac{\Gamma(1-p) \Gamma(\frac{4}{\gamma^2}+1-p) \Gamma(\frac{4}{\gamma^2}+2-p) \Gamma(\frac{8}{\gamma^2}+2-p)}{ \Gamma(3-p\frac{\gamma^2}{4})\prod_{i=0}^{k-2} (2-(p+1+i)\frac{\gamma^2}{4}) \Gamma(\frac{8}{\gamma^2}-k+2-2p) \Gamma(\frac{12}{\gamma^2}-k+2-2p)}\\
 &=f(\gamma) \frac{ \Gamma(-p\frac{\gamma^2}{4}) \Gamma(1-(p-1)\frac{\gamma^2}{4}) \Gamma(4-(2p+k-1)\frac{\gamma^2}{4}) \Gamma(1-p)  }{\Gamma(3-p\frac{\gamma^2}{4}) \Gamma(3-(p-1)\frac{\gamma^2}{4}) \Gamma(2-(2p+k-1)\frac{\gamma^2}{4}) \Gamma(-p)  } \\
 &\times \frac{\Gamma(\frac{8}{\gamma^2}+2-p) \Gamma(\frac{4}{\gamma^2}+2-p) \Gamma(\frac{4}{\gamma^2} -k -p ) \Gamma(\frac{4}{\gamma^2} + 1 -p) }{\prod_{i=0}^{k-2} (\frac{8}{\gamma^2}-(p+1+i)) \Gamma( \frac{12}{\gamma^2} -k+2-2p ) \Gamma(\frac{8}{\gamma^2} -k+2-2p)}\\
 &=f(\gamma) \Gamma(\frac{4}{\gamma^2}-p) \frac{\Gamma(\frac{4}{\gamma^2} -k -p ) \Gamma(\frac{4}{\gamma^2} + 1 -p) \Gamma(\frac{8}{\gamma^2} -k+1-p) }{\Gamma( \frac{12}{\gamma^2} -k+1-2p ) \Gamma(\frac{8}{\gamma^2} -k+1-2p)}.
\end{split}
\end{equation*}
\end{footnotesize}
By \eqref{equation M(a,b)}, the same ratio of $M$ can also be written as,
\begin{small}
\begin{align*}
&\frac{M(\gamma,p,-\frac{(k+1)\gamma^2}{4},0)}{  M(\gamma,p-\frac{4}{\gamma^2},-\frac{(k+1)\gamma^2}{4},0)}\\
  &= \frac{C(p)}{C(p -\frac{4}{\gamma^2})} f(\gamma)(\frac{\gamma}{2} )^{ p} \frac{\Gamma(\frac{4}{\gamma^2} -k -p ) \Gamma(\frac{4}{\gamma^2} + 1 -p) \Gamma(\frac{8}{\gamma^2} -k+1-p) }{\Gamma( \frac{12}{\gamma^2} -k+1-2p ) \Gamma(\frac{8}{\gamma^2} -k+1-2p)},
\end{align*}
\end{small}
thus we obtain for $\frac{4}{k+1}<\gamma^2<\frac{4}{k}$:
\begin{equation}\label{shiftc2}
\frac{C(p)}{C(p-\frac{4}{\gamma^2})} = f(\gamma) (\frac{\gamma} {2})^{-p}\Gamma(\frac{4}{\gamma^2}-p).
\end{equation}
This proves the second shift equation \eqref{relation_c2} on $C(p)$. Then for every fixed $\gamma $ such that $\frac{4}{\gamma^2} \notin \mathbb{Q} $ both shift equations \eqref{shiftc} and \eqref{shiftc2} completely determine the value $C(p)$ up to a constant $c_{\gamma}$ of $\gamma$. To see this, take another continuous function $\mathfrak{C}(p)$ that satisfies both shift equations \eqref{shiftc} and \eqref{shiftc2}. Then the ratio $\mathfrak{R}(p):=\frac{C(p)}{\mathfrak{C}(p)}$ is a 1-periodic and $\frac{4}{\gamma^2}$-periodic continuous function.
Combining this with the fact that $\frac{4}{\gamma^2} \notin \mathbb{Q} $ implies that the ratio $\mathfrak{R}(p)$ is constant and $C(p)$ is determined up to a constant $c_\gamma$ of $\gamma$ by the two shift equations on $p$.

The constant $c_{\gamma}$ is then evaluated by choosing $p=0$ and by using the known value $M(\gamma,0,a,b) =1$. Thus we arrive at the formula of Proposition \ref{prop_p}. Finally by the continuity of $\gamma \rightarrow M(\gamma,p,a,b)$, we can extend the formula to the values of $\gamma$ that were left out. This completes the proof of Proposition \ref{prop_p}.

\section{Proof of the differential equations}\label{section DE}
We now move to the proof of Proposition \ref{BPZ}. In order to show that $U(t)$ and $\tilde{U}(t)$ satisfy these differential equations we will need to introduce a regularization procedure. We will work with two small parameters $\delta >0 $ and $\epsilon >0$ which will be sent to $0$ at the appropriate places in the proof. The first parameter $\delta$ controls the cut-off procedure used to smooth $X$. A convenient smoothing procedure can be written by seeing $X$ as the restriction of the centered Gaussian field defined on the disk $\mathbb{D}+(\frac{1}{2},0)$, i.e. the unit disk centered in $(\frac{1}{2},0)$. $X$ still has a covariance given by:
\begin{equation}
\E[X(x)X(y)]=2\ln\frac{1}{\vert x-y\vert}.
\end{equation}
Then for any smooth function  $\theta \in \mathcal{C}^{\infty}([0,\infty),\mathbb{R}_+)$ with support in $[0,1]$ and satisfying $\int_0^{\infty} \theta =  \frac{1}{\pi}$, we write $\theta_{\delta}:=\frac{1}{\delta^2}\theta(\frac{|\cdot|^2}{\delta^2})$ and define the regularized field $X_{\delta}:= X*\theta_{\delta}$.
Similarly we introduce: 
\begin{equation}
\frac{1}{(x)_{\delta}}:=\int_{\mathbb{C}}\int_{\mathbb{C}}\frac{1}{x+y_1+y_2}\theta_{\delta}(y_1)\theta_{\delta}(y_2)d^2y_1d^2y_2.
\end{equation}
This quantity will appear when we take the derivative of $\E[X_{\delta}(x)X_{\delta}(y)]$. Now since we have the singularities $x^a$ and $(1-x)^b$ that appear in $U(t)$ and $\tilde{U}(t)$, we will also need to restrict the integration from $[0,1]$ to the smaller interval $[\epsilon, 1-\epsilon]$ for some small $\epsilon$ that will be sent to $0$. Finally we introduce some more compact notations for various expressions that depend on both $\delta$ and $\epsilon$:
\begin{small}
\begin{align*}
G_{\delta}(x,y) &:=\E[X_{\delta}(x)X_{\delta}(y)] \\
D(x;t) &:=(x-t)^{\frac{\gamma^2}{4} } 
x^{a}(1-x)^{b}  \\
 U_{\epsilon,\delta}(t)  &:=\E[(\int_{\epsilon}^{1-\epsilon}D(x;t)e^{\frac{\gamma}{2}X_{\delta}(x)}dx)^p] \\
 V^{(1)}_{\epsilon,\delta}(x_1;t) &:= \E[(\int_{\epsilon}^{1-\epsilon}D(x;t) e^{\frac{\gamma}{2}X_{\delta}(x)+\frac{\gamma^2}{4}G_{\delta}(x,x_1)}dx)^{p-1}] \\
 V^{(2)}_{\epsilon,\delta}(x_1,x_2;t) &:= \E[(\int_{\epsilon}^{1-\epsilon}D(x;t) e^{\frac{\gamma}{2}X_{\delta}(x)+\frac{\gamma^2}{4}(G_{\delta}(x,x_1)+G_{\delta}(x,x_2))}dx)^{p-2}] \\
 E_{0,\epsilon,\delta}(t) &:=D(\epsilon;t)V^{(1)}_{\epsilon,\delta}(\epsilon;t) \\
  E_{1,\epsilon,\delta} (t) &:= D(1-\epsilon;t)V^{(1)}_{\epsilon,\delta}(1-\epsilon;t).
\end{align*}
\end{small}
The terms $V_{\epsilon, \delta}^{(1)}$ and $V_{\epsilon, \delta}^{(2)}$ will appear when we compute respectively the first and second order derivatives of $U_{\epsilon, \delta}$. The terms $E_{0,\epsilon, \delta}$ and $E_{1,\epsilon, \delta}$ are the boundary terms of the integration by parts performed below. We will also use $U_{\epsilon}(t)$, $ V^{(1)}_{\epsilon}(x_1;t) $, $ V^{(2)}_{\epsilon}(x_1,x_2;t)$, $  E_{0,\epsilon}(t)$, $ E_{1,\epsilon} (t) $ for the limit of the above quantities as $\delta$ goes to $0$.
\begin{proof}
First we prove the equation for $U(t)$. We recall the definition,
\begin{small}
\begin{equation}
U(t) =   \mathbb{E}[ (\int_{0}^{1} (x-t)^{\frac{\gamma^2}{4}}  x^{a}(1-x)^{b} e^{\frac{\gamma}{2} X( x) } d x)^p ],
\end{equation}
\end{small}
and we calculate the derivatives with the help of the Girsanov theorem of appendix \ref{sec_th}:
\begin{small}
\begin{align*}
U_{\epsilon,\delta}'(t)=& p\int_{\epsilon}^{1-\epsilon} dx_1\, \partial_t D(x_1;t) V^{(1)}_{\epsilon,\delta}(x_1;t)\\
=&-p\int_{\epsilon}^{1-\epsilon} dx_1 \partial_{x_1}((x_1-t)^{\frac{\gamma^2}{4}}) x_1^{a}(1-x_1)^{b}V^{(1)}_{\epsilon,\delta}(x_1;t)\\
=&-p \Big( E_{1,\epsilon,\delta}(t)-E_{0,\epsilon,\delta}(t)- \int_{\epsilon}^{1-\epsilon}  dx_1 D(x_1;t)V^{(1)}_{\epsilon,\delta}(x_1;t)(\frac{a}{x_1}-\frac{b}{1-x_1})\\
& - \int_{\epsilon}^{1-\epsilon}  dx_1 D(x_1;t)\,\partial_{x_1} V^{(1)}_{\epsilon,\delta}(x_1;t)\Big).
\end{align*}
\end{small}
We claim that the last term in the sum equals zero. Indeed,
\begin{small}
\begin{align*}
\int_{\epsilon}^{1-\epsilon}  &dx_1 D(x_1;t)\,\partial_{x_1} V^{(1)}_{\epsilon,\delta}(x_1;t)\\
=&(p-1)\frac{\gamma^2}{2}\int_{\epsilon}^{1-\epsilon} \int_{\epsilon}^{1-\epsilon} dx_1 dx_2 \frac{D(x_1;t)D(x_2;t)}{(x_2-x_1)_{\delta}}e^{\frac{\gamma^2}{4}G_{\delta}(x_2,x_1)}
V^{(2)}_{\epsilon,\delta}(x_1,x_2;t)\\
=& 0 \qquad \text{by symmetry.}
\end{align*}
\end{small}
Thus, by sending $\delta$ to $0$, 
\begin{small}
\begin{equation}\label{equation U'}
U_{\epsilon}'(t)=-p \Big( E_{1,\epsilon}(t)-E_{0,\epsilon}(t)- \int_{\epsilon}^{1-\epsilon}  dx_1 D(x_1;t)V^{(1)}_{\epsilon}(x_1;t)(\frac{a}{x_1}-\frac{b}{1-x_1})\Big).
\end{equation}
\end{small}
In the same spirit, we calculate:
\begin{small}
\begin{equation*}
\begin{split}
U_{\epsilon,\delta}''(t) &= \frac{p\gamma^2}{4}\Big[ -\int_{\epsilon}^{1-\epsilon} dx_1\partial_t\big(\frac{D(x_1;t)}{(x_1-t)}\big)V^{(1)}_{\epsilon,\delta}(x_1;t)\\
&+ \frac{(p-1)\gamma^2}{4}  \int_{\epsilon}^{1-\epsilon} dx_1\int_{\epsilon}^{1-\epsilon} dx_2  \frac{D(x_1;t)D(x_2;t)}{(x_1-t)(x_2-t)} e^{\frac{\gamma^2}{4}G_{\delta}(x_2,x_1)}V^{(2)}_{\epsilon,\delta}(x_1,x_2;t) \Big].
\end{split}
\end{equation*}
\end{small}
An integration by parts gives:
\begin{small}
\begin{align*}
-&\int_{\epsilon}^{1-\epsilon} dx_1\partial_t \big(\frac{D(x_1;t)}{(x_1-t)}\big)V^{(1)}_{\epsilon,\delta}(x_1;t)\\
=& \int_{\epsilon}^{1-\epsilon}dx_1 \partial_{x_1}(\frac{(x_1-t)^{\frac{\gamma^2}{4}}}{x_1-t}) x_1^{a}(1-x_1)^{b}V^{(1)}_{\epsilon,\delta}(x_1;t)\\
=&\frac{1}{1-t-\epsilon}E_{1,\epsilon,\delta}(t)+\frac{1}{t-\epsilon}E_{0,\epsilon,\delta}(t)\\ 
&-\int_{\epsilon}^{1-\epsilon}  dx_1 D(x_1;t)V^{(1)}_{\epsilon,\delta}(x_1;t)\frac{1}{x_1-t}(\frac{a}{x_1}-\frac{b}{1-x_1})\\
&-\frac{(p-1)\gamma^2}{2}  \int_{\epsilon}^{1-\epsilon} dx_1\int_{\epsilon}^{1-\epsilon} dx_2   \frac{D(x_1;t)D(x_2;t)}{(x_1-t)(x_2-x_1)_{\delta}}e^{\frac{\gamma^2}{4}G_{\delta}(x_2,x_1)} V^{(2)}_{\epsilon,\delta}(x_1,x_2;t).
\end{align*}
\end{small}
By symmetry of the expression under the exchange of $x_1$ and $x_2$,
\begin{small}
\begin{align*}
&\frac{(p-1)\gamma^2}{2}  \int_{\epsilon}^{1-\epsilon} dx_1\int_{\epsilon}^{1-\epsilon} dx_2   \frac{D(x_1;t)D(x_2;t)}{(x_1-t)(x_2-x_1)_{\delta}}e^{\frac{\gamma^2}{4}G_{\delta}(x_2,x_1)}V^{(2)}_{\epsilon,\delta}(x_1,x_2;t)\\
=&\frac{(p-1)\gamma^2}{4}   \int_{\epsilon}^{1-\epsilon} dx_1\int_{\epsilon}^{1-\epsilon} dx_2   D(x_1;t)D(x_2;t)e^{\frac{\gamma^2}{4}G_{\delta}(x_2,x_1)} \\
& \times (\frac{1}{(x_1-t)(x_2-x_1)_{\delta}} +\frac{1}{(x_2-t)(x_1-x_2)_{\delta}})
V^{(2)}_{\epsilon,\delta}(x_1,x_2;t) \\
=&\frac{(p-1)\gamma^2}{4}  \int_{\epsilon}^{1-\epsilon} dx_1\int_{\epsilon}^{1-\epsilon} dx_2   \frac{D(x_1;t)D(x_2;t)}{(x_1-t)(x_2-t)}\frac{x_2-x_1}{(x_2-x_1)_{\delta}}e^{\frac{\gamma^2}{4}G_{\delta}(x_2,x_1)}V^{(2)}_{\epsilon,\delta}(x_1,x_2;t)  .
\end{align*}
\end{small}
Since $\frac{x_2-x_1}{(x_2-x_1)_{\delta}}  \le c$ for some constant $c>0$ independent of $\delta$, by sending $\delta$ to $0$,
\begin{small}
\begin{align}\label{equation U'' primary}
U_{\epsilon}''(t) =\frac{p\gamma^2}{4} &\Big(\frac{1}{1-t-\epsilon}E_{1,\epsilon}(t)+\frac{1}{t-\epsilon}E_{0,\epsilon}(t) \\
 &-\int_{\epsilon}^{1-\epsilon}  dx_1 D(x_1;t)V^{(1)}_{\epsilon}(x_1;t)\frac{1}{x_1-t}(\frac{a}{x_1}-\frac{b}{1-x_1})\Big). \nonumber
\end{align}
\end{small}
A further calculation shows that,
\begin{small}
\begin{align*}
&\int_{\epsilon}^{1-\epsilon}  dx_1 D(x_1;t)V^{(1)}_{\epsilon}(x_1;t)\frac{1}{x_1-t}(\frac{a}{x_1}-\frac{b}{1-x_1})\\
&=\int_{\epsilon}^{1-\epsilon}  dx_1 D(x_1;t)V^{(1)}_{\epsilon}(x_1;t)\big(\frac{a}{t}(\frac{1}{x_1-t}-\frac{1}{x_1})-\frac{b}{1-t}(\frac{1}{x_1-t}+\frac{1}{1-x_1})\big)\\
&=-\int_{\epsilon}^{1-\epsilon}  dx_1 D(x_1;t)V^{(1)}_{\epsilon}(x_1;t)(\frac{a}{tx_1}+\frac{b}{(1-t)(1-x_1)})-\frac{4}{p\gamma^2}(\frac{a}{t}-\frac{b}{1-t})U_{\epsilon}'(t),
\end{align*}
\end{small}
and as a consequence,
\begin{small}
\begin{equation}\label{equation U''}
\begin{split}
U_{\epsilon}''(t) =&\frac{p\gamma^2}{4} \Big(\frac{1}{1-t-\epsilon}E_{1,\epsilon}(t-\epsilon)+\frac{1}{t}E_{0,\epsilon}(t)\\ 
&\quad \quad+\int_{\epsilon}^{1-\epsilon}  dx_1 D(x_1;t)V^{(1)}_{\epsilon}(x_1;t)(\frac{a}{tx_1}+\frac{b}{(1-t)(1-x_1)})\Big)\\
&+(\frac{a}{t}-\frac{b}{1-t})U_{\epsilon}'(t).
\end{split}
\end{equation}
\end{small}
We can also write $U_{\epsilon,\delta}(t)$ in a similar form, by doing an integration by parts:
\begin{small}
\begin{equation*}
\begin{split}
(1&-t-\epsilon)E_{1,\epsilon,\delta}(t)+(t-\epsilon)E_{0,\epsilon,\delta}(t)\\
&-\int_{\epsilon}^{1-\epsilon} dx_1 D(x_1;t)V^{(1)}_{\epsilon,\delta}(x_1;t)(x_1-t)(\frac{a}{x_1}-\frac{b}{1-x_1})\\
=& (1+\frac{\gamma^2}{4})\int_{\epsilon}^{1-\epsilon} dx_1D(x_1;t)V^{(1)}_{\epsilon,\delta}(x_1;t)\\
&+(p-1)\frac{\gamma^2}{2}\int_{\epsilon}^{1-\epsilon}\int_{\epsilon}^{1-\epsilon} dx_1dx_2 D(x_1;t)D(x_2;t)e^{\frac{\gamma^2}{4}G_{\delta}(x_2,x_1)}\frac{x_1-t}{(x_2-x_1)_{\delta}}V^{(2)}_{\epsilon,\delta}(x_1,x_2;t)\\
=&(1+\frac{\gamma^2}{4})\int_{\epsilon}^{1-\epsilon} dx_1D(x_1;t)V^{(1)}_{\epsilon,\delta}(x_1;t)\\
&-(p-1)\frac{\gamma^2}{4}\int_{\epsilon}^{1-\epsilon} dx_1\int_{\epsilon}^{1-\epsilon} dx_2   D(x_1;t)D(x_2;t)e^{\frac{\gamma^2}{4}G_{\delta}(x_2,x_1)}\frac{x_2-x_1}{(x_2-x_1)_{\delta}}V^{(2)}_{\epsilon,\delta}(x_1,x_2;t).
\end{split}
\end{equation*}
\end{small}
By sending $\delta$ to $0$ and by applying the Girsanov theorem of appendix \ref{sec_th}, we obtain,
\begin{small}
\begin{align*}
-(B+a+b) U_{\epsilon}(t)= &(1-t-\epsilon)E_{1,\epsilon}(t)+(t-\epsilon)E_{0,\epsilon}(t)\\
&-\int_{\epsilon}^{1-\epsilon} dx_1D(x_1;t)V^{(1)}_{\epsilon}(x_1;t)(x_1-t)(\frac{a}{x_1}-\frac{b}{1-x_1}),
\end{align*}
\end{small}
where we recall that $B = -(a+b+1) -(2-p)\frac{\gamma^2}{4}$. We also note that,
\begin{small}
\begin{align*}
\int_{\epsilon}^{1-\epsilon} & dx_1 D(x_1;t)V^{(1)}_{\epsilon}(x_1;t)(x_1-t)(\frac{a}{x_1}-\frac{b}{1-x_1})\\
=&(a+b)U_{\epsilon,\delta}(t)-\int_{\epsilon}^{1-\epsilon} dx_1 D(x_1;t)V^{(1)}_{\epsilon}(x_1;t)(\frac{at}{x_1}+\frac{b(1-t)}{1-x_1}),
\end{align*}
\end{small}
and hence,
\begin{small}
\begin{equation}\label{equation U}
-BU_{\epsilon}(t)=(1-t-\epsilon)E_{1,\epsilon}(t)+(t-\epsilon)E_{0,\epsilon}(t)+\int_{\epsilon}^{1-\epsilon} dx_1 D(x_1;t)V^{(1)}_{\epsilon}(x_1;t)(\frac{at}{x_1}+\frac{b(1-t)}{1-x_1}).
\end{equation}
\end{small}
Combining this with the expressions for $U'_{\epsilon}$ and $U''_{\epsilon}$, equations \eqref{equation U'} and \eqref{equation U''},
\begin{small}
\begin{align*}
U_{\epsilon}'(t)=&-p \Big( E_{1,\epsilon}(t)-E_{0,\epsilon}(t)- \int_{\epsilon}^{1-\epsilon}  dx_1 D(x_1;t)V^{(1)}_{\epsilon}(x_1;t)(\frac{a}{x_1}-\frac{b}{1-x_1})\Big),\\
U_{\epsilon}''(t) =&\frac{p\gamma^2}{4} \Big(\frac{1}{1-t-\epsilon}E_{1,\epsilon}(t)+\frac{1}{t-\epsilon}E_{0,\epsilon}(t)\\
 & \quad \quad+\int_{\epsilon}^{1-\epsilon}  dx_1D(x_1;t)V^{(1)}_{\epsilon}(x_1;t)(\frac{a}{tx_1}+\frac{b}{(1-t)(1-x_1)})\Big)\\
&+(\frac{a}{t}-\frac{b}{1-t})U_{\epsilon}'(t),
\end{align*}
\end{small}
we finally arrive at:
\begin{align}
&t(1-t)U_{\epsilon}''(t)+(C-(A+B+1)t)U_{\epsilon}'(t)-ABU_{\epsilon}(t) \\ 
& \quad \quad \quad =\epsilon(1-\epsilon)\frac{p\gamma^2}{4} (\frac{1}{1-t-\epsilon}E_{1,\epsilon}(t)+\frac{1}{t-\epsilon}E_{0,\epsilon}(t)). \nonumber
\end{align}
From this expression we see that the last thing we need to check is that as $\epsilon$ goes to zero the right hand side of the above expression converges to $0$ in a suitable sense. Indeed we will prove that, for $t$ in a fixed compact set $K \subseteq (-\infty,0)$, $\epsilon E_{1,\epsilon}(t)$ and $\epsilon E_{0,\epsilon}(t)$ converge uniformly to $0$ for a well chosen sequence of $\epsilon$. Let us consider $\epsilon E_{0,\epsilon}(t)$ as $\epsilon E_{1,\epsilon}(t)$ can be treated in a similar fashion: 
\begin{equation*}
\epsilon E_{0,\epsilon}(t) = (\epsilon-t)^{\frac{\gamma^2}{4}}\epsilon^{a+1}(1-\epsilon)^b\E[(\int_{\epsilon}^{1-\epsilon}\frac{(x-t)^{\frac{\gamma^2}{4}}x^a(1-x)^b}{|x-\epsilon|^{\frac{\gamma^2}{2}}}e^{\frac{\gamma}{2}X(x)}dx)^{p-1}].
\end{equation*}
In the following we will discuss three disjoint cases based on the value of $a$. They are $a > -1 + \frac{\gamma^2}{4}$, $ -1 < a \leq -1 + \frac{\gamma^2}{4}$, and $ - 1 - \frac{\gamma^2}{4} < a \leq -1$.\\
\noindent
\textbf{i)} $a>-1+\frac{\gamma^2}{4}$\\
This is the simplest case as we have for $\epsilon$ sufficiently small and for some $c_0>0$,
\begin{align*}
 \epsilon E_{0,\epsilon}(t) &\le c_0 \epsilon^{a+1}(1-\epsilon)^b\E[(\int_{0}^{1}\frac{x^{a}(1-x)^b}{|x-\epsilon|^{\frac{\gamma^2}{2}}}e^{\frac{\gamma}{2}X(x)}dx)^{p-1}]\\
  &\overset{\epsilon \to 0}{\sim} c_0 \epsilon^{a+1} M(\gamma,p,a-\frac{\gamma^2}{2},b),
\end{align*}
which converges to 0 as $ \epsilon \rightarrow 0$ uniformly over $t\in K$.\\
\noindent
\textbf{ii)} $-1< a \le -1+\frac{\gamma^2}{4}$.\\
In this case we have $p-1<1$ and $\epsilon^{a+1} \underset{\epsilon \rightarrow 0}{\longrightarrow} 0$. If $p-1\le 0$, $$\E[(\int_{\epsilon}^{1-\epsilon}\frac{(x-t)^{\frac{\gamma^2}{4}}x^a(1-x)^b}{|x-\epsilon|^{\frac{\gamma^2}{2}}}e^{\frac{\gamma}{2}X(x)}dx)^{p-1}]$$ is uniformly bounded thus it is immediate to obtain the convergence to $0$. Hence it suffices to consider the case $0<p-1 < 1$. We choose $\epsilon_N =\frac{1}{2^N}$. Using the sub-additivity of the function $x\mapsto x^{p-1}$, we have for some $c_0, c' >0$ independent of $K$: 
\begin{align*}
\epsilon_N E_{0,\epsilon_N}(t) \le &c_0 \epsilon_N^{a+1} \E[(\int_{\epsilon_{N}}^{\frac{1}{2}}\frac{x^a}{|x-\epsilon_{N}|^{\frac{\gamma^2}{2}}}e^{\frac{\gamma}{2}X(x)}dx)^{p-1}]+ c' \epsilon_N^{a+1} \\
\le & c_0 \epsilon_N^{a+1} \sum_{n=1}^{N-1}\E[(\int_{\epsilon_{n+1}}^{\epsilon_n}\frac{x^a}{|x-\epsilon_{n+1}|^{\frac{\gamma^2}{2}}}e^{\frac{\gamma}{2}X(x)}dx)^{p-1}]+ c' \epsilon_N^{a+1} .
\end{align*}
Then by the scaling property of GMC,
\begin{align*}
\E[(&\int_{\epsilon_{n+1}}^{\epsilon_n}\frac{x^a}{|x-\epsilon_{n+1}|^{\frac{\gamma^2}{2}}}e^{\frac{\gamma}{2}X(x)}dx)^{p-1}] \\
= &2^{\frac{\gamma^2}{4}(p-1)(p-2)-(a-\frac{\gamma^2}{2}+1)(p-1)}\E[(\int_{\epsilon_{n}}^{\epsilon_{n-1}}\frac{u^a}{|u-\epsilon_{n}|^{\frac{\gamma^2}{2}}}e^{\frac{\gamma}{2}X(u)}du)^{p-1}]\\
=&2^{\frac{\gamma^2}{4}p^2-(\frac{\gamma^2}{4}+a+1)p+a+1}\E[(\int_{\epsilon_{n}}^{\epsilon_{n-1}}\frac{u^a}{|u-\epsilon_{n}|^{\frac{\gamma^2}{2}}}e^{\frac{\gamma}{2}X(u)}du)^{p-1}].
\end{align*}
We can deduce that,
\begin{small}
\begin{align*}
\epsilon_N E_{0,\epsilon_N}(t)  \le \: &c_1 2^{-N(a+1)}2^{(N-1)(\frac{\gamma^2}{4}p^2-(\frac{\gamma^2}{4}+a+1)p+a+1)}\E[(\int_{\frac{1}{4}}^{\frac{1}{2}}\frac{x^{a}}{|x-\frac{1}{4}|^{\frac{\gamma^2}{2}}}e^{\frac{\gamma}{2}X(x)}dx)^{p-1}]\\
 &+c' \epsilon_N^{a+1} \\
\le &  \:c 2^{N(\frac{\gamma^2}{4}p-\frac{\gamma^2}{4}-a-1)p}+c' \epsilon_N^{a+1}  \overset{N \to \infty}{\longrightarrow} 0,
\end{align*}
\end{small}
for some constants $c_1,c,c'>0$. The convergence holds since $p>0$ and $\frac{\gamma^2}{4}p-\frac{\gamma^2}{4}-a-1<0$ (this inequality comes from \eqref{bounds}), and it holds uniformly over $t$ in $K$.\\
\textbf{iii)} $-1-\frac{\gamma^2}{4} <a \le -1$\\
In this case $p-1<0$ so we are always dealing with negative moments. This implies that for $t$ in $K$, we can bound $\epsilon E_{0,\epsilon}(t)$ by,
\begin{equation*}
\epsilon E_{0,\epsilon}(t) \le c_0 \epsilon^{a+1} \E[(\int_{\epsilon}^{\frac{1}{2}} x^{a-\frac{\gamma^2}{2}}e^{\frac{\gamma}{2}X(x)}dx)^{p-1}],
\end{equation*}
simply by restricting the integral over $[\epsilon, 1 -\epsilon]$ to $[\epsilon, 1/2]$. An estimation of the resulting GMC moment is given by lemma \ref{lemma estimation GMC} in appendix \ref{section estimation GMC}. For $\epsilon$ sufficiently small, there exists a constant $c >0$ such that,
\begin{align*}
&\E[(\int_{\epsilon}^{\frac{1}{2}} x^{a-\frac{\gamma^2}{2}}e^{\frac{\gamma}{2}X(x)}dx)^{p-1}]\\
\le  &
\begin{cases}
c \,\epsilon^{(\frac{\gamma}{4}-\frac{1}{\gamma}(a+1))^2}, &1+a+\frac{\gamma^2}{4}-\frac{p\gamma^2}{2} >0\\
c \,\epsilon^{(p-1)(1+a-\frac{\gamma^2}{4})-\frac{(p-1)^2\gamma^2}{4}}, &1+a+\frac{\gamma^2}{4}-\frac{p\gamma^2}{2} \le 0
\end{cases}
\end{align*}
This suffices to show the convergence to $0$ of $\epsilon E_{0,\epsilon}(t)$. 

Indeed, in the first case, a basic inequality shows that $(\frac{\gamma}{4}-\frac{1}{\gamma}(a+1))^2 \ge -(a+1) $ with equality when $-(a+1)=\frac{\gamma^2}{4}$. Since the condition cannot be satisfied, we have the strict inequality. In the second case where $1+a+\frac{\gamma^2}{4}-\frac{p\gamma^2}{2}  \le 0$, we can easily show that under this condition together with the bound \eqref{bounds} for $p$, $(p-1)(1+a-\frac{\gamma^2}{4})-\frac{(p-1)^2\gamma^2}{4}>-(a+1)$. Hence in both cases, 
$\epsilon E_{0,\epsilon}(t) {\longrightarrow}0$, where the convergence is again uniform over $t$ in $K$.

Combining the cases (i), (ii) and (iii), we have proven the differential equation \ref{ODE U} in the weak sense (in the sense of distributions). Since it is a hypoelliptic equation (the dominant operator is a Laplacian) with analytic coefficients, $U(t)$ is analytic and the equation holds in the strong sense.\\
Let us now briefly mention the case of $\tilde{U}(t)$. In a similar manner, we calculate,
\begin{small}
\begin{align*}
-\tilde{B}\tilde{U}(t)=&\frac{4}{\gamma^2}\Big((1-t-\epsilon)\tilde{E}_{1,\epsilon}(t)+(t-\epsilon)\tilde{E}_{0,\epsilon}(t)\\ 
&\quad \quad +\int_{\epsilon}^{1-\epsilon} dx_1 \tilde{D}(x_1;t)\tilde{V}^{(1)}_{\epsilon}(x_1;t)(\frac{at}{x_1}+\frac{b(1-t)}{1-x_1})\Big),\\
\tilde{U}_{\epsilon}'(t)=&-p \Big( \tilde{E}_{1,\epsilon}(t)-\tilde{E}_{0,\epsilon}(t)- \int_{\epsilon}^{1-\epsilon}  dx_1 \tilde{D}(x_1;t)\tilde{V}^{(1)}_{\epsilon}(x_1;t)(\frac{a}{x_1}-\frac{b}{1-x_1})\Big),\\
\tilde{U}_{\epsilon}''(t) =&\frac{4p}{\gamma^2} \Big(\frac{1}{1-t-\epsilon}\tilde{E}_{1,\epsilon}(t)+\frac{1}{t-\epsilon}\tilde{E}_{0,\epsilon}(t)\\ 
&\quad \quad+\int_{\epsilon}^{1-\epsilon}  dx_1 \tilde{D}(x_1;t)\tilde{V}^{(1)}_{\epsilon}(x_1;t)(\frac{a}{tx_1}+\frac{b}{(1-t)(1-x_1)})\Big)\\
&+\frac{4}{\gamma^2}(\frac{a}{t}-\frac{b}{1-t})\tilde{U}_{\epsilon}'(t),
\end{align*}
\end{small}
where $\tilde{D}(x;t) :=(x-t)\,x^{a}(1-x)^{b}$ and where $\tilde{V}^{(1)}_{\epsilon}(x_1;t)$, $\tilde{E}_{0,\epsilon}(t)$, $\tilde{E}_{1,\epsilon}(t)$ are defined as functions of $\tilde{D}(x;t)$, the same as their definitions without the tilde.
We verify easily that,
\begin{align}
t(1-t)\tilde{U}_{\epsilon}''(t)+&(\tilde{C}-(\tilde{A}+\tilde{B}+1)t)\tilde{U}_{\epsilon}'(t)-\tilde{A}\tilde{B}\tilde{U}_{\epsilon}(t) \\
&=\epsilon(1-\epsilon)\frac{p\gamma^2}{4} (\frac{1}{1-t-\epsilon}\tilde{E}_{1,\epsilon}(t)+\frac{1}{t-\epsilon}\tilde{E}_{0,\epsilon}(t)), \nonumber
\end{align}
and the right hand side of the above expression converges again to zero uniformly for $t$ in any compact set of $(- \infty, 0)$, which finishes the proof of the Proposition \ref{BPZ}.
\end{proof}
One may wonder if other differential equations can be obtained for similar observables. If instead of $U(t)$ and $\tilde{U}(t)$ one introduces the more general function
\begin{equation}
t \rightarrow  \mathbb{E}[ (\int_{0}^{1} (x-t)^{\chi}  x^{a}(1-x)^{b} e^{\frac{\gamma}{2} X( x) } d x)^p ]
\end{equation}
for some arbitrary real number $\chi$, then this function will be solution to a second order differential equation if and only if $\chi = \frac{\gamma^2}{4} $ or $\chi =1$ (except for some special cases where ``non-trivial" relations hold for instance for $p=0$). This fact can be obtained by similar computations as the ones performed above. On the other hand conformal field theory predicts that differential equations of any order are expected to be verified by suitable observables although it is not clear to us at this stage what information can be extracted from these higher order differential equations.

\appendix

\section{Proof of the lemmas on GMC}

\subsection{Reminder on some useful theorems}\label{sec_th}

We recall some theorems in probability that we will use without further justification. In the following, $D$ is a compact subset of $\mathbb{R}^d$. 
\begin{theorem}[Girsanov theorem]
Let $(Z(x))_{x\in D}$ be a continuous centered Gaussian process and $Z$ a Gaussian variable which belongs to the $L^2$ closure of the vector space spanned by $(Z(x))_{x\in D}$. Let $F$ be a real continuous bounded function from $\mathcal{C}(D,\mathbb{R})$ to $\mathbb{R}$. Then we have the following identity:
\begin{equation}\label{equation Girsanov}
\E[e^{Z-\frac{\E[Z^2]}{2}}F((Z(x))_{x\in D})] = \E[F((Z(x)+\E[Z(x)Z])_{x\in D})].
\end{equation}
\end{theorem}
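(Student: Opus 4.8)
The plan is to recognize \eqref{equation Girsanov} as the Cameron--Martin shift formula for the Gaussian process $(Z(x))_{x\in D}$: the direction along which we shift is the function $h(x):=\E[Z(x)Z]$, and the density $e^{Z-\frac{\E[Z^2]}{2}}$ is precisely the Radon--Nikodym derivative implementing this shift. I would first reduce the statement to a finite-dimensional Gaussian computation and then extend to arbitrary continuous bounded $F$ by a measure-determining argument. Throughout, write $\mathbb{Q}$ for the tilted measure $d\mathbb{Q}=e^{Z-\frac{\E[Z^2]}{2}}\,d\P$; since $\E[e^{Z-\frac{\E[Z^2]}{2}}]=1$ (the Laplace transform of the centered Gaussian $Z$), $\mathbb{Q}$ is a probability measure, and the claim is that the law of the path $x\mapsto Z(x)$ under $\mathbb{Q}$ coincides with the law of $x\mapsto Z(x)+h(x)$ under $\P$.

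First I would fix finitely many points $x_1,\dots,x_n\in D$ and set $W=(Z(x_1),\dots,Z(x_n))$. Because $Z$ lies in the $L^2$-closure of the span of the process, the vector $(Z,W)$ is jointly centered Gaussian (the $L^2$-closure of a Gaussian space is Gaussian). Writing $m_j=\E[ZZ(x_j)]=h(x_j)$ and $\Sigma=\mathrm{Cov}(W)$, it suffices to verify the identity for the complex exponentials $G(w)=e^{i\langle\xi,w\rangle}$, $\xi\in\mathbb{R}^n$, since these determine all bounded continuous functions by Fourier inversion. Using the characteristic-function formula applied to the Gaussian $Z+i\langle\xi,W\rangle$, one computes
\begin{equation*}
\E\big[e^{Z-\frac{\E[Z^2]}{2}}e^{i\langle\xi,W\rangle}\big]
=e^{i\langle\xi,m\rangle-\frac12\langle\xi,\Sigma\xi\rangle}
=e^{i\langle\xi,m\rangle}\,\E\big[e^{i\langle\xi,W\rangle}\big]
=\E\big[e^{i\langle\xi,W+m\rangle}\big],
\end{equation*}
which is exactly \eqref{equation Girsanov} for cylinder functionals $F(w)=G(w(x_1),\dots,w(x_n))$. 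By Fourier inversion and dominated convergence (both sides are integrals of $G$ against probability measures, hence controlled by $\|G\|_\infty$) the identity then holds for every bounded continuous $G$.

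To pass from cylinder functionals to a general continuous bounded $F:\mathcal{C}(D,\mathbb{R})\to\mathbb{R}$, I would argue at the level of measures. The previous step shows that the pushforward of $Z(\cdot)$ under $\mathbb{Q}$ and of $Z(\cdot)+h$ under $\P$ assign equal values to every cylinder functional, hence agree on all finite-dimensional marginals. Since $\mathcal{C}(D,\mathbb{R})$ carries the Borel $\sigma$-algebra generated by the evaluation maps $w\mapsto w(x)$, the finite-dimensional distributions form a measure-determining class, so the two laws on path space coincide. Integrating $F$ against them yields \eqref{equation Girsanov}.

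The one point requiring care is that the shifted process must again take values in $\mathcal{C}(D,\mathbb{R})$, i.e. that $h(x)=\E[Z(x)Z]$ is continuous in $x$; this follows from $L^2$-continuity of $x\mapsto Z(x)$ together with Cauchy--Schwarz, $|h(x)-h(y)|\le \E[Z^2]^{1/2}\,\E[(Z(x)-Z(y))^2]^{1/2}$. I expect this and the joint-Gaussianity fact for $L^2$-closures to be the only genuinely delicate inputs; the robustness of the Fourier computation is what lets me avoid inverting $\Sigma$, so no non-degeneracy assumption on the covariance is needed.
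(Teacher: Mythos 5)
Your proof is correct, but note that the paper itself does not prove this statement at all: it appears in the appendix under ``Reminder on some useful theorems,'' explicitly recalled as a classical fact to be used ``without further justification,'' so there is no argument in the paper to compare against. Your proposal is the standard Cameron--Martin/tilting argument, and it is complete. The finite-dimensional computation is right: for $(Z,W)$ jointly centered Gaussian, $\E[e^{Z+i\langle\xi,W\rangle}]=e^{\frac12\E[(Z+i\langle\xi,W\rangle)^2]}$ by analytic continuation of the joint Laplace/characteristic transform, which gives exactly your displayed identity, and working with complex exponentials indeed avoids any non-degeneracy assumption on $\Sigma$. You also correctly isolate the two genuinely delicate inputs: that $(Z,Z(x_1),\dots,Z(x_n))$ is jointly Gaussian because $Z$ lies in the $L^2$-closure of the Gaussian space spanned by the process (any finite linear combination is an $L^2$-limit of Gaussians, hence Gaussian), and that the shift $h(x)=\E[Z(x)Z]$ is continuous, so the shifted path stays in $\mathcal{C}(D,\mathbb{R})$; your Cauchy--Schwarz bound works since a.s.\ convergence of Gaussians along $x_n\to x$ upgrades to $L^2$-convergence by Gaussian uniform integrability. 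Two cosmetic remarks: the passage from complex exponentials to bounded continuous cylinder functionals is most cleanly stated as uniqueness of characteristic functions for the two pushforward probability measures (the law of $W$ under $d\mathbb{Q}=e^{Z-\frac{\E[Z^2]}{2}}d\P$ versus the law of $W+m$ under $\P$), rather than via Fourier inversion and dominated convergence, though your route also works; and for $D$ compact metric the identification of the Borel $\sigma$-algebra of $\mathcal{C}(D,\mathbb{R})$ with the cylinder $\sigma$-algebra, which you invoke, is exactly what makes the finite-dimensional marginals measure-determining. It is worth observing that in the body of the paper the theorem is applied to unbounded functionals through truncations such as $\mathbf{1}_{\|X_{\delta}\|_{\infty}\le A}$ followed by limits, which is consistent with the bounded-$F$ statement you proved.
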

When applied to our case, although the log-correlated field $X$ is not a continuous Gaussian process, we can still make the arguments rigorous by using a regularization procedure. Let us illustrate the idea by a simple example that is used in section \ref{section DE}.  We introduce three cut-off parameters, $\delta$ to smooth the log-correlated field $X$, $\epsilon$ to avoid the singularities in $0$ and $1$, and $A>0$ to apply \eqref{equation Girsanov} to a bounded functional $F$. Hence the following computation:
\begin{align*}
&\E[(\int_0^1 x^a(1-x)^b e^{\frac{\gamma}{2}X(x)}dx)^{p}]\\
&=\lim_{\epsilon\to 0}\lim_{\delta\to 0}  \lim_{A \to +\infty}  \int_{\epsilon}^{1-\epsilon} dx_1 \,x_1^a(1-x_1)^b \E\Bigg[  \mathbf{1}_{\|X_{\delta}\|_{\infty} \le A}e^{\frac{\gamma}{2}X_{\delta}(x_1)-\frac{\gamma^2}{8}\E[X_{\delta}(x_1)^2]}\\
&\qquad \qquad \qquad \qquad \qquad \qquad (\int_{\epsilon}^{1-\epsilon} x^a(1-x)^b e^{\frac{\gamma}{2}X_{\delta}(x)-\frac{\gamma^2}{8}\E[X_{\delta}(x)^2]}dx)^{p-1} \Bigg]\\
&\overset{\eqref{equation Girsanov}}{=}\lim_{\epsilon\to 0}\lim_{\delta\to 0}\lim_{A \to +\infty} \int_{\epsilon}^{1-\epsilon} dx_1 \,x_1^a(1-x_1)^b \E\Bigg[ \mathbf{1}_{\|X_{\delta}\|_{\infty} \le A}\\
& \quad \quad \quad \quad\quad  (\int_{\epsilon}^{1-\epsilon}x^a(1-x)^b e^{\frac{\gamma}{2}X_{\delta}(x)+\frac{\gamma^2}{4}\E[X_{\delta}(x)X_{\delta}(x_1)]-\frac{\gamma^2}{8}\E[X_{\delta}(x)^2]}dx)^{p-1} \Bigg]\\
&=\int_{0}^{1} dx_1 \,x_1^a(1-x_1)^b \E[(\int_{0}^{1} \frac{x^a(1-x)^b}{|x_1-x|^{\frac{\gamma^2}{2}}} e^{\frac{\gamma}{2}X(x)}dx)^{p-1}].
\end{align*}
The next theorem is a comparison result due to Kahane \cite{Kah}:
\begin{theorem}[Convexity inequality]\label{theo convex inequality}
Let $(Z_1(x))_{x\in D}$, $(Z_2(x))_{x\in D}$ be two continuous centered Gaussian processes such that for all $x,y\in D$:
\begin{equation*}
\E[Z_1(x)Z_1(y)] \le \E[Z_2(x)Z_2(y)].
\end{equation*}
Then for all convex function (resp. concave) $F$ with at most polynomial growth at infinity, and $\sigma$ a positive finite measure over $D$,
\begin{equation}
\E[F(\int_D e^{Z_1(x)-\frac{1}{2}\E[Z_1(x)^2]} \sigma(dx))] \le (\text{ resp.} \ge \,) \, \E[F(\int_D e^{Z_2(x)-\frac{1}{2}\E[Z_2(x)^2]} \sigma(dx))].
\end{equation}
\end{theorem}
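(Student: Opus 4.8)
The plan is to prove the inequality by Gaussian interpolation between the covariances of $Z_1$ and $Z_2$, after first reducing to a finite-dimensional statement. Since both processes are continuous on the compact set $D$ and $\sigma$ is a finite measure, I would begin by approximating the integrals $\int_D e^{Z_k(x)-\frac12\E[Z_k(x)^2]}\sigma(dx)$ ($k=1,2$) by sums $M_k^{(n)}=\sum_{i=1}^{n}\sigma_i\, e^{Z_k(x_i)-\frac12\E[Z_k(x_i)^2]}$ attached to a partition of $D$ into $n$ cells with representative points $x_i$ and weights $\sigma_i=\sigma(\mathrm{cell}_i)$. By continuity $M_k^{(n)}\to M_k$ almost surely, and the Borell--TIS inequality guarantees that $\sup_{x}Z_k(x)$ has Gaussian tails, so the $M_k^{(n)}$ are dominated uniformly in $n$ by a random variable with finite moments of every order; combined with the polynomial growth of $F$ this provides a dominating integrable bound and lets me pass to the limit at the very end. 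It thus suffices to prove $\E[F(M_1^{(n)})]\le \E[F(M_2^{(n)})]$ for each fixed $n$, using that the covariance inequality holds in particular at the chosen points $x_i,x_j$.

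For the finite-dimensional statement I would first replace $F$ by a $C^2$ convex approximation of polynomial growth (obtained by mollification), the general convex case following by a further limiting argument. Writing $A_{ij}=\E[Z_1(x_i)Z_1(x_j)]$ and $B_{ij}=\E[Z_2(x_i)Z_2(x_j)]$, I take independent copies of the two Gaussian vectors and interpolate through $g(t)=\sqrt{1-t}\,Z_1+\sqrt{t}\,Z_2$, whose law is centered Gaussian with covariance $C(t)=(1-t)A+tB$, so that $\dot C_{ij}=B_{ij}-A_{ij}$. Setting $W_i(t)=\sigma_i e^{g_i(t)-\frac12 C_{ii}(t)}$, $M_t=\sum_i W_i(t)$ and $\phi(t)=\E[F(M_t)]$, the goal becomes to show $\phi'(t)\ge 0$.

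The key computation is to differentiate $\phi$ and apply Gaussian integration by parts (Stein's lemma), in the form of the interpolation identity $\frac{d}{dt}\E[H(g(t))]=\frac12\sum_{i,j}\dot C_{ij}\,\E[\partial_i\partial_j H]$. Since $\partial_{g_i}M_t=W_i$ and $\partial_{g_i}\partial_{g_j}M_t=\delta_{ij}W_i$, this produces the term $\frac12\sum_{i,j}\dot C_{ij}\,\E[F''(M_t)W_iW_j+\delta_{ij}F'(M_t)W_i]$, while the explicit $t$-dependence through the normalization $C_{ii}(t)$ contributes $-\frac12\sum_i\dot C_{ii}\,\E[F'(M_t)W_i]$. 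These two diagonal contributions cancel exactly, leaving
\begin{equation*}
\phi'(t)=\frac12\sum_{i,j}(B_{ij}-A_{ij})\,\E[F''(M_t)\,W_i(t)W_j(t)].
\end{equation*}
Because $B_{ij}\ge A_{ij}$ by hypothesis, $W_i\ge 0$, and $F''\ge 0$ by convexity, every summand is nonnegative, so $\phi'(t)\ge 0$ and hence $\phi(1)\ge\phi(0)$, which is the claimed inequality; the concave case follows by applying the result to $-F$.

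The main obstacle I anticipate is not the interpolation identity itself, which is essentially algebraic once the diagonal cancellation is observed, but the two limiting arguments bracketing it: justifying the exchange of limit and expectation when passing from the discrete sums $M_k^{(n)}$ back to the integrals, and relaxing the $C^2$ hypothesis on $F$ to mere convexity. Both rely on controlling the moments of the approximating sums uniformly in the partition, for which the polynomial growth of $F$ together with the exponential moment bounds for $\sup_{x}Z_k(x)$ supplied by Borell--TIS will be the essential inputs.
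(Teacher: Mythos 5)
The paper does not actually prove this statement: it is quoted as a classical comparison result of Kahane, with only a citation to \cite{Kah}, so there is no internal proof to compare against. Your argument is correct and is essentially the canonical proof of Kahane's inequality from the literature: discretize the integral, mollify $F$ to a smooth convex function, interpolate via $g(t)=\sqrt{1-t}\,Z_1+\sqrt{t}\,Z_2$ with independent copies, and apply Gaussian integration by parts, where the diagonal $F'$ terms produced by the normalization $e^{-\frac{1}{2}C_{ii}(t)}$ cancel exactly against the diagonal terms of the interpolation identity, leaving $\phi'(t)=\frac{1}{2}\sum_{i,j}(B_{ij}-A_{ij})\,\E[F''(M_t)W_iW_j]\ge 0$. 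The bracketing limit arguments you flag (uniform domination of the Riemann sums by $\sigma(D)e^{\sup_x Z_k(x)}$ via Borell--TIS together with the polynomial growth of $F$, and relaxation of the $C^2$ hypothesis by mollification) are exactly the standard way these steps are handled, so your write-up is a faithful reconstruction of the proof in the cited reference.
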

To apply this theorem to log-correlated fields, one needs again to use a regularization procedure. Finally, we provide the Williams decomposition theorem, see for instance \cite{Williams}:
\begin{theorem}\label{theo Williams}
Let $(B_s-vs)_{s\ge 0}$ be a Brownian motion with negative drift, i.e. $v>0$ and let $M = \sup_{s\ge 0}(B_s-vs)$. Then conditionally on $M$ the law of the path $(B_s-vs)_{s\ge 0}$ is given by the joining of two independent paths:\\
1) A Brownian motion $(B^1_s+vs)_{0\le s \le \tau_M}$ with positive drift $v$ run until its hitting time $\tau_M$ of $M$.\\
2) $(M+B^2_t-vt)_{t\ge 0}$ where $(B^2_t-vt)_{t\ge 0}$ is a Brownian motion with negative drift conditioned to stay negative.
\par Moreover, one has the following time reversal property for all $C>0$ (where $\tau_C$ denotes the hitting time of $C$),
\begin{equation}\label{equation 4.3}
(B^1_{\tau_C-s}+v(\tau_C-s)-C)_{0\le s\le \tau_C} \overset{\text{law}}{=} (\tilde{B}_s-vs)_{0\le s \le L_{-C}},
\end{equation}
where $(\tilde{B}_s-vs)_{s\ge 0}$ is a Brownian motion with drift $-v$ conditioned to stay negative and $L_{-C}$ is the last time $(\tilde{B}_s-vs)_{s\ge 0}$ hits $-C$.
\end{theorem}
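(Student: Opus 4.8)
The plan is to prove the decomposition at the \emph{non}-stopping time $\tau_M$ by replacing it with genuine first-passage times and disintegrating over the law of $M$. Since $v>0$ the path $X_s:=B_s-vs$ drifts to $-\infty$, so $M=\sup_{s\ge 0}X_s<\infty$ almost surely and is attained at an almost surely unique time $\tau_M<\infty$. Optional stopping of the exponential martingale $e^{2vX_s}=e^{2vB_s-2v^2 s}$ at the first-passage times gives $\mathbb{P}(M>m)=\mathbb{P}(X\text{ reaches }m)=e^{-2vm}$, so $M$ is exponential with parameter $2v$; I will disintegrate the law of the path with respect to this distribution. The whole difficulty is that $\tau_M$ is not a stopping time, so the strong Markov property is unavailable at $\tau_M$ directly.

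Fix $m>0$ and let $T_m=\inf\{s:X_s=m\}$, a genuine stopping time. The key observation is the set identity
\begin{equation*}
\{M=m\}=\{T_m<\infty\}\cap\{X_{T_m+t}\le m\text{ for all }t\ge 0\},
\end{equation*}
on which $\tau_M=T_m$. By the strong Markov property at $T_m$, on $\{T_m<\infty\}$ the pre-passage path $(X_s)_{0\le s\le T_m}$ and the post-passage path $(X_{T_m+t}-m)_{t\ge 0}$ are independent, the latter being a fresh copy of $X$ started at $0$. Because the extra conditioning defining $\{M=m\}$ constrains only the post-passage path, conditioning on $\{M=m\}$ leaves the law of the pre-passage path equal to that of $(X_s)_{0\le s\le T_m}$ given $\{T_m<\infty\}$, and independent of the post-passage part.

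It then remains to identify the two pieces as Doob $h$-transforms. For the post-maximum piece one conditions $X$ started at $0$ to remain negative forever; this is the $h$-transform of the drift-$(-v)$ Brownian motion by the harmonic function $h(x)=1-e^{2vx}$ (the probability, from $x<0$, of never reaching $0$), which is exactly a drift-$(-v)$ Brownian motion conditioned to stay negative, giving part 2). For the pre-maximum piece one conditions the drift-$(-v)$ Brownian motion to reach $m$; this is the $h$-transform by the scale function $h(x)=e^{2vx}$, and a direct generator computation shows $\tfrac12 f''-vf'$ transforms into $\tfrac12 f''+vf'$, i.e. the conditioned process is a drift-$(+v)$ Brownian motion run until it hits $m$, giving part 1). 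The delicate points are the conditioning on the null event $\{M=m\}$, made rigorous by the disintegration above, and the entrance-from-the-boundary issue for the stay-negative $h$-process started exactly at $0$, which I would treat as the weak limit of the process started at $-\varepsilon$ and conditioned to stay negative as $\varepsilon\to 0$.

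Finally, to establish the time-reversal identity \eqref{equation 4.3} I would match finite-dimensional distributions: both sides are inhomogeneous Markov, so it suffices to identify their transition mechanisms. Writing the drift-$(+v)$ motion as the $e^{2vx}$-transform of the drift-$(-v)$ motion conditioned to reach $C$, reversing time from $\tau_C$ and using the explicit Gaussian transition densities together with the first-passage and last-exit densities produces precisely the transition mechanism of the drift-$(-v)$ motion conditioned to stay negative and run to its last exit time $L_{-C}$ from $-C$; the factor $e^{2vx}$ and the harmonic function $h(x)=1-e^{2vx}$ reappear and match the identifications above. Equivalently this is an instance of the general reversal theorem for transient diffusions run to a last-exit time, applied to the present $h$-transforms. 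The main obstacle throughout remains that $\tau_M$ is not a stopping time; the entire argument is organized around circumventing this through the first-passage times $T_m$ and the correct $h$-transform description of the two conditioned laws.
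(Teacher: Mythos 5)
The paper itself contains no proof of this statement: Theorem \ref{theo Williams} is imported as a classical result with a pointer to Williams' article \cite{Williams}, so your proposal can only be measured against the standard proof. Your skeleton is indeed the standard route, and the individual identifications are correct: $M$ is exponential of parameter $2v$ by optional stopping of the martingale $e^{2vB_s-2v^2s}$; the law of the drift $-v$ path conditioned on $\{T_m<\infty\}$ and run until $T_m$ is the Doob $h$-transform by $h(x)=e^{2vx}$, which a generator computation turns into a drift $+v$ Brownian motion stopped at $m$; the post-maximum law is the $h$-transform by $h(x)=1-e^{2vx}$, with the entrance problem at $0$ correctly handled as the weak limit of the process started at $-\varepsilon$; and the reversal identity \eqref{equation 4.3} is an instance of last-exit (Nagasawa) duality, provable by matching transition mechanisms as you indicate.

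There is, however, a genuine gap at the central step. You pass from the strong Markov property at $T_m$ to the conditional law given $\{M=m\}$ through the set identity $\{M=m\}=\{T_m<\infty\}\cap\{X_{T_m+t}\le m\ \forall t\ge0\}$, asserting that since the extra conditioning ``constrains only the post-passage path'' it leaves the pre-passage law untouched. But both relevant events are null: not only $\{M=m\}$, but also, given $\{T_m<\infty\}$, the post-passage constraint itself, because a drift $-v$ Brownian motion started at $0$ exceeds $0$ immediately almost surely. The factorization you invoke is therefore a $0/0$ statement; the strong Markov property gives independence of the pre- and post-$T_m$ paths but does not by itself produce a regular conditional distribution given $M$. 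The rigorous bridge is a limit over $\{M\in(m,m+\epsilon)\}$ (equivalently an excursion-theoretic or last-exit decomposition), and it is exactly there that the non-stopping-time difficulty you flagged at the outset actually bites: on $\{M\in(m,m+\epsilon)\}$ one has $\tau_M>T_m$ strictly, the pre-maximum path being the pre-$T_m$ path concatenated with the pre-maximum portion of the post-$T_m$ excursion. One must prove this extra piece vanishes as $\epsilon\to0$ — for instance, writing $Y$ for the fresh post-$T_m$ copy and $\tau'$ for its argmax, showing $\P(\tau'>t\mid \sup Y\le\epsilon)\to0$, which follows from the bound $\P(\tau'>t,\,\sup Y\le\epsilon)\le 2v\epsilon\,\E[e^{2vY_t}\mathbf{1}_{\sup_{[0,t]}Y\le\epsilon}]$ and dominated convergence — and only then does disintegration against $\P(M\in dm)=2ve^{-2vm}dm$ identify the conditional laws. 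As written, the disintegration is announced but never carried out, so the proposal is a correct plan rather than a proof; supplying this $\epsilon$-argument is the actual content of the theorem.
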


\subsection{An estimate on GMC}\label{section estimation GMC}
We now move on to the proof of some technical lemmas required in the previous sections. Lemma \ref{lemma estimation GMC} written below will be used in section \ref{section DE} to show that the boundary terms obtained in the derivation of the differential equations converge to $0$. Just like in section \ref{section tail} for $s \ge 0$ we write $X(e^{-s/2}) = B_s+Y(e^{-s/2})$ where $B_s$ is a standard Brownian motion and $Y$ is an independent centered Gaussian field on $\mathbb{C}$ with covariance:
\begin{equation}\label{equation correlation Y}
\E[Y(x)Y(y)]=2\ln \frac{\vert x \vert \lor \vert y \vert}{|x-y|}.
\end{equation}
Denote the GMC measure associated to $Y(e^{-s/2})$ by $\mu_Y(ds):=e^{\frac{\gamma}{2}Y(e^{-s/2})}ds$. The goal of this subsection is to prove the following lemma:
\begin{lemma}\label{lemma estimation GMC}
 For  $q > 0$, $a < - 1 - \frac{\gamma^2}{4}$, and a fixed constant $A>0$, there exists $\epsilon_1<A$ sufficiently small such that for all $\epsilon\le \epsilon_1$,
\begin{equation}
\E[(\int_{\epsilon}^{A} x^{a}e^{\frac{\gamma}{2}X(x)}dx)^{-q}]\le  
\begin{cases}
c  \,\epsilon^{(\frac{\gamma}{4}+\frac{1}{\gamma}(a+1))^2} , & 1+a+\frac{\gamma^2}{4}+\frac{q\gamma^2}{2}  >0, \\
c \, \epsilon^{-q(1+a+\frac{\gamma^2}{4})-\frac{q^2\gamma^2}{4}}, &1+a+\frac{\gamma^2}{4}+\frac{q\gamma^2}{2}  \le 0
\end{cases}
\end{equation}
where $c>0$ is a constant that depends on $A,  \gamma, a$ and $q$.  
\end{lemma}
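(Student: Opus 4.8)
The plan is to pass to the radial (logarithmic) coordinate, reduce the negative moment to a lower bound on an exponential functional of a drifted Brownian motion, and then split into the two regimes according to whether the Gaussian endpoint fluctuation of $B$ or a global barrier event dominates. Substituting $x=e^{-s/2}$ in the radial decomposition $X(e^{-s/2})=B_s+Y(e^{-s/2})$ and absorbing the Wick normalisation, I would write, with $\lambda:=1+a+\tfrac{\gamma^2}{4}<0$, $T:=2\ln\tfrac1\epsilon$ and $s_0:=-2\ln A$,
\[
\int_\epsilon^A x^a e^{\frac\gamma2 X(x)}\,dx=\tfrac12\int_{s_0}^{T}e^{\frac\gamma2 B_s-\frac\lambda2 s}\,\mu_Y(ds)=\tfrac12\int_{s_0}^{T}e^{\frac\gamma2(B_s+\nu s)}\,\mu_Y(ds)=:I,
\]
where $\nu:=-\lambda/\gamma=-\tfrac{a+1}\gamma-\tfrac\gamma4>0$, so that the first exponent in the lemma equals $\bigl(\tfrac\gamma4+\tfrac1\gamma(a+1)\bigr)^2=\nu^2$. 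Since we bound a $(-q)$-th moment, it suffices to produce lower bounds on $I$; and restricting the integral to any subregion is legitimate. The threshold separating the two stated cases is exactly $q^{\ast}:=-2\lambda/\gamma^2=2\nu/\gamma$, because the condition $1+a+\tfrac{\gamma^2}4+\tfrac{q\gamma^2}2>0$ reads $q>q^{\ast}$.

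For the case $q\le q^{\ast}$ (i.e. $1+a+\tfrac{\gamma^2}4+\tfrac{q\gamma^2}2\le 0$) I would keep only the scales $s\in[T-1,T]$, use $e^{-\frac\lambda2 s}\ge e^{-\frac\lambda2(T-1)}$ there, and factor out the endpoint value, giving $I\ge c\,e^{-\frac\lambda2(T-1)}e^{\frac\gamma2 B_{T-1}}W$ with $W:=\int_{T-1}^{T}e^{\frac\gamma2(B_s-B_{T-1})}\mu_Y(ds)$. The increment $B_{T-1}$ is independent of $W$ (independent increments of $B$, and $Y\perp B$), and $W$ has negative moments bounded uniformly in $T$; here I would invoke Kahane's inequality (Theorem~\ref{theo convex inequality}) to compare the small-scale field $Y$ with an exactly translation invariant reference field for which this uniformity is transparent. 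Then $\E[I^{-q}]\le C\,e^{\frac{q\lambda}2 T}\,\E[e^{-\frac{q\gamma}2 B_{T-1}}]=C\,e^{(\frac{q\lambda}2+\frac{q^2\gamma^2}8)T}=C\,\epsilon^{-q(1+a+\frac{\gamma^2}4)-\frac{q^2\gamma^2}4}$, which is the second bound. This estimate is in fact valid for all $q>0$, but is sharp only in this regime.

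For the case $q>q^{\ast}$ the endpoint bound is too weak, and one must exploit that $I$ is governed by the maximum $M_T:=\max_{s\in[s_0,T]}(B_s+\nu s)$. I would establish a lower bound $I\ge c\,e^{\frac\gamma2 M_T}R$, where $R$ is the $\mu_Y$-mass of a neighbourhood of the argmax on which $B_s+\nu s\ge M_T-1$; conditioning on $B$ and using that $Y$ is independent and asymptotically stationary in $s$ (Kahane again) gives $\sup_T\E[R^{-q}\mid B]<\infty$, whence $\E[I^{-q}]\le C\,\E[e^{-\frac{q\gamma}2 M_T}]$. I would then use the Gaussian barrier estimate $\P(M_T\le\ell)\le\P(B_T+\nu T\le\ell)\le e^{-(\nu T-\ell)^2/2T}$ for $0\le\ell\le\nu T$ inside
\[
\E[e^{-\frac{q\gamma}2 M_T}]=\int_0^\infty\tfrac{q\gamma}2 e^{-\frac{q\gamma}2\ell}\,\P(M_T\le\ell)\,d\ell,
\]
together with the elementary inequality $(\nu T-\ell)^2/2T\ge \nu^2T/2-\nu\ell$. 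Since $q>q^{\ast}$ forces $\tfrac{q\gamma}2-\nu>0$, the resulting $\ell$-integral $\int_0^\infty e^{(\nu-\frac{q\gamma}2)\ell}\,d\ell$ converges with \emph{no} logarithmic loss, and the contribution of $\ell>\nu T$ is even smaller, yielding $\E[e^{-\frac{q\gamma}2 M_T}]\le C\,e^{-\nu^2 T/2}=C\,\epsilon^{\nu^2}$, which is the first bound.

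The main obstacle is the lower bound $I\ge c\,e^{\frac\gamma2 M_T}R$ of the third paragraph: making rigorous that, near its maximum, the drifted Brownian motion stays within $O(1)$ of $M_T$ over a time interval carrying $\mu_Y$-mass whose negative moments are bounded uniformly in $\epsilon=e^{-T/2}$ and in the random location of the argmax. This is precisely where the Williams decomposition (Theorem~\ref{theo Williams}) enters, describing the law of the path around its maximum as the concatenation of a positively drifted motion run up to its hitting time of $M_T$ and an independent negatively drifted motion conditioned to stay negative; Kahane's inequality then controls the independent field $Y$. The delicate point is that these pre- and post-maximum pieces are not stationary, so their interaction with the GMC mass must be estimated uniformly in the position of the maximum, and the harmless $O(1)$ shift coming from the fixed endpoint $s_0$ and from boundary contributions must be absorbed.
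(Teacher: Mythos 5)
Your reduction to the drifted-Brownian form is exactly the paper's (its Lemma \ref{lemma estimation r}, with your $\nu$ equal to the paper's $\alpha$), your identification of the threshold $q^{\ast}=2\nu/\gamma$ is correct, and your treatment of the regime $1+a+\frac{\gamma^2}{4}+\frac{q\gamma^2}{2}\le 0$ coincides with the paper's proof of that case, which is precisely the endpoint factorization $\E[(\int_{r_0}^{r}e^{\frac{\gamma}{2}y_s}\mu_Y(ds))^{-q}]\le \E[e^{-\frac{q\gamma}{2}y_{r-1}}]\,\E[I(r-1)^{-q}]$ together with stationarity of $\mu_Y$ and of the Brownian increments. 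Your Laplace-transform computation in the other regime (the barrier bound $\P(M_T\le\ell)\le e^{-(\nu T-\ell)^2/2T}$, the inequality $(\nu T-\ell)^2/2T\ge \nu^2T/2-\nu\ell$, and the convergent $\ell$-integral since $\frac{q\gamma}{2}-\nu>0$) is also correct as far as it goes.

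The genuine gap is the step you yourself flag as the main obstacle, and it is not a technicality that Theorem \ref{theo Williams} can absorb. The claim ``$\sup_T\E[R^{-q}\mid B]<\infty$'' is false as stated: conditionally on $B$, the set $\{s:\,y_s\ge M_T-1\}$ around the argmax is deterministic and its Lebesgue measure can be arbitrarily small (with positive probability the path drops by $1$ within time $\delta$ of the argmax on both sides, for every $\delta>0$), and negative moments of the $\mu_Y$-mass of an interval blow up as its length shrinks; hence $\E[R^{-q}\mid B]$ is an unbounded functional of $B$, correlated with $M_T$, and cannot be replaced by a constant to yield $\E[I^{-q}]\le C\,\E[e^{-\frac{q\gamma}{2}M_T}]$. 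Moreover the repair tool you invoke does not apply: Theorem \ref{theo Williams} decomposes a Brownian motion with \emph{negative} drift on an \emph{infinite} horizon around its a.s.\ finite maximum, whereas your process $B_s+\nu s$ has positive drift on the finite window $[s_0,T]$, its maximum is typically of order $\nu T$ and attained near the right endpoint, and the contribution you must control comes from the rare barrier event $\{M_T=O(1)\}$; no path decomposition of the quoted form holds there. The paper avoids the argmax altogether: it decomposes over slabs $\{\sup_{s\in[0,r]}y_s\in[\beta-1,\beta]\}$ (Lemma \ref{lemma J}), works at the \emph{first hitting time} $T_\beta$ of the level $\beta-1$ so that the strong Markov property applies, lower-bounds the mass by that of the deterministic-length interval $[T_\beta,T_\beta+1]$ whose conditional negative moments are bounded by Lemma \ref{lemma I} in terms of the endpoint increment alone, and controls the post-$(T_\beta+1)$ barrier probability by Lemma \ref{lemma sup}; summing the resulting bound $c_2e^{-\frac{\alpha^2}{2}r}e^{(\alpha-\frac{q\gamma}{2})\beta}$ over integer $\beta$ converges exactly because $\alpha-\frac{q\gamma}{2}<0$, i.e.\ in your regime $q>q^{\ast}$. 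To complete your argument you would have to replace the argmax/Williams step by such a hitting-time construction (or prove a new decomposition for the conditioned positive-drift path, without incurring a factor $T=2\ln\frac{1}{\epsilon}$, which a naive union bound over unit intervals would produce); as written, the first case of the lemma is not established.
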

By using the decomposition described above, we can transform this lemma into another equivalent form,
\begin{equation*}
\begin{split}
  \E[(\int_{\epsilon}^{A} x^{a}e^{\frac{\gamma}{2}X(x)}dx)^{-q}]=& 2^{q} \E[(\int_{-2\ln A}^{-2\ln \epsilon}e^{\frac{\gamma}{2}(B_s-s(\frac{\gamma}{4}+\frac{1}{\gamma}(a+1)))}\mu_Y(ds))^{-q}]\\
=&2^{q} \E[(\int_{-2\ln A}^{-2\ln \epsilon}e^{\frac{\gamma}{2}(B_s+\alpha s)}\mu_Y(ds))^{-q}],
\end{split}
\end{equation*}
where again $(B_s)_{s\ge 0}$ is a standard Brownian motion independent from $Y$, and $\alpha = -\frac{\gamma}{4}-\frac{1}{\gamma}(a+1)$.
Therefore lemma \ref{lemma estimation GMC} is equivalent to the following lemma:
\begin{lemma}\label{lemma estimation r}
 For $q>0$, $\alpha>0$, a fixed constant $r_0$, there exists $r_1>r_0$ sufficiently large such that for all $r\ge r_1$,
\begin{equation}
\E[(\int_{r_0}^{r}e^{\frac{\gamma}{2}(B_s+\alpha s)}\mu_Y(ds))^{-q}] \le 
\begin{cases}
c\, e^{-\frac{\alpha^2}{2}r}, &\alpha-\frac{q\gamma}{2} < 0\\
c\, e^{(\frac{q^2\gamma^2}{8}-\frac{q\gamma \alpha}{2})r},  &\alpha-\frac{q\gamma}{2} \ge 0
\end{cases}
\end{equation}
where $c>0$ is a constant that depends on $r_0, \gamma, \alpha$ and $q$.
\end{lemma}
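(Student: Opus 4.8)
The plan is to integrate out the field $Y$ by conditioning on the Brownian motion $B$, thereby reducing the statement to a Laplace-transform estimate for a Brownian functional, and then to treat the two regimes by exploiting the maximum of $B_s+\alpha s$. Write $W_s:=B_s+\alpha s$. First I would observe that the process $s\mapsto Y(e^{-s/2})$ is stationary: from \eqref{equation correlation Y} one computes $\E[Y(e^{-s/2})Y(e^{-t/2})]=-2\ln(1-e^{-|t-s|/2})$, which depends only on $|t-s|$. Hence $\mu_Y$ is a stationary random measure and, by the standard finiteness of negative moments of GMC on a bounded interval, $c_q:=\E_Y[\mu_Y([k,k+1])^{-q}]=\E_Y[\mu_Y([0,1])^{-q}]<\infty$ for every cell, independently of its position. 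Bounding the integral below by its restriction to a single unit cell $J\subseteq[r_0,r]$ (whose position we may let depend on $B$) gives $\int_{r_0}^r e^{\frac{\gamma}{2}W_s}\mu_Y(ds)\ge e^{\frac{\gamma}{2}\inf_J W}\mu_Y(J)$, so that taking $\E_Y$ first and using stationarity,
\[
\E\Big[\Big(\int_{r_0}^{r} e^{\frac{\gamma}{2}W_s}\mu_Y(ds)\Big)^{-q}\Big]\le c_q\,\E_B\big[e^{-\frac{q\gamma}{2}\inf_{s\in J}W_s}\big].
\]

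In the regime $\alpha\ge \frac{q\gamma}{2}$ I would take $J$ to be the fixed end cell $[r-1,r]$. Then $\inf_{[r-1,r]}W=\alpha(r-1)+B_{r-1}+\inf_{0\le u\le 1}(B_{r-1+u}-B_{r-1})$, where the increment infimum is independent of $B_{r-1}$ and has finite exponential moments; hence $\E_B[e^{-\frac{q\gamma}{2}\inf_{[r-1,r]}W}]\le C\,e^{-\frac{q\gamma\alpha}{2}(r-1)}\E[e^{-\frac{q\gamma}{2}B_{r-1}}]=C'\,e^{(\frac{q^2\gamma^2}{8}-\frac{q\gamma\alpha}{2})r}$, which is the second bound. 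This part is entirely elementary Gaussian computation.

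In the regime $\alpha<\frac{q\gamma}{2}$ the end cell is not sharp, and I would instead place $J$ around the (a.s.\ unique) location $s_*$ of the maximum $M:=\sup_{[r_0,r]}W$. Here the Williams decomposition of Theorem~\ref{theo Williams} is the key tool: it describes the law of $W$ near its maximum as independent pieces whose laws do not depend on $r$, so that the oscillation $M-\inf_J W$ is independent of $M$ and satisfies $\E[e^{\frac{q\gamma}{2}(M-\inf_J W)}]\le C_0<\infty$ uniformly in $r$. This reduces matters to bounding $\E_B[e^{-\frac{q\gamma}{2}M}]$, which I would handle by a single Cameron--Martin/Girsanov tilt removing the drift $\alpha$ (appendix~\ref{sec_th}): under the tilted measure $\mathbb{Q}$ the process $\tilde B:=W$ is a standard Brownian motion and $B_r=\tilde B_r-\alpha r$, whence
\[
\E_B\big[e^{-\frac{q\gamma}{2}M}\big]=e^{-\frac{\alpha^2}{2}r}\,\E_{\mathbb{Q}}\big[e^{\alpha\tilde B_r-\frac{q\gamma}{2}\tilde M}\big]\le e^{-\frac{\alpha^2}{2}r},
\]
where $\tilde M:=\sup_{[r_0,r]}\tilde B$ and the final inequality uses $\tilde M\ge\max(\tilde B_r,0)$ together with $\alpha<\frac{q\gamma}{2}$. (The same identity also reproduces the second regime by instead bounding $\tilde M\ge\tilde B_r$ and using $\E_{\mathbb{Q}}[e^{(\alpha-\frac{q\gamma}{2})\tilde B_r}]=e^{\frac12(\alpha-\frac{q\gamma}{2})^2 r}$.)

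The main obstacle is the first regime. The sharp exponent $-\frac{\alpha^2}{2}$ cannot be extracted from any fixed sub-window, because the cheapest Brownian strategy making the integral small keeps $W$ atypically low throughout $[r_0,r]$ rather than only near an endpoint; capturing this requires the \emph{global} drift-removing tilt above, and, crucially, an $r$-uniform control of the GMC mass sitting near the running maximum. It is precisely this last point that forces the use of the Williams decomposition, which supplies the regularity of $B$ around its maximum with an honest, $r$-independent, exponentially integrable oscillation bound; once this is in place the remaining estimates are the elementary Gaussian computation and the standard finiteness of GMC negative moments invoked above.
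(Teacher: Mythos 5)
Your treatment of the regime $\alpha \ge \frac{q\gamma}{2}$ (restriction to the end cell $[r-1,r]$ plus an elementary Gaussian computation) is exactly the paper's argument, and your Girsanov-tilt identity $\E_B[e^{-\frac{q\gamma}{2}M}] = e^{-\frac{\alpha^2}{2}r}\,\E_{\mathbb{Q}}[e^{\alpha\tilde B_r - \frac{q\gamma}{2}\tilde M}] \le C e^{-\frac{\alpha^2}{2}r}$ is correct (modulo an $r_0$-dependent constant, since $\sup_{[r_0,r]}\tilde B$ need not be nonnegative when $r_0>0$). The genuine gap is in the reduction of the regime $\alpha<\frac{q\gamma}{2}$ to this quantity. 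You restrict the GMC integral to a unit cell $J$ around the argmax $s_*$ of $W$ and assert, citing Theorem \ref{theo Williams}, that the oscillation $M-\inf_J W$ is independent of $M$ with exponential moments uniform in $r$. But Theorem \ref{theo Williams} concerns a Brownian motion with \emph{negative} drift on the \emph{infinite} half-line, decomposed at its a.s.\ finite global maximum; your process $W_s=B_s+\alpha s$ has \emph{positive} drift and is run on the finite window $[r_0,r]$, and the event driving the negative moment is precisely that its maximum is atypically low. The decomposition at the argmax of such a finite-horizon path involves meander-type conditioned processes whose laws depend on $r$ and on the location of $s_*$; moreover the argmax is not a stopping time, so no Markov property is available. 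The claimed independence and $r$-uniformity are exactly the non-trivial content of the lemma and are not established. This is not a cosmetic issue: without conditional control of the oscillation given $M$, you must decouple $e^{-\frac{q\gamma}{2}M}$ from $e^{\frac{q\gamma}{2}(M-\inf_J W)}$ by H\"older's inequality, and any H\"older exponent $p>1$ degrades the bound to $e^{-\frac{\alpha^2}{2p}r}$, strictly weaker than the stated $e^{-\frac{\alpha^2}{2}r}$.

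The paper circumvents precisely this difficulty by never conditioning on the argmax. It decomposes according to the \emph{level} of the maximum, over events $\{\sup_{s\in[0,r]} y_s \in [n-1,n]\}$, and on each such event works with the first hitting time $T_n$ of the level $n-1$, which \emph{is} a stopping time. The strong Markov property at $T_n$, combined with a unit-cell GMC estimate (Lemma \ref{lemma I}, which bounds $\E[I(t)^{-q}\mid y_{t+1}-y_t]$ almost surely) and the tail bound of Lemma \ref{lemma sup} for the supremum, yields $J_{r,n}\le c_2\, e^{-\frac{\alpha^2}{2}r}e^{(\alpha-\frac{q\gamma}{2})n}$ (Lemma \ref{lemma J}), which is summable over $n$ exactly when $\alpha<\frac{q\gamma}{2}$; the terms with $T_n$ occurring in the last unit of time are handled separately. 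If you wish to salvage your route, replace the argmax cell by the cell $[T_n,T_n+1]$ on the event $\{\sup \in [n-1,n]\}$: on that event the cell sits within distance $1$ of the maximum, the strong Markov property applies, and your tilting computation then reappears, in integrated form, as the summation $\sum_n e^{\alpha n-\frac{\alpha^2}{2}r}e^{-\frac{q\gamma}{2}n}$.
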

A similar result for 2d GMC has been proved in \cite{DOZZ1} (proposition 5.1). A slight difference is that in \cite{DOZZ1} the power $q$ depends on $a$.
\par  We start by proving three intermediate results. We denote $y_s = B_s+\alpha s$, and we introduce for $\beta\ge 1$ the stopping time $T_{\beta} = \inf \{s\ge 0, y_s = \beta-1\}$. Recall the density of $T_{\beta}$ for $\beta >1, u>0$: 
\begin{equation}
 \P(T_{\beta}\in (u,u+du)) = \frac{\beta-1}{\sqrt{2\pi}u^{3/2}}e^{-\frac{(\beta-1-\alpha u)^2}{2u}}du.
\end{equation}

\begin{lemma}\label{lemma sup}
For $\alpha,A>0$, we have:
\begin{equation}
\P(\sup_{s\le t}y_s\le A) \le e^{\alpha A-\frac{\alpha^2 t}{2}}.
\end{equation}
\end{lemma}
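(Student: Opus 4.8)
The plan is to discard the running-maximum information through the trivial inclusion $\{\sup_{s\le t}y_s\le A\}\subseteq\{y_t\le A\}$ and then control the one-dimensional marginal $\P(y_t\le A)$ by an exponential Chernoff estimate. Since $t$ itself belongs to the interval $[0,t]$, the condition $y_s\le A$ for all $s\le t$ forces in particular $y_t\le A$, so that
\[
\P(\sup_{s\le t}y_s\le A)\le \P(y_t\le A).
\]
It may look wasteful to throw away the supremum, but the resulting marginal bound already produces exactly the desired right-hand side, so nothing is lost.

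First I would rewrite the event $\{y_t\le A\}$ as $\{e^{-\alpha y_t}\ge e^{-\alpha A}\}$, which is legitimate because $\alpha>0$ and $x\mapsto e^{-\alpha x}$ is decreasing. Markov's inequality then gives
\[
\P(y_t\le A)=\P(e^{-\alpha y_t}\ge e^{-\alpha A})\le e^{\alpha A}\,\E[e^{-\alpha y_t}].
\]
The remaining step is the explicit computation of the exponential moment. Recalling $y_s=B_s+\alpha s$ with $B$ a standard Brownian motion, and using $\E[e^{\theta B_t}]=e^{\theta^2 t/2}$ with $\theta=-\alpha$, one finds
\[
\E[e^{-\alpha y_t}]=e^{-\alpha^2 t}\,\E[e^{-\alpha B_t}]=e^{-\alpha^2 t}\,e^{\frac{\alpha^2 t}{2}}=e^{-\frac{\alpha^2 t}{2}}.
\]
Combining the last three displays yields $\P(\sup_{s\le t}y_s\le A)\le e^{\alpha A-\frac{\alpha^2 t}{2}}$, which is the claim.

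There is essentially no obstacle here: the only point requiring a small amount of care is the choice of exponential tilt. Taking the weight $e^{-\alpha y_t}$ with exponent exactly $\alpha$ is what makes the deterministic drift contribution $-\alpha^2 t$ combine with the Gaussian fluctuation $\frac{\alpha^2 t}{2}$ to produce the clean rate $-\frac{\alpha^2 t}{2}$; any other choice of tilt parameter would give a strictly weaker exponent. I would note in passing that the same bound also follows from a Girsanov change of measure removing the drift of $y$, turning it into a standard Brownian motion $W$ under the new measure and transferring the density $e^{\alpha W_t-\alpha^2 t/2}$ onto the indicator, after which one bounds $e^{\alpha W_t}\le e^{\alpha A}$ on the event; but the one-line Chernoff argument above is more direct and is all that is needed for the applications to Lemma \ref{lemma estimation r}.
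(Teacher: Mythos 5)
Your proof is correct, but it takes a genuinely different route from the paper's. The paper works with the supremum exactly: it writes $\P(\sup_{s\le t}y_s\le A)=\P(T_{A+1}\ge t)$, where $T_{A+1}$ is the first hitting time of level $A$ by the drifted motion, inserts the explicit density of that hitting time, extracts the factor $e^{\alpha A-\frac{\alpha^2 s}{2}}$ from the Gaussian exponent (bounding $e^{-\alpha^2 s/2}\le e^{-\alpha^2 t/2}$ for $s\ge t$), and recognizes the remaining integral $\frac{A}{\sqrt{2\pi}}\int_0^{\infty}s^{-3/2}e^{-A^2/(2s)}\,ds=1$ as the total mass of the driftless hitting-time density. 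You instead discard the path constraint via the inclusion $\{\sup_{s\le t}y_s\le A\}\subseteq\{y_t\le A\}$ and run a Chernoff bound with tilt $\alpha$ on the endpoint marginal, where the drift contribution $e^{-\alpha^2 t}$ and the Gaussian moment $e^{\alpha^2 t/2}$ combine to give exactly $e^{\alpha A-\frac{\alpha^2 t}{2}}$. The two losses (yours in dropping the supremum, the paper's in freezing the exponent inside the integral) occur at different places but yield the identical final estimate, which is all that the application to Lemma \ref{lemma estimation r} requires. Your argument is more elementary, needing only Markov's inequality and the Gaussian moment generating function rather than the hitting-time density; the paper's computation starts from an exact identity for the sup-probability, so it would retain the extra factor $e^{-A^2/(2s)}$ if a sharper bound were ever needed, but for the stated inequality nothing is gained.
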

\begin{proof}
We know the density of $\sup_{s\le t}y_s$:
\begin{align*}
\P(\sup_{s\le t}y_s\le A) =\P(T_{A+1}\ge t)&= \frac{A}{\sqrt{2\pi}}\int_t^{\infty}\frac{e^{-\frac{(A-\alpha s)^2}{2s}}}{s^{3/2}}ds\\
 &\le \frac{A e^{\alpha A-\frac{\alpha^2 t}{2}}}{\sqrt{2\pi}}\int_0^{\infty}\frac{e^{-\frac{A^2}{2s}}}{s^{3/2}}ds =e^{\alpha A-\frac{\alpha^2 t}{2}}.
\end{align*}
\end{proof}
\begin{lemma}\label{lemma I}
We set  for $t>0$:
\begin{equation}
I(t) = \int_{t}^{t+1}e^{\frac{\gamma}{2}(y_s-y_t)}\mu_Y(ds).
\end{equation}
For $q>0$, we have the following inequality,
\begin{equation}
\E[I(t)^{-q}|y_{t+1}-y_t]\le c_1(e^{-\frac{\gamma}{2}q(y_{t+1}-y_t)}+1)\quad a.s.,
\end{equation}
where $c_1$ depends on $\gamma,q$.
\end{lemma}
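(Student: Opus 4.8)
The plan is to reduce the desired bound, via conditioning on the increment $\Delta := y_{t+1}-y_t$ together with a Brownian-bridge decomposition, to the finiteness of a single $t$-independent negative moment of a GMC integral. The point is that once we condition on $\Delta$, the only remaining $\Delta$-dependence sits in a deterministic exponential factor that can be bounded below uniformly on $[t,t+1]$, producing exactly the two terms on the right-hand side.

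First I would reduce to $t=0$. As computed in the setup, $\E[Y(e^{-s/2})Y(e^{-s'/2})]$ depends only on $s'-s$, so the process $s\mapsto Y(e^{-s/2})$ and hence the random measure $\mu_Y$ are stationary; the increments of $y_s=B_s+\alpha s$ are stationary as well. Consequently the conditional law of $I(t)$ given $y_{t+1}-y_t$ does not depend on $t$, and it suffices to bound $\E[I(0)^{-q}\mid y_1]$ (recall $y_0=B_0=0$). Next I would condition on $y_1=\Delta$, i.e. on $B_1=\Delta-\alpha$. Writing the standard Brownian bridge $\beta_s:=B_s-sB_1$, which is independent of $B_1$ and of $Y$, one has $B_s=\beta_s+s(\Delta-\alpha)$, so that $y_s=\beta_s+s\Delta$: the drift $\alpha$ cancels. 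Therefore
\begin{equation*}
I(0)\,\big|\,\{y_1=\Delta\}\ \overset{d}{=}\ \int_0^1 e^{\frac{\gamma}{2}\beta_s}\,e^{\frac{\gamma}{2}s\Delta}\,\mu_Y(ds),
\end{equation*}
with $\beta$ a standard bridge independent of $\mu_Y$.

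Then I would bound the deterministic factor. For every $s\in[0,1]$ one has $e^{\frac{\gamma}{2}s\Delta}\ge \min(1,e^{\frac{\gamma}{2}\Delta})=e^{\frac{\gamma}{2}\min(0,\Delta)}$ (the two regimes $\Delta\ge 0$ and $\Delta<0$ give respectively the constant and the exponential term), whence, since $x\mapsto x^{-q}$ is decreasing for $q>0$,
\begin{equation*}
\E[I(0)^{-q}\mid y_1=\Delta]\ \le\ e^{-\frac{\gamma}{2}q\min(0,\Delta)}\,\E\Big[\big(\int_0^1 e^{\frac{\gamma}{2}\beta_s}\mu_Y(ds)\big)^{-q}\Big].
\end{equation*}
Using $e^{-\frac{\gamma}{2}q\min(0,\Delta)}=\max(1,e^{-\frac{\gamma}{2}q\Delta})\le 1+e^{-\frac{\gamma}{2}q\Delta}$ gives the claimed inequality with $c_1:=\E[(\int_0^1 e^{\frac{\gamma}{2}\beta_s}\mu_Y(ds))^{-q}]$, which by construction depends only on $\gamma$ and $q$.

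It remains to check $c_1<\infty$. By independence of $\beta$ and $\mu_Y$ and the bound $\int_0^1 e^{\frac{\gamma}{2}\beta_s}\mu_Y(ds)\ge e^{\frac{\gamma}{2}\min_{[0,1]}\beta}\,\mu_Y([0,1])$, one gets $c_1\le \E[e^{-\frac{\gamma}{2}q\min_{[0,1]}\beta}]\,\E[\mu_Y([0,1])^{-q}]$; the first factor is finite because $-\min_{[0,1]}\beta$ has Gaussian tails and hence finite exponential moments of all orders, and the second is the standard finiteness of negative moments of the total mass of a GMC measure. I expect the main obstacle to be purely a matter of rigor rather than of substance: namely justifying the bridge conditioning at the level of conditional expectations (a clean disintegration) and correctly invoking stationarity to obtain a constant $c_1$ that is genuinely independent of $t$; the finiteness of the GMC negative moment is known and can simply be cited.
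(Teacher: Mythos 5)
Your proof is correct and follows essentially the same route as the paper's: conditioning on the increment to get a Brownian-bridge-plus-linear-drift representation $y_s-y_t \overset{d}{=} \beta_s + s\Delta$, bounding $e^{\frac{\gamma}{2}s\Delta}\ge e^{\frac{\gamma}{2}\Delta}\wedge 1$, and controlling the remaining constant via $\E[e^{-\frac{\gamma}{2}q\inf_{[0,1]}\beta}]\,\E[\mu_Y([0,1])^{-q}]<\infty$. The only cosmetic difference is that you make the stationarity reduction to $t=0$ explicit, which the paper uses implicitly when shifting the integral to $[0,1]$.
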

\begin{proof}
Conditioning on $y_{t+1}-y_t=y$, $(B_s-B_t)_{t\le s \le t+1}$ has the law of a Brownian bridge between $0$ and $y-\alpha$. Hence it has the law of $(B'_s-sB'_{1} +s(y-\alpha))_{0\le s\le 1}$, where $B'$ is an independent Brownian motion. We have:
\begin{equation*}
\E[I(t)^{-q}|y_{t+1}-y_t=y] = \E[( \int_{0}^{1}e^{\frac{\gamma}{2}(B'_s-s B'_1 +sy)}\mu_Y(ds))^{-q}].
\end{equation*}
Notice that $e^{\frac{\gamma }{2}sy} \ge e^{\frac{\gamma }{2}y} \wedge 1$, and a classic result on the moments of Gaussian multiplicative chaos shows that,
$$\E[(\mu_Y([0,1]))^{-q}] < \infty,$$
thus:
\begin{align*}
\E[( \int_{0}^{1}e^{\frac{\gamma}{2}(B'_s-s B'_1)}\mu_Y(ds))^{-q}] &\le \E[e^{-\frac{q\gamma}{2}\inf_{0\le s \le 1}(B'_s-s B'_1)}] \E[(\mu_Y([0,1]))^{-q}]\\
&=:c_1 <\infty.
\end{align*}
We can now derive that:
\begin{equation*}
\E[I(t)^{-q}|y_{t+1}-y_t=y] \le c_1 (e^{-\frac{\gamma}{2}qy}\lor 1) \le c_1(e^{-\frac{\gamma}{2}qy}+1)\quad a.s.
\end{equation*}
\end{proof}
\begin{lemma}\label{lemma J}
Define for $\beta>1$, $\alpha>0$, $q>0$ and $r\ge 2$:
\begin{equation}
J_{r,\beta}: = \E[\frac{\mathbf{1}_{\{\sup_{s\in [0,r]}y_s \in [\beta -1,\beta]\}}}{(\int_{0}^{r}e^{\frac{\gamma}{2}y_s}\mu_Y(ds))^q}].
\end{equation}
Then there exists $c_2>0$ depending on $\gamma, \alpha, q$ such that:
\begin{equation}
 J_{r,\beta} \le c_2 e^{-\frac{\alpha^2}{2}r} e^{(\alpha-\frac{q\gamma}{2})\beta}.
\end{equation}
\end{lemma}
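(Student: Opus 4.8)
The plan is to exploit the strong Markov property of the drifted Brownian motion $y_s=B_s+\alpha s$ at the first-passage time $T_\beta$ to the level $\beta-1$. On the event $\{\sup_{s\in[0,r]}y_s\in[\beta-1,\beta]\}$ one has $T_\beta\le r$, and since $y_{T_\beta}=\beta-1$ while $y_s<\beta-1$ for $s<T_\beta$, the constraint $\sup_{[0,r]}y\le\beta$ is \emph{equivalent} to $\sup_{s\in[T_\beta,r]}(y_s-y_{T_\beta})\le 1$. First I would bound the chaos mass from below by its restriction to the unit sub-interval $[T_\beta,T_\beta+1]$ (for the moment in the generic case $T_\beta\le r-1$, which $r\ge 2$ makes available), observing that it factors as $\int_{T_\beta}^{T_\beta+1}e^{\frac{\gamma}{2}y_s}\mu_Y(ds)=e^{\frac{\gamma}{2}(\beta-1)}I(T_\beta)$ with $I$ as in Lemma \ref{lemma I}. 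Hence $(\int_0^r e^{\frac{\gamma}{2}y_s}\mu_Y(ds))^{-q}\le e^{-\frac{q\gamma}{2}(\beta-1)}I(T_\beta)^{-q}$, which already isolates the target factor $e^{-\frac{q\gamma}{2}\beta}$.

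Conditioning on $\mathcal{F}_{T_\beta}$, the shifted process $\tilde y_v:=y_{T_\beta+v}-y_{T_\beta}$ is a fresh drifted Brownian motion and, by the stationarity in $s$ of the field $Y$ along the positive axis together with its independence from $B$, the shifted chaos has the standard law; so conditionally on $T_\beta=u$ I must estimate $\E[\mathbf{1}_{\{\sup_{[0,r-u]}\tilde y\le 1\}}\tilde I^{-q}]$ with $\tilde I=\int_0^1 e^{\frac{\gamma}{2}\tilde y_v}\mu_Y(dv)$. The crux is to decouple the long-range constraint from the local chaos moment: I would factor $\mathbf{1}_{\{\sup_{[0,r-u]}\tilde y\le 1\}}=\mathbf{1}_{\{\sup_{[0,1]}\tilde y\le 1\}}\mathbf{1}_{\{\sup_{[1,r-u]}\tilde y\le 1\}}$, condition on the position $\tilde y_1$ (which satisfies $\tilde y_1\le 1$ on the first event), and apply Lemma \ref{lemma sup} to the independent increment after time $1$ to obtain $\E[\mathbf{1}_{\{\sup_{[1,r-u]}\tilde y\le 1\}}\mid\mathcal{F}_1]\le e^{\alpha(1-\tilde y_1)-\frac{\alpha^2}{2}(r-u-1)}$. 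Bounding $\mathbf{1}_{\{\sup_{[0,1]}\le 1\}}\le 1$ and using Lemma \ref{lemma I} in the form $\E[\tilde I^{-q}\mid\tilde y_1]\le c_1(e^{-\frac{q\gamma}{2}\tilde y_1}+1)$ leaves the Gaussian average $\E[e^{-\alpha\tilde y_1}(e^{-\frac{q\gamma}{2}\tilde y_1}+1)]$, a finite constant since $\tilde y_1\sim\mathcal{N}(\alpha,1)$. This gives $\E[\mathbf{1}_{\{\sup_{[0,r-u]}\tilde y\le 1\}}\tilde I^{-q}]\le c\, e^{-\frac{\alpha^2}{2}(r-u)}$ uniformly in $u\le r-1$.

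It then remains to integrate against the law of $T_\beta$. Writing $\P(T_\beta\in du)=\frac{\beta-1}{\sqrt{2\pi}u^{3/2}}e^{-\frac{(\beta-1-\alpha u)^2}{2u}}du$, the product $e^{\frac{\alpha^2 u}{2}}\P(T_\beta\in du)$ simplifies, after completing the square, to $e^{\alpha(\beta-1)}\frac{\beta-1}{\sqrt{2\pi}u^{3/2}}e^{-\frac{(\beta-1)^2}{2u}}du$: the drift is erased and one recovers the first-passage density of a \emph{driftless} Brownian motion to level $\beta-1$, whose total mass equals $1$. Consequently $\int_0^{r-1}e^{-\frac{\alpha^2}{2}(r-u)}\P(T_\beta\in du)\le e^{-\frac{\alpha^2}{2}r}\int_0^\infty e^{\frac{\alpha^2 u}{2}}\P(T_\beta\in du)=e^{-\frac{\alpha^2}{2}r}e^{\alpha(\beta-1)}$. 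Combining this with the prefactor $e^{-\frac{q\gamma}{2}(\beta-1)}$ yields exactly $J_{r,\beta}\le c_2\,e^{-\frac{\alpha^2}{2}r}e^{(\alpha-\frac{q\gamma}{2})\beta}$ for the main contribution, with $c_2$ depending only on $\gamma,\alpha,q$.

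The main obstacle is precisely this decoupling: the event $\{\sup_{[0,r]}y\le\beta\}$ constrains the entire path, whereas the negative chaos moment only feels a unit window, and the two must be separated without spoiling either the sharp rate $e^{-\alpha^2 r/2}$ or the $\beta$-dependence — conditioning on $\tilde y_1$ and using the independence of Brownian increments with the two preliminary lemmas is what achieves this, while the cancellation in the $T_\beta$-density is what produces the exact exponent $\alpha-\frac{q\gamma}{2}$. The only loose end is the boundary regime $T_\beta\in(r-1,r]$, where the forward unit interval no longer fits inside $[0,r]$; there I would instead lower-bound the mass on the backward interval $[T_\beta-1,T_\beta]\subset[0,r]$ (legitimate since $r\ge 2$ forces $T_\beta>1$), which again contributes $e^{-\frac{q\gamma}{2}(\beta-1)}$ times a uniformly bounded negative chaos moment, and control the probability of such a late first passage directly via $\P(T_\beta>r-1)=\P(\sup_{[0,r-1]}y<\beta-1)\le e^{\alpha(\beta-1)-\frac{\alpha^2}{2}(r-1)}$ from Lemma \ref{lemma sup}; this carries the same exponential order and is absorbed into $c_2$.
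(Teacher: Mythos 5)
Your treatment of the main case $T_{\beta}\le r-1$ is correct and is in substance identical to the paper's own argument: the paper also lower-bounds the chaos mass by the window $[T_{\beta},T_{\beta}+1]$, producing the factor $e^{-\frac{q\gamma}{2}(\beta-1)}I(T_{\beta})^{-q}$, applies Lemma \ref{lemma I} to the conditional negative moment of $I(T_{\beta})$, applies Lemma \ref{lemma sup} to the surviving constraint $\sup_{s\in[T_{\beta}+1,r]}(y_s-y_{T_{\beta}+1})\le \beta-y_{T_{\beta}+1}$, and then integrates the tilt $e^{\frac{\alpha^2 T_{\beta}}{2}}$ against the first-passage density, with the same cancellation $\E[\mathbf{1}_{\{T_{\beta}\le r-1\}}e^{\frac{\alpha^2 T_{\beta}}{2}}]\le e^{\alpha(\beta-1)}$ that you observe.

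The genuine gap is in your boundary case $T_{\beta}\in(r-1,r]$. There you lower-bound the mass by the backward window $[T_{\beta}-1,T_{\beta}]$, i.e. by $e^{\frac{\gamma}{2}(\beta-1)}Z$ with $Z=\int_{T_{\beta}-1}^{T_{\beta}}e^{-\frac{\gamma}{2}(y_{T_{\beta}}-y_s)}\mu_Y(ds)$, and then multiply ``a uniformly bounded negative chaos moment'' by $\P(T_{\beta}>r-1)$. But $Z$ and the event $\{T_{\beta}>r-1\}$ are functionals of the same Brownian path: the window has random endpoints determined by $T_{\beta}$, and conditionally on $T_{\beta}=u$ the backward increments $(y_{T_{\beta}}-y_{T_{\beta}-v})_{0\le v\le 1}$ are \emph{not} a free Brownian path (by Theorem \ref{theo Williams} they come from a drift-$(-\alpha)$ motion conditioned to stay negative, further conditioned on its last passage at $-(\beta-1)$ occurring at time $u$). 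So what your argument actually requires is $\sup_{u\in(r-1,r]}\E[Z^{-q}\mid T_{\beta}=u]<\infty$ uniformly in $\beta>1$ and $r\ge 2$; this is plausible but is a nontrivial estimate on a chaos weighted by a conditioned path, and neither Lemma \ref{lemma sup} nor Lemma \ref{lemma I} delivers it. The obvious ways to decouple without it fail quantitatively: Cauchy--Schwarz gives $\P(T_{\beta}>r-1)^{1/2}\E[Z^{-2q}]^{1/2}$, whose half power yields $e^{-\frac{\alpha^2 r}{4}}e^{\frac{\alpha\beta}{2}}$ where $e^{-\frac{\alpha^2 r}{2}}e^{\alpha\beta}$ is needed; since $\beta$ may stay bounded while $r\to\infty$, this does not prove the stated bound, and any H\"older exponent loses a fixed fraction of the rate in the same way. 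The paper sidesteps the issue by using the \emph{forward deterministic} window $[r-1,r]$: the event $\{T_{\beta}>r-1\}$ is $\mathcal{F}_{r-1}$-measurable, hence exactly independent of $I(r-1)$ and of $S:=\sup_{s\in[r-1,r]}(y_s-y_{r-1})$; on $\{\sup_{[0,r]}y\in[\beta-1,\beta]\}\cap\{T_{\beta}>r-1\}$ one has $y_{r-1}\ge \beta-1-S$, so the mass on $[r-1,r]$ is at least $e^{\frac{\gamma}{2}(\beta-1-S)}I(r-1)$ and the bound factorizes exactly as $e^{-\frac{q\gamma}{2}(\beta-1)}\P(T_{\beta}>r-1)\,\E[e^{\frac{q\gamma}{2}S}I(r-1)^{-q}]$, with Cauchy--Schwarz applied only to the last expectation, which is a constant in $r$ and $\beta$. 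Replacing your backward window by this forward one repairs the proof with the tools you already have.
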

\begin{proof}
\begin{align*}
J_{r,\beta}&\le e^{-\frac{q\gamma (\beta-1)}{2}}\E[\mathbf{1}_{\{T_{\beta}\le r-1\}}\frac{\mathbf{1}_{\{\sup_{s\in [0,r]}y_s \in [\beta -1,\beta]\}}}{I(T_{\beta})^q}]\\
&+\E[\mathbf{1}_{\{T_{\beta}> r-1\}}\frac{\mathbf{1}_{\{\sup_{s\in [0,r]}y_s \in [\beta -1,\beta]\}}}{e^{\frac{q\gamma y_{r-1}}{2}}I(r-1)^q}]=:A+B.
\end{align*}
We first bound $A$. By using the strong Markov property of $(y_s)_{s\ge 0}$ with respect to $\mathcal{F}_{T_{\beta}+1}$:
\begin{align*}
A\le  &e^{-\frac{q\gamma (\beta-1)}{2}}\E[\mathbf{1}_{\{T_{\beta}+1\le r\}}I(T_{\beta})^{-q}\mathbf{1}_{\{\sup_{s\in [T_{\beta}+1,r]}y_s-y_{T_{\beta}+1} \le \beta-y_{T_{\beta}+1}\}}]\\
=&e^{-\frac{q\gamma (\beta-1)}{2}}\E[\mathbf{1}_{\{T_{\beta}+1\le r\}}I(T_{\beta})^{-q}\E[\mathbf{1}_{\{\sup_{s\in [0,r-T_{\beta}-1]}y_s' \le \beta-y_{T_{\beta}+1}\}}|\mathcal{F}_{T_{\beta}+1}]]\\
=&e^{-\frac{q\gamma (\beta-1)}{2}}\E\Big[\mathbf{1}_{\{T_{\beta}+1\le r\}}\E[I(T_{\beta})^{-q}|\mathcal{F}_{T_{\beta}},\beta -y_{T_{\beta}+1}]\\
&\times\E[\mathbf{1}_{\{\sup_{s\in [0,r-T_{\beta}-1]}y_s' \le \beta-y_{T_{\beta}+1}\}}|\mathcal{F}_{T_{\beta}},\beta -y_{T_{\beta}+1}]\Big].
\end{align*}
By lemma \ref{lemma I},
\begin{equation*}
\E[I(T_{\beta})^{-q}|\mathcal{F}_{T_{\beta}},\beta-y_{T_{\beta}+1}] \le c_1(e^{-\frac{\gamma}{2}q(y_{T_{\beta}+1}-\beta)}+1) \quad a.s.
\end{equation*}
By lemma \ref{lemma sup}, 
\begin{equation*}
\E[\mathbf{1}_{\{\sup_{s\in [0,r-T_{\beta}-1]}y_s' \le \beta-y_{T_{\beta}+1}\}}|\mathcal{F}_{T_{\beta}},\beta -y_{T_{\beta}+1}]\le e^{\alpha(\beta-y_{T_{\beta}+1})-\frac{\alpha^2 (r-T_{\beta}-1)}{2}}\quad a.s.
\end{equation*}
Therefore:
\begin{equation*}
A、\le c_1 e^{-\frac{q\gamma (\beta-1)}{2}}\E\Big[\mathbf{1}_{\{T_{\beta}+1\le r\}} (e^{-\frac{\gamma}{2}q(y_{T_{\beta}+1}-\beta)}+1)  e^{\alpha(\beta-y_{T_{\beta}+1})-\frac{\alpha^2 (r-T_{\beta}-1)}{2}}\Big].
\end{equation*}
Conditioning on $\mathcal{F}_{T_{\beta}}$, $y_{T_{\beta}+1}-\beta$ has the law of $N+\alpha$ where $N\sim \mathcal{N}(0,1)$. Hence,
\begin{equation*}
\begin{split}
A、\le & c_1 e^{-\frac{q\gamma (\beta-1)}{2}}\E[(e^{-\frac{\gamma}{2}q(N+\alpha)}+1)e^{-\alpha (N+\alpha)}]\E[\mathbf{1}_{\{T_{\beta}+1\le r\}} e^{-\frac{\alpha^2 (r-T_{\beta}-1)}{2}}]\\
=& c_1 e^{-\frac{q\gamma (\beta-1)}{2}} (e^{-\frac{\alpha^2}{2}+\frac{\gamma^2 q^2}{8}}+e^{-\frac{\alpha^2}{2}})\E[\mathbf{1}_{\{T_{\beta}+1\le r\}} e^{-\frac{\alpha^2 (r-T_{\beta}-1)}{2}}]\\
\le &  c_1 e^{-\frac{q\gamma (\beta-1)}{2}} (e^{\frac{\gamma^2 q^2}{8}}+1)e^{-\frac{\alpha^2 r}{2}}\E[\mathbf{1}_{\{T_{\beta}\le r-1\}} e^{\frac{\alpha^2 T_{\beta}}{2}}].
\end{split}
\end{equation*}
We calculate with the density of $T_{\beta}$:
\begin{equation*}
\begin{split}
\E[\mathbf{1}_{\{T_{\beta}\le r-1\}} e^{\frac{\alpha^2 T_{\beta}}{2}}] =& \int_0^{r-1}\frac{\beta-1}{\sqrt{2\pi}u^{3/2}}e^{-\frac{(\beta-1-\alpha u)^2}{2u}}e^{\frac{\alpha^2 u}{2}}du\\ 
&=e^{\alpha(\beta-1)}\sqrt{\frac{2}{\pi}}\int_{\frac{\beta-1}{\sqrt{r-1}}}^{\infty}e^{-\frac{x^2}{2}}dx\\
&\le    e^{\alpha(\beta-1)}   .
\end{split}
\end{equation*}
Combining the elements above we get,
\begin{equation}\label{equation A}
 A\le c_1' e^{-\frac{\alpha^2r}{2}}e^{(\alpha-\frac{q\gamma}{2})\beta },
\end{equation}
for some constant $c_1'>0$ of $\gamma, \alpha$ and $q$.
We proceed similarly for $B$, using again the Markov property:
\begin{small}
\begin{align*}
B = &\E\Big[ \mathbf{1}_{\{T_{\beta} > r-1\}} \frac{\mathbf{1}_{\{\sup_{s\in [r-1,r]}(y_s-y_{r-1}) \in [\beta -1 -y_{r-1},\beta-y_{r-1}]\}}}{e^{\frac{q\gamma y_{r-1}}{2}}I(r-1)^q}  \Big]\\
\le &\E\Big[ \mathbf{1}_{\{T_{\beta}> r-1\}} \frac{1}{e^{\frac{q\gamma }{2}(\beta-1-\sup_{s\in [r-1,r]}(y_s-y_{r-1}))}I(r-1)^q}  \Big]\\
= & e^{-\frac{q\gamma}{2}(\beta-1)}\P(T_{\beta} > r-1) \E\Big[  e^{\frac{q\gamma }{2}\sup_{s\in [r-1,r]}(y_s-y_{r-1})}I(r-1)^{-q}   \Big]
\end{align*}
\end{small}
We show that the expectation term can be easily bounded: let us denote $(y'_s)_s$ an independent process which has the same law as $(y_s)_s$,
\begin{align*}
&\E\Big[ e^{\frac{q\gamma }{2}\sup_{s\in [r-1,r]}(y_s-y_{r-1})}I(r-1)^{-q}  \Big]\\
\le & \E\Big[ e^{q\gamma\sup_{s\in [0,1]}y_s'} \Big]^{\frac{1}{2}} \E \Big[I(r-1)^{-2q} \Big]^{\frac{1}{2}}\\
\le &c_1^{\frac{1}{2}} \E\Big[ e^{q\gamma \sup_{s\in [0,1]}y_s'} \Big]^{\frac{1}{2}}\cdot   \E[e^{-\gamma q  y_1'}+1]^{\frac{1}{2}} ,
\end{align*}
where in the last inequality we have used lemma \ref{lemma I}.  We see that this whole expression is a constant that depends on $\gamma, \alpha$ and $q$. 

Now it suffices to compute:
\begin{align*}
\P(T_{\beta} >r-1)&=\int_{r-1}^{\infty} \frac{\beta-1}{\sqrt{2\pi}u^{3/2}}e^{-\frac{(\beta-1-\alpha u)^2}{2u}}du \\
 &\le \frac{\beta-1}{\sqrt{2 \pi}} e^{\alpha(\beta-1)-\frac{\alpha^2 (r-1)}{2}} \int_{r-1}^{\infty} u^{-3/2} e^{-\frac{(\beta-1)^2}{2u}} du \\
 & \le  e^{\alpha(\beta-1)-\frac{\alpha^2 (r-1)}{2} }     .
\end{align*}
Hence
\begin{equation}\label{equation B}
B \le  c_1'' e^{-\frac{\alpha^2r}{2}}e^{(\alpha-\frac{q\gamma}{2})\beta } .
\end{equation}

Equations \eqref{equation A} and \eqref{equation B} together finish the proof of the lemma.
\end{proof}
Now we can prove the main lemma:
\begin{proof}[Proof of lemma \ref{lemma estimation r}]
Define for $n \ge 1$:
\begin{equation}
M_n = \{\sup_{s\in [r_0,r]}(y_s-y_{r_0}) \in [n-1,n] \} .
\end{equation}
We can write,
\begin{align*}
 \E[(\int_{r_0}^{r}e^{\frac{\gamma}{2}y_s}\mu_Y(ds))^{-q}] =&e^{(\frac{q^2\gamma^2}{8}-\frac{q\gamma\alpha}{2})r_0} \sum_{n\ge 1}\E[\mathbf{1}_{M_n}(\int_{r_0}^{r}e^{\frac{\gamma}{2}(y_s-y_{r_0})}\mu_Y(ds))^{-q}]\\
 =& e^{(\frac{q^2\gamma^2}{8}-\frac{q\gamma\alpha}{2})r_0}\sum_{n\ge 1}J_{r-r_0,n},
\end{align*}
and by lemma \ref{lemma J}  when $r-r_0 \ge 2$: 
\begin{equation*}
J_{r-r_0,n}\le  c_2  e^{-\frac{\alpha^2 r}{2}}e^{(\alpha-\frac{q\gamma}{2})n } .
\end{equation*}
In the case where $\alpha-\frac{q\gamma}{2}<0$, it is then straightforward that there exists $c$ depending on $r_0, \gamma, \alpha, q$ such that:
\begin{align*}
\E[(\int_{r_0}^{r}e^{\frac{\gamma}{2}y_s}\mu_Y(ds))^{-q}] \le c\, e^{-\frac{\alpha^2r}{2}}.
\end{align*}
The other case where $\alpha - \frac{q\gamma}{2} \ge 0$ is actually very direct to prove, since we then have:
\begin{align*}
\E[(\int_{r_0}^{r}e^{\frac{\gamma}{2}y_s}\mu_Y(ds))^{-q}] \leq \E[e^{-\frac{q\gamma}{2}y_{r-1}}] \E[I(r-1)^{-q} ] \le c \,e^{(\frac{q^2\gamma^2}{8} - \frac{q\gamma\alpha}{2})r}.
\end{align*}
In the last inequality we have used the fact that $y_{r-1} = B_{r-1} + \alpha (r-1)$ and that $\E[I(r-1)^{-q} ] $ is a constant independent of $r$ that we can absorb in $c$. Notice this argument actually works whenever $\alpha > 0$. This finishes the proof of lemma \ref{lemma estimation GMC}.
\end{proof}

\subsection{Fusion estimation and the reflection coefficient}\label{section fusion}
In this subsection we will prove the asymptotic expansion result that is used in subsection \ref{subsection shift p} to obtain the shift equation \eqref{relation_c2} on $p$ with a shift $\frac{4}{\gamma^2}$. In this expansion will appear the reflection coefficient introduced in section \ref{section tail} which will also be discussed in the next subsection. Here we will thus show:
\begin{lemma}\label{lemma fusion}
For $-1-\frac{\gamma^2}{4}< a <-1-\frac{\gamma^2}{4}+a_0$  with $a_0>0$ a constant chosen small enough, $p<1+\frac{4}{\gamma^2}(a+1)$, as $t \rightarrow 0_-$,
\begin{small}
\begin{equation}
\begin{split}
U(t) &=M(\gamma,p,a+\frac{\gamma^2}{4},0)\\ 
&+   g(\gamma,a)\frac{\Gamma(-p+ 1+\frac{4}{\gamma^2}(a+1))}{\Gamma(-p)}|t|^{1+a+\frac{\gamma^2}{4}} M(\gamma,p-1-\frac{4}{\gamma^2}(a+1),-2-a-\frac{\gamma^2}{4},0)\\
&+o( |t|^{1 +a+\frac{\gamma^2}{4}}),
\end{split}
\end{equation}
\end{small}
where $g(\gamma,a)$ is defined as:
\begin{equation}
g(\gamma,a) = -\Gamma(-\frac{4}{\gamma^2}(a+1))\mathbb{E}[ (\frac{1}{2}\int_{-\infty}^{\infty}e^{\frac{\gamma}{2}\mathcal{B}_s^{\frac{\gamma}{4}+\frac{1}{\gamma}(a+1)}}\mu_{Y}(ds))^{ 1+\frac{4}{\gamma^2}(a+1)}].
\end{equation}
The process $\mathcal{B}^{\frac{\gamma}{4}+\frac{1}{\gamma}(a+1)}$ is defined by \eqref{mathcal B} and $\mu_Y(ds) = e^{\frac{\gamma}{2}Y(e^{-s/2})}ds$ is the notation introduced in section \ref{section estimation GMC}. 
\end{lemma}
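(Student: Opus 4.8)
The plan is to read this as a fusion (operator-product) estimate: it describes how $U(t)=\E[I_t^p]$ behaves when the extra insertion point $t$ collides with the singularity $x^a$ at the origin, where $I_t:=\int_0^1(x-t)^{\frac{\gamma^2}{4}}x^ae^{\frac{\gamma}{2}X(x)}dx$ and $U(0)=M(\gamma,p,a+\frac{\gamma^2}{4},0)$. The key elementary observation is that for $t<0$ the factor $(x-t)^{\frac{\gamma^2}{4}}=(x+|t|)^{\frac{\gamma^2}{4}}$ interpolates between $x^{\frac{\gamma^2}{4}}$ on the macroscopic scales $x\gg|t|$ and the constant $|t|^{\frac{\gamma^2}{4}}$ on the microscopic scales $x\ll|t|$. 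Thus the effective insertion exponent seen by the GMC is $a+\frac{\gamma^2}{4}$ above scale $|t|$ and only $a$ below it, and the whole nontrivial correction of order $|t|^{1+a+\frac{\gamma^2}{4}}$ is produced by this change of weight across the window $x\sim|t|$. First I would make the split $I_t=\int_0^\eta+\int_\eta^1$ at a fixed small $\eta$ and check that on $[\eta,1]$ one has $(x-t)^{\frac{\gamma^2}{4}}=x^{\frac{\gamma^2}{4}}+O(|t|)$ uniformly, so that, since $1+a+\frac{\gamma^2}{4}<1$ (because $a<-1$), the macroscopic piece only feeds the $o(|t|^{1+a+\frac{\gamma^2}{4}})$ remainder.

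The heart of the argument is the microscopic region, which I would analyze with the radial decomposition \eqref{radial}, $X(e^{-s/2})=B_s+Y(e^{-s/2})$. Writing $s=-2\ln x$, the GMC near $0$ becomes, up to the martingale renormalization, $\tfrac12\int e^{\frac{\gamma}{2}(B_s-\lambda s)}\mu_Y(ds)$ with microscopic drift $\lambda=\frac{\gamma}{4}+\frac{1}{\gamma}(a+1)$; the hypothesis $a>-1-\frac{\gamma^2}{4}$ is precisely what makes $\lambda>0$, so this mass is a.s.\ finite, and $\lambda=\frac{Q-\alpha}{2}$ for $\alpha=-\frac{2a}{\gamma}\in(\frac{2}{\gamma},Q)$, matching the reflection coefficient $\overline{R}_1^{\partial}(\alpha)$. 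Rescaling by $|t|$ extracts the prefactor $|t|^{1+a+\frac{\gamma^2}{4}}$ from the deterministic weights ($x^a\mapsto|t|^a$, $(x-t)^{\frac{\gamma^2}{4}}\mapsto|t|^{\frac{\gamma^2}{4}}$, $dx\mapsto|t|\,dz$).

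I would then condition on the supremum of the drifted path $B_s-\lambda s$ via the Williams decomposition of Theorem \ref{theo Williams}, exactly as in the derivation of Proposition \ref{proposition reflection}. This decouples the path into a piece carrying the deep (sub-$|t|$) excursion and an independent piece carrying the bulk. The deep excursion, after the time-reversal \eqref{equation 4.3}, builds the two-sided process $\mathcal{B}^\lambda$ of \eqref{mathcal B} and produces the factor $g(\gamma,a)$, i.e.\ the reflection integral $\tfrac12\int_{-\infty}^\infty e^{\frac{\gamma}{2}\mathcal{B}^\lambda_s}\mu_Y(ds)$ raised to the absorbed charge $\frac{2}{\gamma}(Q-\alpha)=1+\frac{4}{\gamma^2}(a+1)$, together with the prefactor $-\Gamma(-\frac{4}{\gamma^2}(a+1))$. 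The bulk piece becomes an independent GMC moment whose order is lowered by exactly this absorbed charge and whose insertion weight is reflected, yielding $M(\gamma,p-1-\frac{4}{\gamma^2}(a+1),-2-a-\frac{\gamma^2}{4},0)$. Finally, integrating over the location of the maximum (a Mellin/Gamma integral in the scale variable) produces the combinatorial ratio $\frac{\Gamma(-p+1+\frac{4}{\gamma^2}(a+1))}{\Gamma(-p)}$, which is the real-$p$ analogue of the falling factorial that, for integer $p$, would count the Selberg coordinates sent into the cluster at $0$.

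The hard part will be the rigorous decoupling in the last step: showing that the microscopic and macroscopic GMC masses factorize at precisely the order $|t|^{1+a+\frac{\gamma^2}{4}}$, that every remainder is genuinely $o(|t|^{1+a+\frac{\gamma^2}{4}})$ uniformly in $t\to 0_-$, and that the Gamma-ratio persists for all admissible real $p$ rather than just integers. This is where the quantitative GMC moment bounds of Lemma \ref{lemma estimation GMC} (equivalently Lemma \ref{lemma estimation r}) are essential, and where the standing assumptions—$a_0$ small, so that $\lambda$ stays near $0$ and the microscopic mass retains enough finite moments, together with $p<1+\frac{4}{\gamma^2}(a+1)$ guaranteeing convergence and integrability of the limiting reflection integral—are used in a crucial way.
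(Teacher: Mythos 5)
Your outline reproduces the skeleton of the paper's own proof: split off the scales above $|t|$ (whose contribution to the error is controlled by interpolation and Lemma \ref{lemma estimation GMC}), pass to the radial decomposition, apply the Williams decomposition of Theorem \ref{theo Williams} to the drifted Brownian motion with drift $\lambda=\frac{\gamma}{4}+\frac{1}{\gamma}(a+1)$, and integrate against the Pareto law of $e^{\frac{\gamma}{2}M}$, which via the identity \eqref{equation integral computation 1} produces exactly the factor $-\Gamma(-\tfrac{4}{\gamma^2}(a+1))\frac{\Gamma(-p+1+\frac{4}{\gamma^2}(a+1))}{\Gamma(-p)}$, while a Girsanov shift (flipping the drift $-\lambda\mapsto+\lambda$) turns the bulk factor into $M(\gamma,p-1-\frac{4}{\gamma^2}(a+1),-2-a-\frac{\gamma^2}{4},0)$. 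All of these identifications are correct, including $\lambda=\frac{Q-\alpha}{2}$ and the absorbed charge $\frac{4\lambda}{\gamma}=1+\frac{4}{\gamma^2}(a+1)$.

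However, the step you label as ``the hard part'' is a genuine gap, and the tool you propose for it is the wrong one. The factorization of the deep-scale mass from the bulk mass cannot be obtained from the moment bounds of Lemma \ref{lemma estimation GMC}: those bounds only control sizes, not correlations, and the two masses are \emph{not} independent. The paper's mechanism is different: one discards an intermediate buffer annulus $[|t|^{1+h},|t|]$ (this step does use the moment bounds), observes that the covariance of $Y$ between scales $\le |t|^{1+h}$ and $\ge |t|$ is $O(|t|^{h})$, and then applies Kahane's convexity inequality (Theorem \ref{theo convex inequality}) to replace the field on the deep scales by an independent copy, at the multiplicative cost $e^{\frac{\gamma^2}{4}(p^2-p)|t|^h}=1+O(|t|^h)$. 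For this cost to be absorbed into $o(|t|^{1+a+\frac{\gamma^2}{4}})$ one needs $h>1+a+\frac{\gamma^2}{4}$, while making the buffer annulus negligible requires $h<-\frac{1+a}{1+a+\gamma^2}$ as in \eqref{condition h1}; this window is nonempty only when $a$ is close to $-1-\frac{\gamma^2}{4}$. This is the actual reason the lemma is stated with $a_0$ small --- not, as you suggest, to keep $\lambda$ near $0$ or to retain finite moments of the microscopic mass. Without the buffer scale and the Kahane comparison, your rescaling-plus-Williams computation establishes the form of the correction only for a genuinely independent microscopic field, not for $U(t)$ itself.
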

Notice that in the expression of $g(\gamma,a)$ we recognize the reflection coefficient $\overline{R}_1^{\partial}(-\frac{2a}{\gamma})$ of section \ref{section tail}.  We emphasize that we only need the result for $a$ in a small open set, it is not necessary to obtain an explicit value for $a_0$.
\begin{remark}
From the conditions on $a$ and $p$ in the lemma, we have $-2-a-\frac{\gamma^2}{4}>-1-\frac{\gamma^2}{4}$ and $p-1-\frac{4}{\gamma^2}(a+1)<0$, thus the bounds \eqref{bounds} are satisfied and $M(\gamma,p-1-\frac{4}{\gamma^2}(a+1),-2-a-\frac{\gamma^2}{4},0)$ is well defined. We also want to mention that a similar result holds for $\tilde{U}(t)$ and the proof is almost the same.
\end{remark}
\begin{proof}
We adapt the arguments in \cite{DOZZ2} for the proof of this lemma. We introduce the notation
\begin{equation}
K_I(t) :=\int_I (x-t)^{\frac{\gamma^2}{4}}  x^{a} e^{\frac{\gamma}{2} X( x) } d x
\end{equation}
for a borel set $I \subseteq [0,1]$. Recall that we work with $-1-\frac{\gamma^2}{4}<a<-1-\frac{\gamma^2}{4}+a_0$ with $a_0$ small, hence $p<1+\frac{4}{\gamma^2}(a+1)<1$. We want to study the asymptotic of 
\begin{equation}
 \E[K_{[0,1]}(t)^{p}]-\E[K_{[0,1]}(0)^{p}] =: T_1+T_2,
\end{equation}
where we defined:
\begin{equation}
T_1: = \E[K_{[|t|,1]}(t)^{p}]-\E[K_{[0,1]}(0)^{p}], \quad T_2:= \E[K_{[0,1]}(t)^{p}]- \E[K_{[|t|,1]}(t)^{p}].
\end{equation}
$\Diamond $ First we consider $T_1$. The goal is to show that $T_1 = o(|t|^{1+a+\frac{\gamma^2}{4}})$. By interpolation,
\begin{align}\label{equation 5.4}
|T_1| \le &|p| \int_0^1 du \E[|K_{[|t|,1]}(t)-K_{[0,1]}(0)|(uK_{[|t|,1]}(t)+(1-u)K_{[0,1]}(0))^{p-1}] \nonumber\\
\le &|p| \E[|K_{[|t|,1]}(t)-K_{[0,1]}(0)|K_{[|t|,1]}(0)^{p-1}]\le |p|(A_1+A_2),
\end{align}
where $$A_1= \E[|K_{[|t|,1]}(t)-K_{[|t|,1]}(0)|K_{[|t|,1]}(0)^{p-1}]$$ and $$A_2 = \E[|K_{[|t|,1]}(0)-K_{[0,1]}(0)|K_{[|t|,1]}(0)^{p-1}].$$ We start by estimating $A_1$. Using the sub-additivity of the function $x\mapsto x^{\frac{\gamma^2}{4}}$,
\begin{align*}
A_1= & \E[|K_{[|t|,1]}(t)-K_{[|t|,1]}(0)|K_{[|t|,1]}(0)^{p-1}]\\ 
\le & |t|^{\frac{\gamma^2}{4}} \int_{|t|}^1  dx_1\, x_1^{a} \E[(\int_{|t|}^1 \frac{ x^{a+\frac{\gamma^2}{4}} }{|x-x_1|^{\frac{\gamma^2}{2}}}e^{\frac{\gamma}{2} X( x) } d x)^{p-1}]\\
\le &|t|^{\frac{\gamma^2}{4}} \int_{|t|}^{t_0}  dx_1\, x_1^{a} \E[(\int_{x_1}^1 x^{a-\frac{\gamma^2}{4}} e^{\frac{\gamma}{2} X( x) } d x)^{p-1}] +c|t|^{\frac{\gamma^2}{4}},
\end{align*}
where $t_0$ is a constant in $(0,1)$ to be fixed. Note that in this subsection we will use $c>0$ to denote a positive constant with the abuse of notation that it can be a different constant every time it appears. Here we now need to apply lemma \ref{lemma estimation GMC}. We check that the bounds of \eqref{bounds} on $p$ imply that $1 + a + (1-p) \frac{\gamma^2}{2} >0$. Therefore we are in the first case of lemma \ref{lemma estimation GMC} which implies there exists $\epsilon_1>0$ such that for all $x_1<\epsilon_1$:
\begin{equation}\label{equation 5.5}
\E[(\int_{x_1}^1 x^{a-\frac{\gamma^2}{4}} e^{\frac{\gamma}{2} X( x) } d x)^{p-1}] \le  c \,x_1^{\frac{1}{\gamma^2}(a+1)^2}.
\end{equation}
Taking $t_0 = \epsilon_1$ we obtain:
\begin{align}
A_1 \le &c |t|^{\frac{\gamma^2}{4}} \int_{|t|}^{\epsilon_1}  dx_1\, x_1^{a+\frac{1}{\gamma^2}(a+1)^2 }+c|t|^{\frac{\gamma^2}{4}}\nonumber\\
\le &c \,|t|^{1+\frac{\gamma^2}{4}+a+\frac{1}{\gamma^2}(a+1)^2}+c|t|^{\frac{\gamma^2}{4}}=o\big( |t|^{1+a+\frac{\gamma^2}{4}}\big).
\end{align}
On the other hand:
\begin{align}\label{equation 5.7}
A_2 &= \E[K_{[0,|t|]}(0)K_{[|t|,1]}(0)^{p-1}]\\ \nonumber
 &= \int_0^{|t|}  dx_1\, x_1^{a+\frac{\gamma^2}{4}} \E[(\int_{|t|}^1 \frac{ x^{a+\frac{\gamma^2}{4}} }{|x-x_1|^{\frac{\gamma^2}{2}}}e^{\frac{\gamma}{2} X( x) } d x)^{p-1}]\nonumber \\
&\le   \int_0^{|t|}  dx_1\, x_1^{a+\frac{\gamma^2}{4}} \E[(\int_{|t|}^1  x^{a-\frac{\gamma^2}{4}} e^{\frac{\gamma}{2} X( x) } d x)^{p-1}] \nonumber \\
&\overset{(\ref{equation 5.5})}{\le}  c |t|^{1+a+\frac{\gamma^2}{4}+\frac{1}{\gamma^2}(a+1)^2} = o(|t|^{1+a+\frac{\gamma^2}{4}}).
\end{align}
Hence we have shown that $T_1 = o(|t|^{1+a+\frac{\gamma^2}{4}})$.
\\~\\
$\Diamond$ Now we focus on $T_2$. The goal is to restrict $K$ to the complementary of  $[|t|^{1+h},|t|]$, with $h>0$ a constant to be fixed, and then on the two parts the GMC's are weakly correlated. The same computation as \eqref{equation 5.4} together with the technique we used for $T_1$ show that for $|t|$ sufficiently small: 
\begin{equation*}
\begin{split}
\E[K_{[0,1]}(t)^{p}]- \E[K_{[|t|^{1+h},|t|]^c}(t)^{p}] &\le |p| \E[K_{[|t|^{1+h},|t|]}(t) K_{[|t|,1]}(0)^{p-1}]\\
& \le c |t|^{\frac{\gamma^2}{4}} \int_{|t|^{1+h}}^{|t|}  dx_1\, x_1^{a+\frac{1}{\gamma^2}(a+1)^2  }\\
& \le  c |t|^{\frac{\gamma^2}{4}+(1+h) \left(1+a+\frac{1}{\gamma^2}(a+1)^2  \right)} 
\end{split}
\end{equation*}
By taking $h<-\frac{1+a}{1+a+\gamma^2}$, we have
\begin{equation}\label{condition h1}
\frac{\gamma^2}{4}+(1+h) (1+a+\frac{1}{\gamma^2}(a+1)^2)>1+a+\frac{\gamma^2}{4},
\end{equation}
hence
\begin{equation}
\E[K_{[0,1]}(t)^{p}]- \E[K_{[|t|^{1+h},|t|]^c}(t)^{p}] = o(|t|^{1+a+\frac{\gamma^2}{4}}).
\end{equation}
This means that it suffices to evaluate $\E[K_{[|t|^{1+h},|t|]^c}(t)^{p}]-\E[K_{[|t|,1]}(t)^{p}]$. We will use the radial decomposition of $X$ with the notations introduced in the first paragraph of section \ref{section estimation GMC},
\begin{equation}
K_1(t) := K_{[|t|,1]}(t) =  \frac{1}{2} \int_0^{2 \ln \frac{1}{|t| }} (e^{-s/2}-t)^{\frac{\gamma^2}{4}} \,  e^{ \frac{\gamma}{2}(B_s - s( \frac{\gamma}{4} +\frac{1}{\gamma}(a+1)) ) } \mu_{Y}(ds) ,
\end{equation}
\begin{equation}
K_2(t)  := K_{[0,|t|^{1+h}]}(t) =  \frac{1}{2} \int_{2(1+h)\ln \frac{1}{|t|}}^{\infty} (e^{-s/2}-t)^{\frac{\gamma^2}{4}} \,  e^{ \frac{\gamma}{2}(B_s - s( \frac{\gamma}{4} +\frac{1}{\gamma}(a+1)) ) } \mu_{Y}(ds) .
\end{equation}
From (\ref{equation correlation Y}), we deduce that for $s\le 2 \ln \frac{1}{|t|}$ and $s' \ge 2(1+h) \ln \frac{1}{|t|} $,
\begin{equation}
0 \le \E[Y(e^{-s/2})Y(e^{-s'/2})] = \ln \frac{1}{|1-e^{-(s'-s)/2}|} \le 2|t|^h,
\end{equation}
where we used the inequality $\ln\frac{1}{1-x} \le 2x$ for $x\in [0,\frac{1}{2}]$. Define the processes,
\begin{equation*}
P(e^{-s/2}) := Y(e^{-s/2})\mathbf{1}_{\{s\le 2 \ln \frac{1}{|t|}\}} + Y(e^{-s/2})\mathbf{1}_{\{s \ge 2(1+h) \ln \frac{1}{|t|}\}},
\end{equation*}
\begin{equation*}
\tilde{P}(e^{-s/2}) := Y(e^{-s/2})\mathbf{1}_{\{s\le 2 \ln \frac{1}{|t|}\}} + \tilde{Y}(e^{-s/2})\mathbf{1}_{\{s \ge 2(1+h) \ln \frac{1}{|t|}\}},
\end{equation*}
where $\tilde{Y}$ is a gaussian field independent from everything and has the same law as $Y$. Then we have the inequality over the covariance:
\begin{equation}
\E[\tilde{P}(e^{-s/2})\tilde{P}(e^{-s'/2})] \le \E[P(e^{-s/2})P(e^{-s'/2})] \le \E[\tilde{P}(e^{-s/2})\tilde{P}(e^{-s'/2})]+2|t|^h.
\end{equation}
The function $x\mapsto x^{p}$ is convex when $p\le 0$ and concave when $0<p<1 $. We will only work with the case $p \le 0$ since the case $ 0<p<1$ can be treated in the same way. By applying Kahane's inequality of Theorem \ref{theo convex inequality},
\begin{equation}
\E[(K_1(t) +\tilde{K}_2(t) )^p] \le \E[(K_1(t) +K_2(t) )^p] \le e^{\frac{\gamma^2}{4}(p^2-p)|t|^h}\E[(K_1(t) +\tilde{K}_2(t) )^p],
\end{equation}
where $\tilde{K}_2(t)  :=   \frac{1}{2} \int_{2(1+h)\ln \frac{1}{|t|}}^{\infty} (e^{-s/2}-t)^{\frac{\gamma^2}{4}} \,  e^{ \frac{\gamma}{2}(B_s - s( \frac{\gamma}{4} +\frac{1}{\gamma}(a+1)) ) } \mu_{\tilde{Y}}(ds) $. By the Markov property of Brownian motion and stationarity of $\mu_{\tilde{Y}}$, we have

\begin{equation}
\begin{split}
\tilde{K}_2(t)  :=  & \frac{1}{2} |t|^{(1+h)(1+a+\frac{\gamma^2}{4})+\frac{\gamma^2}{4}}e^{\frac{\gamma}{2}B_{2(1+h)\ln (1/|t|)}}\\
&\int_{0}^{\infty} (|t|^h e^{-s/2}+1)^{\frac{\gamma^2}{4}} \,  e^{ \frac{\gamma}{2}(\tilde{B}_s - s( \frac{\gamma}{4} +\frac{1}{\gamma}(a+1)) ) } \mu_{\tilde{Y}}(ds),
\end{split}
\end{equation}
with $\tilde{B}$ an independent Brownian motion. We denote
\begin{equation}
\sigma_t:= |t|^{(1+h)(1+a+\frac{\gamma^2}{4})+\frac{\gamma^2}{4}}e^{\frac{\gamma}{2}B_{2(1+h)\ln (1/|t|)}},\quad V: =\frac{1}{2}\int_{0}^{\infty}  e^{ \frac{\gamma}{2}(\tilde{B}_s - s( \frac{\gamma}{4} +\frac{1}{\gamma}(a+1)) ) } \mu_{\tilde{Y}}(ds),
\end{equation}
then:
\begin{align}
\E[(K_1(t) +(1+|t|^h)^{\frac{\gamma^2}{4}}\sigma_t V)^p] &\le \E[(K_1(t) +K_2(t) )^p]\\ \nonumber
 &\le e^{\frac{\gamma^2}{4}(p^2-p)|t|^h}\E[(K_1(t) +\sigma_t V )^p].
\end{align}
By the Williams path decomposition of Theorem \ref{theo Williams} we can write,
\begin{equation}
V =  e^{\frac{\gamma}{2}M}\frac{1}{2}\int_{-L_M}^{\infty}  e^{ \frac{\gamma}{2}\mathcal{B}^{\lambda}_s} \mu_{\tilde{Y}}(ds),
\end{equation}
where $\lambda =  \frac{\gamma}{4} +\frac{1}{\gamma}(a+1)$, $ M = \sup_{s \geqslant 0} (\tilde{B}_s -\lambda s) $ and $L_M$ is the last time $\left( \mathcal{B}^{\lambda}_{-s} \right)_{s\ge 0} $ hits $-M$. Recall that the law of $M$ is known, for $v\ge 1$,
\begin{equation}\label{equation 5.18}
\mathbb{P}( e^{\frac{\gamma}{2} M} >v ) = \frac{1}{v^{ \frac{4\lambda}{\gamma}}}.
\end{equation}
For simplicity, we introduce the notations:
\begin{equation}\label{equation rho}
\rho_A(\lambda) :=\frac{1}{2}\int_{-L_A}^{\infty}  e^{ \frac{\gamma}{2}\mathcal{B}^{\lambda}_s} \mu_{\tilde{Y}}(ds), \quad \rho(\lambda) :=\frac{1}{2}\int_{-\infty}^{\infty}  e^{ \frac{\gamma}{2}\mathcal{B}^{\lambda}_s} \mu_{\tilde{Y}}(ds).
\end{equation}
Now we discuss the lower and upper bound separately.
\\~\\
$\Diamond$ Lower bound:
Since we work with $p\le 0$,
\begin{small}
\begin{align*}
\E[(&K_1(t) +K_2(t) )^p]-\E[K_1(t)^p]\\
&\ge  \E[(K_1(t) +(1+|t|^h)^{\frac{\gamma^2}{4}}\sigma_t e^{\frac{\gamma}{2}M}\rho(\lambda))^p]-\E[K_1(t)^p]\\
&= \frac{4\lambda }{\gamma}\E\Big[ \int_1^{\infty} \frac{dv}{v^{ \frac{4\lambda }{\gamma}+1}}  \Big( ( K_1(t)+(1+|t|^h)^{\frac{\gamma^2}{4}}\sigma_t \rho(\lambda) v )^p  - K_1(t)^p \Big)\Big] \\
&= \frac{4\lambda }{\gamma} \E\Big[\int_{\frac{(1+|t|^h)^{\frac{\gamma^2}{4}}\sigma_t\rho(\lambda)}{K_1(t)}}^{\infty} \frac{du}{u^{\frac{4\lambda }{\gamma}+1}}((u+1)^p -1)  ((1+|t|^h)^{\frac{\gamma^2}{4}}\sigma_t \rho(\lambda) )^{\frac{4\lambda }{\gamma}}K_1(t)^{p- \frac{4\lambda }{\gamma}}\Big] \\
&\overset{(\ref{equation integral computation 1})}{\ge}\frac{4\lambda}{\gamma}\frac{\Gamma(-p+ \frac{4\lambda}{\gamma})\Gamma(- \frac{4\lambda}{\gamma})}{\Gamma(-p)} \mathbb{E}[ ((1+|t|^h)^{\frac{\gamma^2}{4}}\sigma_t \rho(\lambda) )^{\frac{4\lambda }{\gamma}}K_1(t)^{p- \frac{4\lambda }{\gamma}}].
\end{align*}
\end{small}
By the Girsanov theorem,
\begin{small}
\begin{align}
\mathbb{E}[ (&(1+|t|^h)^{\frac{\gamma^2}{4}}\sigma_t \rho(\lambda) )^{\frac{4\lambda }{\gamma}}K_1(t)^{p- \frac{4\lambda }{\gamma}}]\nonumber\\
&= \big(|t|(1+|t|^h)\big)^{1+a+\frac{\gamma^2}{4}}\mathbb{E}[ \rho(\lambda)^{ \frac{4\lambda}{\gamma}}] \\ \nonumber 
& \times \E\big[\big(\frac{1}{2}\int_{0 }^{2\ln \frac{1}{\vert t \vert} } (e^{-s/2}-t)^{\frac{\gamma^2}{4}} e^{ \frac{\gamma}{2}(B_s + s( \frac{\gamma}{4} +\frac{1}{\gamma}(a+1)) ) } \mu_{Y}(ds)\big)^{p-\frac{4\lambda}{\gamma}}\big] \nonumber
\\
&\underset{t\to 0_-}{\sim} |t|^{1+a+\frac{\gamma^2}{4}}\mathbb{E}[ \rho(\lambda)^{ \frac{4\lambda}{\gamma}}] M(\gamma,p-1-\frac{4}{\gamma^2}(a+1),-2-a-\frac{\gamma^2}{4},0).
\end{align}
\end{small}
This completes the proof for lower bound.
\\~\\
$\Diamond$ Upper bound: we start with an inequality:
\begin{align}
 \E[\big(&(K_1(t) +K_2(t) )^p]-\E[K_1(t)^p] \nonumber \\
  \le & \E[(K_1(t) +\sigma_t V )^p]-\E[K_1(t)^p]+(e^{\frac{\gamma^2}{4}(p^2-p)|t|^h}-1) \E[K_1(0)^p]\\
  =&  \E[(K_1(t) +\sigma_t V )^p]-\E[K_1(t)^p]+O(|t|^h).
\end{align}
To get rid of the big O term, we will need an $h$ such that
\begin{equation}
h>1+a+\frac{\gamma^2}{4}.
\end{equation}
Together with the condition \ref{condition h1}, we have 
\begin{equation}
1+a+\frac{\gamma^2}{4} < h <-  \frac{1+a}{1+a+\gamma^2}.
\end{equation}
There exists such an $h$ when $a$ is sufficiently close to $-1-\frac{\gamma^2}{4}$.

For $A>0$ fixed, since $p\le 0$ we have,
\begin{small}
\begin{align*}
 \E[(&K_1(t) +\sigma_t V )^p-K_1(t)^p] \le \E[\big((K_1(t) +\sigma_t V )^p-K_1(t)^p\big) \mathbf{1}_{\{M>A\}}]\\
 & \le \E[\big((K_1(t) +\sigma_t\, e^{\frac{\gamma}{2}M} \rho_A(\lambda)  )^p-K_1(t)^p\big) \mathbf{1}_{\{M>A\}}]\\
 &\overset{(\ref{equation 5.18})}{=}\frac{4\lambda }{\gamma} \E \Big[\int_{\frac{e^{\gamma A/2}\sigma_t\rho_A(\lambda) }{K_1(t)}}^{\infty} \frac{du}{u^{\frac{4\lambda }{\gamma}+1}}((u+1)^p -1) (\sigma_t \rho_A(\lambda) )^{\frac{4\lambda }{\gamma}}K_1(t)^{p- \frac{4\lambda }{\gamma}}\Big] \\
 &\overset{\text{Girsanov}}{=} \frac{4\lambda }{\gamma}|t|^{1+a+\frac{\gamma^2}{4}} \E \Big[\int_{\frac{e^{\gamma A/2}\hat{\sigma}_t\rho_A(\lambda) }{\hat{K}_1(t)}}^{\infty} \frac{du}{u^{\frac{4\lambda }{\gamma}+1}}((u+1)^p -1) \rho_A(\lambda)^{\frac{4\lambda }{\gamma}}\hat{K}_1(t)^{p- \frac{4\lambda }{\gamma}}\Big],
\end{align*}
\end{small}
where
\begin{small}
\begin{align*}
\hat{K}_1(t)&= \E\big[\big(\frac{1}{2}\int_{0 }^{2\ln \frac{1}{\vert t \vert} } (e^{-s/2}-t)^{\frac{\gamma^2}{4}} e^{ \frac{\gamma}{2}(B_s + s( \frac{\gamma}{4} +\frac{1}{\gamma}(a+1)) ) } \mu_{Y}(ds)\big)^{p-\frac{4\lambda}{\gamma}}\big]\\
&\overset{t\to 0_-}{\sim} M(\gamma,p-1-\frac{4}{\gamma^2}(a+1),-2-a-\frac{\gamma^2}{4},0),
\end{align*}
\end{small}

and for $a<-1-\frac{h \gamma^2}{4(1+h)}$,
\begin{equation*}
\hat{\sigma_t} = |t|^{-(1+h)(1+a+\frac{\gamma^2}{4}) + \frac{\gamma^2}{4}}e^{\frac{\gamma}{2}B_{2(1+h)\ln(1/|t|)}}\overset{t\to 0_-} {\longrightarrow}0 \quad \text{a.s.}
\end{equation*}

Hence $ \E[(K_1(t) +\sigma_t V )^p-K_1(t)^p]$ is smaller than a term equivalent to:
\begin{small}
$$\frac{4\lambda}{\gamma}\frac{\Gamma(-p+ \frac{4\lambda}{\gamma})\Gamma(- \frac{4\lambda}{\gamma})}{\Gamma(-p)}|t|^{1+a+\frac{\gamma^2}{4}}\mathbb{E}[ \rho_A(\lambda)^{ \frac{4\lambda}{\gamma}}] M(\gamma,p-1-\frac{4}{\gamma^2}(a+1),-2-a-\frac{\gamma^2}{4},0).$$
\end{small}
We can conclude by sending $A$ to $\infty$.
\end{proof}

\subsection{Computation of the reflection coefficient}\label{sec_reflection}
The goal of this subsection is to prove the tail expansion result for GMC given by Proposition \ref{proposition reflection}. In the first step we give a proof of the tail expansion \eqref{tail_result1} where the coefficient $\overline{R}_1^{\partial}$ is expressed in terms of the processes $Y$ and $\mathcal{B}_s^{\alpha}$ as defined in the section \ref{section tail}. The proof is almost the same as in \cite{DOZZ2}. In the second step we provide the exact value \eqref{tail_result2} for $\overline{R}_1^{\partial}$ by using Theorem \ref{main_result}. Before proving the proposition, we provide a useful lemma. The proof can be found in \cite{DOZZ2} (see lemma 2.8). 
\begin{lemma}\label{lemme_dozz}
Let $\alpha \in (\frac{\gamma}{2},Q)$ with $Q=\frac{\gamma}{2} +\frac{2}{\gamma}$, then for $p<\frac{4}{\gamma^2}$ and all non trivial interval $I \subseteq \mathbb{R}$:
\begin{equation}
\E[(\frac{1}{2}\int_I  e^{ \frac{\gamma}{2}\mathcal{B}^{\frac{Q-\alpha}{2}}_s} e^{\frac{\gamma}{2} Y(e^{-s/2})  } ds)^p]< \infty.
\end{equation}
\end{lemma}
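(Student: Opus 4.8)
The plan is to set $\lambda = \frac{Q-\alpha}{2}$, which is strictly positive since $\alpha < Q$, so that the conditioned process $\mathcal{B}^{\lambda}$ of \eqref{mathcal B} has asymptotic drift $-\lambda$ and satisfies $\mathcal{B}^{\lambda}_s \le 0$ for all $s$. Two structural facts will be used throughout: first, $\mathcal{B}^{\lambda}$ is independent of $Y$ by construction; second, in the variable $s$ the field $Y(e^{-s/2})$ is stationary (its covariance $2\ln\frac{1}{1-e^{-|s-s'|/2}}$ depends only on $|s-s'|$), so $\mu_Y([n,n+1])$ has the same law as $\mu_Y([0,1])$ for every $n$, with $\E[\mu_Y([0,1])^p]<\infty$ for $0<p<\frac{4}{\gamma^2}$ and all negative moments finite. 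The case $p\le 0$ is then immediate: picking any bounded subinterval $I'\subseteq I$ one has $\int_I \ge \int_{I'}$, hence $(\int_I \cdots)^p \le (\int_{I'}\cdots)^p$; bounding $\int_{I'}e^{\frac{\gamma}{2}\mathcal{B}^{\lambda}_s}\mu_Y(ds)\ge e^{\frac{\gamma}{2}\inf_{I'}\mathcal{B}^{\lambda}}\mu_Y(I')$ and using independence reduces the estimate to the product of a finite exponential moment of $|\inf_{I'}\mathcal{B}^{\lambda}|$ (Gaussian tails over a fixed window) and a finite negative moment of $\mu_Y(I')$.

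For $0<p<\frac{4}{\gamma^2}$ it suffices, by positivity of the integrand, to treat $I=\mathbb{R}$. I would partition $\mathbb{R}=\bigcup_{n\in\mathbb{Z}}[n,n+1]$ and write the integral as $\frac{1}{2}\sum_n \xi_n$ with $\xi_n = \int_n^{n+1}e^{\frac{\gamma}{2}\mathcal{B}^{\lambda}_s}\mu_Y(ds)$. Bounding $\xi_n \le e^{\frac{\gamma}{2}\sup_{[n,n+1]}\mathcal{B}^{\lambda}}\mu_Y([n,n+1])$ and using independence together with stationarity gives
\[
\E[\xi_n^p] \le \E\big[e^{\frac{\gamma p}{2}\sup_{s\in[n,n+1]}\mathcal{B}^{\lambda}_s}\big]\,\E[\mu_Y([0,1])^p].
\]
For $0<p<1$ I would use subadditivity of $x\mapsto x^p$ to get a bound by $\frac{1}{2^p}\sum_n \E[\xi_n^p]$, and for $1\le p<\frac{4}{\gamma^2}$ the Minkowski inequality to bound the $L^p$ norm by $\frac{1}{2}\sum_n \E[\xi_n^p]^{1/p}$. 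In both cases everything reduces to the summability over $n$ of $\E[e^{\frac{\gamma p}{2}\sup_{[n,n+1]}\mathcal{B}^{\lambda}_s}]$ (or its $p$-th root).

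The heart of the matter is therefore the estimate that there exist $C,c>0$ with $\E[e^{\frac{\gamma p}{2}\sup_{s\in[n,n+1]}\mathcal{B}^{\lambda}_s}]\le C e^{-c|n|}$. Since $\mathcal{B}^{\lambda}_s\le 0$ this expectation is at most $1$; the exponential gain comes from the fact that the conditioned process drifts to $-\infty$ at linear rate $\lambda$, so its running value near time $n$ concentrates around $-\lambda|n|$. Concretely I would control the point value through the entrance density of the drift-$(-\lambda)$ Brownian motion conditioned to stay negative and verify that $\E[e^{q\mathcal{B}^{\lambda}_n}]$ decays like $e^{-\min(q(2\lambda-q)/2,\,\lambda^2/2)\,|n|}$ for every $q>0$, then pass from the point value to the supremum over the unit window by Cauchy–Schwarz, the factor $\E[e^{2q\sup_{[n,n+1]}(\mathcal{B}^{\lambda}_s-\mathcal{B}^{\lambda}_n)}]$ being bounded uniformly in $n$ by local regularity of the process.

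I expect the main obstacle to be exactly this last exponential-moment bound: because the conditioned process is an entrance law from $0$ rather than an ordinary conditioning (the unconditioned path started at $0$ cannot stay negative), one cannot apply a naive Cameron–Martin shift. Instead I would use the explicit hitting-time density and the Williams time-reversal of Theorem \ref{theo Williams}, together with the maximal estimates already developed in Lemmas \ref{lemma sup}, \ref{lemma I} and \ref{lemma J}, to obtain the decay; the $s<0$ half of $\mathcal{B}^{\lambda}$ is handled identically by symmetry. Note that only $\lambda>0$ (i.e. $\alpha<Q$) and $p<\frac{4}{\gamma^2}$ enter the argument, consistently with the stated hypotheses.
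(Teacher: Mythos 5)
Your overall strategy is sound and, in fact, it is essentially the argument behind the result the paper invokes: the paper does not prove Lemma \ref{lemme_dozz} itself but defers to Lemma 2.8 of \cite{DOZZ2}, whose proof has exactly this structure (unit-block decomposition, independence of $\mathcal{B}^{\lambda}$ and $Y$, stationarity of $s\mapsto Y(e^{-s/2})$, subadditivity for $0<p<1$ and Minkowski for $p\ge 1$, and exponential decay in $n$ of the blocks coming from the drift). Your treatment of the case $p\le 0$ and the reduction of the positive-moment case to the decay of $\E[e^{\frac{\gamma p}{2}\sup_{[n,n+1]}\mathcal{B}^{\lambda}_s}]$ are correct as written.

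The gap is in the key estimate itself. The rate you claim, $\min\bigl(\tfrac{q(2\lambda-q)}{2},\tfrac{\lambda^2}{2}\bigr)$, is wrong for $q>\lambda$ and becomes nonpositive — hence asserts no decay at all — once $q\ge 2\lambda$. That regime is squarely inside the hypotheses of the lemma: $\lambda=\tfrac{Q-\alpha}{2}$ can be arbitrarily small ($\alpha$ close to $Q$) while $p$, hence $q=\tfrac{\gamma p}{2}$, stays of order one; moreover your Cauchy--Schwarz step doubles the exponent to $\gamma p$, and the moment the paper actually needs in Proposition \ref{proposition reflection} is $p=\tfrac{2}{\gamma}(Q-\alpha)$, i.e.\ exactly $q=2\lambda$, where your formula returns rate $0$. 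The source of the error is that $\tfrac{q(2\lambda-q)}{2}$ is the rate of the \emph{unconditioned} drift-$(-\lambda)$ motion (equivalently, an unconstrained Laplace maximization), whereas the entrance density of the conditioned process is supported on $y\le 0$: carrying out the maximization over the correct half-line, the boundary point dominates when $q\ge\lambda$ and the true rate is $\tfrac{\lambda^2}{2}-\tfrac{((\lambda-q)\vee 0)^2}{2}$, i.e.\ $\tfrac{q(2\lambda-q)}{2}$ for $q\le\lambda$ and $\tfrac{\lambda^2}{2}$ for $q\ge\lambda$, which is strictly positive for every $q>0$. The simplest repair avoids redoing the computation: since $\mathcal{B}^{\lambda}_n\le 0$, one has $e^{q\mathcal{B}^{\lambda}_n}\le e^{(q\wedge\lambda)\mathcal{B}^{\lambda}_n}$, so you may replace $q$ by $q\wedge\lambda$ and apply your formula only in the regime $q\le\lambda$ where it is correct, obtaining the rate $\tfrac{\lambda^2}{2}$ whenever $q\ge\lambda$. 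With this correction (for $q\in(\lambda,2\lambda)$ your formula was merely suboptimal but still positive, so no harm there) the summability over $n$ holds for all $p<\tfrac{4}{\gamma^2}$ and your proof goes through.
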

This lemma tells us that the additional term $e^{ \frac{\gamma}{2}\mathcal{B}^{\frac{Q-\alpha}{2}}_s} $ behaves nicely and the bound on $p$ is the same as in the case of GMC moments.
\begin{proof}[Proof of Proposition \ref{proposition reflection}]
Using the decomposition $X(e^{-s/2}) = B_s + Y(e^{-s/2})$ we have,
\begin{align*}
I^{\partial}_{1,\eta}( \alpha ) &= \int_0^{\eta} x^{-\frac{\gamma \alpha}{2}} e^{\frac{\gamma}{2} X(x)} dx=\frac{1}{2} \int_{-2\ln \eta}^{\infty} e^{ \frac{\gamma}{2}(B_s-s(\frac{\gamma}{4}+\frac{1}{\gamma}-\frac{\alpha}{2})) } e^{\frac{\gamma}{2} Y(e^{-s/2})  } ds\\
&\overset{\text{Theorem \ref{theo Williams}}}{=}e^{\frac{\gamma}{2}M}\frac{1}{2}\int_{-2\ln \eta-L_M}^{\infty}  e^{ \frac{\gamma}{2}\mathcal{B}^{\frac{Q-\alpha}{2}}_s} e^{\frac{\gamma}{2} Y(e^{-s/2})  } ds,
\end{align*}
where  $ M = \sup_{s \geqslant 0} (B_s -\frac{Q-\alpha}{2} s) $ and $L_M$ is the last time $\left( \mathcal{B}^{\frac{Q-\alpha}{2}}_{-s} \right)_{s \ge 0}$ hits $-M$. The law of $M$ is given by:
\begin{equation}
\mathbb{P}( e^{\frac{\gamma}{2} M} >v ) = \frac{1}{v^{ \frac{2(Q-\alpha)}{\gamma}}} \quad (v\ge 1).
\end{equation}
We denote
\begin{align*}
&\rho_A(\frac{Q-\alpha}{2}) = \frac{1}{2}\int_{-L_A}^{\infty}  e^{ \frac{\gamma}{2}\mathcal{B}^{\frac{Q-\alpha}{2}}_s} e^{\frac{\gamma}{2} Y(e^{-s/2})  } ds,\\
&\rho(\frac{Q-\alpha}{2}) = \frac{1}{2}\int_{-\infty}^{\infty}  e^{ \frac{\gamma}{2}\mathcal{B}^{\frac{Q-\alpha}{2}}_s} e^{\frac{\gamma}{2} Y(e^{-s/2})  } ds,
\end{align*}
and study the upper and lower bounds for $\P(I^{\partial}_{1,\eta}( \alpha )>u)$.\\
$\Diamond$ Upper bound: 
$$\P(I^{\partial}_{1,\eta}( \alpha )>u) \le \P(e^{\frac{\gamma}{2}M}\rho(\frac{Q-\alpha}{2})>u)  \le  \frac{\E[\rho(\frac{Q-\alpha}{2})^{\frac{2(Q-\alpha)}{\gamma}}]}{u^{\frac{2(Q-\alpha)}{\gamma}}}=\frac{ \overline{R}_1^{\partial}(\alpha)}{u^{ \frac{2}{\gamma}(Q - \alpha) }}.$$
$\Diamond$ Lower bound: we first show that the tail behavior is concentrated at $x = 0$ and that the value of $\eta$ does not matter. Consider $h,\epsilon>0$ sufficiently small,
\begin{align}
\P(I^{\partial}_{1,1}( \alpha )>u+u^{1-h})&-\P(I^{\partial}_{1,\eta}( \alpha )>u)\le \P(\int_{\eta}^1 x^{-\frac{\gamma \alpha}{2}} e^{\frac{\gamma}{2} X(x)} dx >u^{1-h})\nonumber \\
& \le \frac{\E[(\int_{\eta}^1 x^{-\frac{\gamma \alpha}{2}} e^{\frac{\gamma}{2} X(x)} dx)^{\frac{4}{\gamma^2}-\epsilon}]}{u^{(1-h)(\frac{4}{\gamma^2}-\epsilon)}}
 = O_{u\to \infty}(\frac{1}{u^{\frac{2(Q-\alpha)}{\gamma}+\nu}}), \label{equation 442}
\end{align}
 where $\nu>0$ can be any constant that satisfies $\nu \le (1-h)(\frac{4}{\gamma^2}-\epsilon) - \frac{2(Q-\alpha)}{\gamma^2}$. Thus it suffices to study the tail behavior of $I^{\partial}_{1,1}( \alpha )$.
Take $A=\frac{2\nu}{\gamma}\ln u$, 
\begin{align*}
&\quad \P(I^{\partial}_{1,1}( \alpha )>u) \\
&\ge \P(e^{\frac{\gamma}{2}M}\rho_A(\frac{Q-\alpha}{2})>u, M>A)\\ 
 &=  \P \left(e^{\frac{\gamma}{2}M}> \max \left\{ \frac{u}{\rho_A(\frac{Q-\alpha}{2})}, e^{\frac{\gamma }{2}A}    \right\} \right)  \\
 &=  \E \left[  \min \left\{ \frac{\rho_A(\frac{Q-\alpha}{2})}{u}, \frac{1}{u^{\nu}}  \right\}^{\frac{2(Q-\alpha)}{\gamma}}   \right]     \\
 & \ge   u^{-\frac{2(Q-\alpha)}{\gamma}} \left( \E [  \rho_A(\frac{Q-\alpha}{2})^{\frac{2(Q-\alpha)}{\gamma}}  ]-\E [  \rho_A(\frac{Q-\alpha}{2})^{\frac{2(Q-\alpha)}{\gamma}} \mathbf{1}_{\rho_A(\frac{Q-\alpha}{2}) > u^{1-\nu}} ] \right)   
\end{align*}
 Take $h'>1$ a constant such that $h'\frac{2(Q-\alpha)}{\gamma} < \frac{4}{\gamma^2}$, by H\"older's inequality and Markov inequality:
\begin{align*}
&E [  \rho_A(\frac{Q-\alpha}{2})^{\frac{2(Q-\alpha)}{\gamma}} \mathbf{1}_{\rho_A(\frac{Q-\alpha}{2}) > u^{1-\nu}} ] \\
\le &\E[\rho_A(\frac{Q-\alpha}{2})^{h'\frac{2(Q-\alpha)}{\gamma}}]^{\frac{1}{h'}} \P(\rho_A(\frac{Q-\alpha}{2})^{\frac{2(Q-\alpha)}{\gamma}} > u^{1-\nu})^{\frac{h'-1}{h'}}\\
\le & \E[\rho_A(\frac{Q-\alpha}{2})^{h'\frac{2(Q-\alpha)}{\gamma}}] u^{-(1-\nu)(h'-1)} = O(u^{-(1-\nu)(h'-1)}).
\end{align*}
We impose additionally that $\nu$ satisfies $\nu < (1-\nu)(h'-1)$, then
\begin{equation}
\P(I^{\partial}_{1,1}( \alpha )>u) \ge u^{-\frac{2(Q-\alpha)}{\gamma}} \E [  \rho_A(\frac{Q-\alpha}{2})^{\frac{2(Q-\alpha)}{\gamma}}  ] + O(u^{-\frac{2(Q-\alpha)}{\gamma}-\nu}).
\end{equation}

We claim that for $u>1$ and for some $c>0$,
\begin{equation}\label{equation 4.42}
\E[\rho(\frac{Q-\alpha}{2})^{\frac{2(Q-\alpha)}{\gamma}}]-\E[\rho_A(\frac{Q-\alpha}{2})^{\frac{2(Q-\alpha)}{\gamma}}] \le c u^{-\nu}.
\end{equation} 
This shows that:
\begin{equation}
\P(I^{\partial}_{1,1}( \alpha )>u) =\frac{\overline{R}_1^{\partial}(\alpha)}{u^{\frac{2(Q-\alpha)}{\gamma}}} + O(\frac{1}{u^{\frac{2(Q-\alpha)}{\gamma}+\nu}}) .
\end{equation}
By applying the tail result to \eqref{equation 442} we deduce,
\begin{equation}
\P(I^{\partial}_{1,\eta}( \alpha )>u) =\frac{\overline{R}_1^{\partial}(\alpha)}{u^{\frac{2(Q-\alpha)}{\gamma}}} + O(\frac{1}{u^{\frac{2(Q-\alpha)}{\gamma}+\min(\nu,h)}}) ,
\end{equation}
which finishes the proof for the first part.
For the second part let $\epsilon >0$, the value of $ \overline{R}_1^{\partial}(\alpha)$ is then determined by the following limit, with $p= \frac{2(Q-\alpha)}{\gamma}$,
\begin{equation}
 \lim_{\epsilon \rightarrow 0} \epsilon \mathbb{E} [ I_{1,1}^{\partial}(\alpha)^{p- \epsilon} ] = p \overline{R}_1^{\partial}(\alpha).
\end{equation}
With our Theorem \ref{main_result} we can compute this limit and get:
\begin{footnotesize}
\begin{align*}
p\overline{R}_1^{\partial}(\alpha) &=  \frac{(2 \pi)^p (\frac{2}{\gamma})^{p\frac{\gamma^2}{4}}    
\Gamma_{\frac{\gamma}{2}} ( \frac{2}{\gamma} -p \frac{\gamma}{2} )
\Gamma_{\frac{\gamma}{2}}( \frac{2}{\gamma} -(p-1) \frac{\gamma}{2}  ) \Gamma_{\frac{\gamma}{2}}(
\frac{4}{\gamma} -\alpha-(p-2) \frac{\gamma}{2}  ) }{ \Gamma( 1  -  \frac{\gamma^2}{4} ) ^p    \Gamma_{\frac{\gamma}{2}}(\frac{2}{\gamma}) \Gamma_{\frac{\gamma}{2}}( \frac{2}{\gamma}-\alpha+
\frac{\gamma}{2}  ) \Gamma_{\frac{\gamma}{2}}( \frac{2}{\gamma} + \frac{\gamma}{2}  )
\Gamma_{\frac{\gamma}{2}}( \frac{4}{\gamma}-\alpha -(2p-2) \frac{\gamma}{2}  )  } \lim_{\epsilon \rightarrow 0} \epsilon \Gamma_{\frac{\gamma}{2}}(\frac{\gamma \epsilon }{2}) \\
&= \frac{(2 \pi)^p (\frac{2}{\gamma})^{p\frac{\gamma^2}{4}}\Gamma_{\frac{\gamma}{2}}( \alpha- \frac{\gamma}{2}  ) }{ \Gamma( 1  -  \frac{\gamma^2}{4} ) ^p 
    \Gamma_{\frac{\gamma}{2}}(\frac{2}{\gamma}) \Gamma_{\frac{\gamma}{2}}( Q-\alpha )    } \frac{1}{\sqrt{2\pi}} (\frac{\gamma}{2})^{-\frac{1}{2}}\Gamma_{\frac{\gamma}{2}}(\frac{2}{\gamma})\\
&= \frac{1}{\sqrt{\gamma \pi}  } \frac{ (2 \pi)^{ \frac{2}{\gamma}(Q -\alpha ) } (\frac{2}{\gamma})^{ \frac{\gamma}{2}(Q -\alpha )  }  }{ \Gamma(1 -\frac{\gamma^2}{4}  )^{ \frac{2}{\gamma}(Q -\alpha ) } } \frac{ \Gamma_{\frac{\gamma}{2}}(\alpha - \frac{\gamma}{2}  )}{\Gamma_{\frac{\gamma}{2}}(Q- \alpha )} .
\end{align*}
\end{footnotesize}
It remains to show \eqref{equation 4.42}. By \eqref{equation 4.3} of the Williams decomposition theorem of appendix \ref{sec_th}, the process $\hat{\mathcal{B}}_s^{\frac{Q-\alpha}{2}} $ defined for $s\le 0$ by $$\hat{\mathcal{B}}_s^{\frac{Q-\alpha}{2}} = \mathcal{B}_{s-L_{\frac{2\nu}{\gamma}\ln u}}^{\frac{Q-\alpha}{2}}+\frac{2\nu}{\gamma}\ln u$$ is independent from everything and has the same law as $(\mathcal{B}^{\frac{Q-\alpha}{2}}_s)_{s\le 0}$. We can then write,
\begin{equation}
\rho(\frac{Q-\alpha}{2})= A_1+u^{-\nu} A_2,
\end{equation}
where:
\begin{align}
&A_1 = \frac{1}{2}\int_{-L_{\frac{2\nu}{\gamma}\ln u}}^{\infty}  e^{ \frac{\gamma}{2}\mathcal{B}^{\frac{Q-\alpha}{2}}_s} e^{\frac{\gamma}{2} Y(e^{-s/2})  } ds,\\ \nonumber
  &A_2 = \frac{1}{2}\int_{-\infty}^{0}  e^{ \frac{\gamma}{2}\hat{\mathcal{B}}^{\frac{Q-\alpha}{2}}_s} e^{\frac{\gamma}{2} Y(e^{-s/2})  } ds.
\end{align}
By interpolation (see \eqref{equation 5.4} for example),
\begin{align*}
\E[(&A_1+u^{-\nu} A_2)^{\frac{2(Q-\alpha)}{\gamma}}-A_1^{\frac{2(Q-\alpha)}{\gamma}}]\\ 
&\le \frac{2(Q-\alpha)}{\gamma}u^{-\nu}\E[A_2 \max\{\rho(\frac{Q-\alpha}{2})^{\frac{2(Q-\alpha)}{\gamma}-1},A_1^{\frac{2(Q-\alpha)}{\gamma}-1}\}].
\end{align*}
If $\frac{2(Q-\alpha)}{\gamma}\le 1$,
\begin{align*}
\E[(A_1&+u^{-\nu} A_2)^{\frac{2(Q-\alpha)}{\gamma}}-A_1^{\frac{2(Q-\alpha)}{\gamma}}] \le  u^{-\nu}\E[A_2 A_1^{\frac{2(Q-\alpha)}{\gamma}-1}]\\
 &\overset{\text{H\"older}}{\le} u^{-\nu}\E[A_2^{p}]^{1/p} \E[A_1^{\frac{p}{p-1}(\frac{2(Q-\alpha)}{\gamma}-1)}] ^{(p-1)/p}<cu^{-\nu},
\end{align*}
where $1<p<\frac{4}{\gamma^2}$ to ensure that $\E[A_2^p]$ is finite, and we know that $$A_1 \ge  \frac{1}{2}\int_{0}^{\infty}  e^{ \frac{\gamma}{2}\mathcal{B}^{\frac{Q-\alpha}{2}}_s} e^{\frac{\gamma}{2} Y(e^{-s/2})  } ds$$ has negative moments. On the other hand, if $\frac{2(Q-\alpha)}{\gamma}>1$, then:
\begin{align*}
\E[(A_1+u^{-\nu} A_2)^{\frac{2(Q-\alpha)}{\gamma}}-A_1^{\frac{2(Q-\alpha)}{\gamma}}] \le & \frac{2(Q-\alpha)}{\gamma} u^{-\nu}\E[\rho(\frac{Q-\alpha}{2})^{\frac{2(Q-\alpha)}{\gamma}}] < cu^{-\nu}.
\end{align*}
This last upper bound comes from the fact that the moment of $\rho(\frac{Q-\alpha}{2})$ is finite thanks to Lemma \ref{lemme_dozz} and since $\frac{2(Q-\alpha)}{\gamma}<\frac{4}{\gamma^2}$.
\end{proof}

\section{Special functions}\label{sec_special}

Lastly we include here a detailed discussion on hypergeometric functions and on the special functions $\Gamma_{\frac{\gamma}{2}}$ and $G$ that we have used in our paper. First, let us discuss the theory of hypergeometric equations and the so-called connection formulas between the different bases of their solutions. For $A>0$ let $\Gamma(A) = \int_0^{\infty} t^{A-1} e^{-t} dt $ denote the standard Gamma function and let $(A)_n : = \frac{\Gamma(A +n)}{\Gamma(A)}$. For $A, B, C,$ and $x$ real numbers we define the hypergeometric function $F$ by:
\begin{equation}
F(A,B,C,x) :=  \sum_{n=0}^{\infty} \frac{(A)_n (B)_n}{n! (C)_n} x^n.
\end{equation}
This function can be used to solve the following hypergeometric equation:
\begin{equation}
( t (1-t) \frac{d^2}{d t^2} + ( C - (A +B +1)t) \frac{d}{dt} - AB) U(t) =0.
\end{equation}
For our purposes we will always work with the parameter $t \in (- \infty,0)$ and we can give the following two bases of solutions, under the assumption that $C$ and $A - B$ are not integers,
\begin{align*}
U(t) &= C_1 F(A,B,C,t) \\ 
&+ C_2 |t|^{1 -C} F(1 + A-C, 1 +B - C, 2 -C, t) \\
&= D_1 |t|^{-A}F(A,1+A-C,1+A-B,t^{-1})\\ 
& + D_2 |t|^{-B} F(B, 1 +B - C, 1 +B - A, t^{-1}),
\end{align*}
where the first expression is an expansion in power of $\vert t \vert$ and the second is an expansion in powers of $ \vert t \vert^{-1}$. For each basis we have two real constants that parametrize the solution space, $C_1, C_2$ and $D_1, D_2$. We thus expect to have an explicit change of basis formula that will give a link between $C_1, C_2$ and $D_1, D_2$. This is precisely what give the so-called connection formulas:
\begin{equation}\label{hpy1}
\begin{pmatrix}
C_1  \\
C_2
\end{pmatrix} 
= 
 \begin{pmatrix}
\frac{\Gamma(1 -C ) \Gamma( A - B +1  )  }{\Gamma(A - C +1 )\Gamma(1- B   )  } & \frac{\Gamma(1 -C) \Gamma( B- A +1 )  }{\Gamma(B - C +1 )\Gamma( 1- A )   } \\
 \frac{\Gamma(C-1 ) \Gamma( A - B +1  )  }{\Gamma( A  )  \Gamma( C - B  )} & \frac{\Gamma(C-1 )  \Gamma(B - A +1 ) }{ \Gamma(  B ) \Gamma( C - A )  }
\end{pmatrix} 
\begin{pmatrix}
D_1  \\
D_2 
\end{pmatrix}.
\end{equation}
This relation comes from the theory of hypergeometric equations and we will extensively use it to deduce our shift equations. We will apply it for both hypergeometric equations of Proposition \ref{BPZ}.

We will now provide some explanations on the function $\Gamma_{\frac{\gamma}{2}}(x)$ that we have introduced as well as its connection with the so-called $G$ Barnes' function. Our function $\Gamma_{\frac{\gamma}{2}}(x)$ is equal to the function $\Gamma_b(x)$ defined in the appendix of \cite{nakayama} with $b = \frac{\gamma}{2}$. \footnote{In \cite{Ostro_review} Ostrovsky uses a slightly different special function $\Gamma_2(x \vert \tau) $, the relation with our $\Gamma_{\frac{\gamma}{2}}(x)$ is:  
\begin{equation*}
\Gamma_{\frac{\gamma}{2}}(x) =  (\frac{2}{\gamma})^{\frac{1}{2}(x-\frac{Q}{2})^2} \frac{\Gamma_2( \frac{2x}{\gamma} \vert \tau )}{\Gamma_{2}(\frac{Q}{\gamma}|\tau)}.
\end{equation*}} For all $\gamma \in (0,2) $ and for $x >0$, $\Gamma_{\frac{\gamma}{2}}(x)$ is defined by the integral formula written in Theorem \ref{main_result},
\begin{equation}
\ln \Gamma_{\frac{\gamma}{2}}(x) = \int_0^{\infty} \frac{dt}{t} \left[ \frac{ e^{-xt} -e^{- \frac{Qt}{2}}   }{(1 - e^{- \frac{\gamma t}{2}})(1 - e^{- \frac{2t}{\gamma}})} - \frac{( \frac{Q}{2} -x)^2 }{2}e^{-t} + \frac{ x -\frac{Q}{2}  }{t} \right],
\end{equation} 
where we have $Q = \frac{\gamma}{2} +\frac{2}{\gamma}$. Since the function $\Gamma_{\frac{\gamma}{2}}(x)$ is continuous it is completely determined by the following two shift equations,
\begin{align}
\frac{\Gamma_{\frac{\gamma}{2}}(x)}{\Gamma_{\frac{\gamma}{2}}(x + \frac{\gamma}{2}) }&= \frac{1}{\sqrt{2 \pi}}
\Gamma(\frac{\gamma x}{2}) ( \frac{\gamma}{2} )^{ -\frac{\gamma x}{2} + \frac{1}{2}
}, \\
\frac{\Gamma_{\frac{\gamma}{2}}(x)}{\Gamma_{\frac{\gamma}{2}}(x + \frac{2}{\gamma}) }&= \frac{1}{\sqrt{2 \pi}} \Gamma(\frac{2
x}{\gamma}) ( \frac{\gamma}{2} )^{ \frac{2 x}{\gamma} - \frac{1}{2} },
\end{align}
and by its value in $\frac{Q}{2}$, $\Gamma_{\frac{\gamma}{2}}(\frac{Q}{2} ) =1$. We mention that $\Gamma_{\frac{\gamma}{2}}(x)$ is an analytic function of $x$. 
In the case where $\gamma =2$ the function $\Gamma_{\frac{\gamma}{2}}(x)$ reduces to,
\begin{equation}
\Gamma_{1}(x) = (2 \pi)^{\frac{x}{2} - \frac{1}{2}}  G(x)^{-1},
\end{equation}
where $G(x)$ is the so-called Barnes G function. This function is useful when we study the limit $\gamma  \rightarrow 2$ in section \ref{sec_app}. Finally in our Corollary \ref{law_ostro} we have used a special $\beta_{2,2}$ distribution defined in \cite{Ostro_review}. Here we recall the definition:
\begin{definition} [Existence theorem] $\\$
The distribution $-\ln \beta_{2,2}(a_1,a_2;b_0,b_1,b_2)$ is infinitely divisible on $[0,\infty)$ and has the L\'evy-Khintchine decomposition for $\operatorname{Re} (p) >-b_0$:
\begin{small}
\begin{align}
&\E[\exp(p \ln \beta_{2,2}(a_1, a_2;b_0,b_1,b_2))]\\ \nonumber
 & \quad= \exp \Big(\int_0^{\infty} (e^{-pt}-1)e^{-b_0t} \frac{(1-e^{-b_1t})(1-e^{-b_2t})}{(1-e^{-a_1t})(1-e^{-a_2t})}  \frac{dt}{t}\Big).
\end{align}
\end{small}
Furthermore, the distribution $\ln \beta_{2,2}(a_1,a_2;b_0,b_1,b_2)$ is absolutely continuous with respect to the Lebesgue measure.
\end{definition}
We only work with the case $(a_1,a_2)= (1,\frac{4}{\gamma^2})$. Then  $\beta_{2,2}(1,\frac{4}{\gamma^2};b_0,b_1,b_2)$ depends on 4 parameters $\gamma, b_0, b_1, b_2$ and its real moments $p>-b_0$ are given by the formula:
\begin{small}
\begin{align}
&\mathbb{E}[ \beta_{2,2}(1, \frac{4}{\gamma^2}; b_0, b_1, b_2)^p]\\  \nonumber
&\quad = \frac{\Gamma_{\frac{\gamma}{2}}(\frac{\gamma}{2}(p + b_0)) \Gamma_{\frac{\gamma}{2}}( \frac{\gamma}{2}(b_0 + b_1)) \Gamma_{\frac{\gamma}{2}}(\frac{\gamma}{2}(b_0 + b_2)) \Gamma_{\frac{\gamma}{2}}(\frac{\gamma}{2}(p + b_0 + b_1 +b_2))}{\Gamma_{\frac{\gamma}{2}}(\frac{\gamma}{2}b_0)\Gamma_{\frac{\gamma}{2}}(\frac{\gamma}{2}(p + b_0 + b_1))\Gamma_{\frac{\gamma}{2}}(\frac{\gamma}{2}(p + b_0 +b_2)) \Gamma_{\frac{\gamma}{2}}( \frac{\gamma}{2}(b_0 +b_1 +b_2))}.
\end{align}
\end{small}
Of course we have $\gamma \in (0,2)$ and the real numbers $p, b_0, b_1, b_2$ must be chosen so that the arguments of all the $\Gamma_{\frac{\gamma}{2}}$ are positive.
We conclude this section with a few computations that we need that also involve hypergeometric functions.
\begin{lemma}
For $p< 0$ and $-1<a<0$ or for $0<p<1$ and $-1<a<-p$ we have the identity:
\begin{small}
\begin{equation}\label{equation integral computation 1}
\int_0^{\infty} ( ( u+1)^{ p } - 1) u^{a-1} du=\frac{\Gamma(a)\Gamma(-a-p)}{\Gamma(-p)}.
\end{equation}
\end{small}
\end{lemma}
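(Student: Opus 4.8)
The plan is to reduce the integral to a single Beta integral by one integration by parts, and then to simplify the result using the functional equation of the Gamma function. Before doing anything else I would record the convergence, since it both justifies the manipulations and pins down why the two stated parameter regimes are the natural ones. Near $u=0$ the integrand $((u+1)^p-1)u^{a-1}$ behaves like $p\,u^{a}$, so integrability at the origin requires $a>-1$; near $u=\infty$ it behaves like $-u^{a-1}$ when $p<0$ and like $u^{a+p-1}$ when $0<p<1$, so integrability at infinity requires $a<0$ in the first regime and $a+p<0$ in the second. These are exactly the two sets of hypotheses, so in both cases the left-hand side is a well-defined finite integral.

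Next I would integrate by parts, taking the antiderivative on the power $u^{a-1}$, that is with $g(u)=u^a/a$ and $f(u)=(u+1)^p-1$, so $f'(u)=p(u+1)^{p-1}$. This yields
\[
\int_0^{\infty}\big((u+1)^p-1\big)u^{a-1}\,du=\Big[\big((u+1)^p-1\big)\tfrac{u^a}{a}\Big]_0^{\infty}-\frac{p}{a}\int_0^{\infty}(u+1)^{p-1}u^a\,du .
\]
The boundary term at the origin vanishes because $((u+1)^p-1)u^a\sim p\,u^{a+1}\to 0$, using $a+1>0$. The boundary term at infinity vanishes in both regimes: when $p<0$ one has $((u+1)^p-1)u^a\sim -u^{a}\to 0$ since $a<0$, while when $0<p<1$ one has $((u+1)^p-1)u^a\sim u^{a+p}\to 0$ since $a+p<0$.

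The surviving integral is a standard Beta integral. Writing $(u+1)^{p-1}u^a=u^{(a+1)-1}(1+u)^{-((a+1)+(-a-p))}$ and applying $\int_0^{\infty}u^{x-1}(1+u)^{-(x+y)}\,du=\Gamma(x)\Gamma(y)/\Gamma(x+y)$ with $x=a+1>0$ and $y=-a-p>0$ (this last positivity being precisely the integrability-at-infinity condition checked above), I obtain $\int_0^{\infty}(u+1)^{p-1}u^a\,du=\Gamma(a+1)\Gamma(-a-p)/\Gamma(1-p)$. Substituting back and using $\Gamma(a+1)=a\,\Gamma(a)$ together with $\Gamma(1-p)=-p\,\Gamma(-p)$, the prefactor $-p/a$ exactly cancels the ratio $a/(-p)$, leaving $\Gamma(a)\Gamma(-a-p)/\Gamma(-p)$, which is the claimed value.

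The only genuinely delicate point is the vanishing of the boundary term at infinity, which has to be verified separately in the two parameter regimes (it is where the distinction between $p<0$ and $0<p<1$ enters); everything else is the routine Beta/Gamma calculus. I expect no further obstacle beyond keeping track of these endpoint asymptotics.
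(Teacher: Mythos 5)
Your proof is correct, but it takes a genuinely different route from the paper. The paper splits the integral at $u=1$, expands $(u+1)^p$ binomially on each piece, integrates term by term to obtain two series, recognizes these as the hypergeometric values $\frac{1}{a}F(-p,a,a+1,-1)$ and $-\frac{1}{a+p}F(-p,-a-p,-a-p+1,-1)$, and then concludes via a Kummer-type connection formula at argument $-1$, namely $\bar{b}F(\bar{a}+\bar{b},\bar{a},\bar{a}+1,-1)+\bar{a}F(\bar{a}+\bar{b},\bar{b},\bar{b}+1,-1)=\Gamma(\bar{a}+1)\Gamma(\bar{b}+1)/\Gamma(\bar{a}+\bar{b})$, which it quotes without proof. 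Your integration by parts instead reduces everything to the standard Beta integral $\int_0^{\infty}u^{x-1}(1+u)^{-(x+y)}du=\Gamma(x)\Gamma(y)/\Gamma(x+y)$ together with the functional equation of $\Gamma$, so it is more elementary and self-contained: it avoids both the justification of term-by-term integration of binomial series and the unproven hypergeometric identity, and its only delicate points --- the vanishing of the boundary terms in the two parameter regimes and the positivity $a+1>0$, $-a-p>0$ needed for the Beta integral --- are exactly the ones you verify. What the paper's route buys is coherence with the hypergeometric machinery that is central elsewhere in the paper (the connection formula \eqref{hpy1} is already its main tool), but as a proof of this lemma in isolation yours is cleaner. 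One minor remark: since the paper defines $\Gamma$ only for positive arguments, the quantities $\Gamma(a)$ for $-1<a<0$ (and $\Gamma(-p)$ for $0<p<1$) appearing in the statement must be read through the functional equation $\Gamma(z+1)=z\Gamma(z)$, which is precisely how they enter your final simplification, so there is no issue there.
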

\begin{proof}
Denote by $(x)_n:= x(x+1)\dots (x+n-1)$.
\begin{small}
\begin{align*}
\int_0^{\infty} ( ( u+1)^{ p } &- 1) u^{a-1} du = \sum_{n=0}^{\infty} \frac{(-1)^n}{n!} (-p)_n \frac{1}{n +a } - \sum_{n=0}^{\infty} \frac{(-1)^n}{n!} (-p)_n \frac{1}{ a  +p-n } \\
=&\frac{1}{a}\sum_{n=0}^{\infty} \frac{(-1)^n}{n!} \frac{(-p)_n (a)_n}{(a +1)_n} - \frac{1}{a + p}\sum_{n=0}^{\infty} \frac{(-1)^n}{n!} \frac{(-p)_n(-a - p)_n}{ (-a - p+1 )_n } \\
=&\frac{1}{a}F(-p,a,a+1,-1)-\frac{1}{a+p}F(-p,-a-p,-a-p+1,-1)\\
=&\frac{\Gamma(a)\Gamma(-a-p)}{\Gamma(-p)},
\end{align*}
\end{small}
where in the last line we used the formula, for suitable $\overline{a}, \overline{b} \in \mathbb{R}$,
\begin{small}
\begin{equation*}
\bar{b}F(\bar{a}+\bar{b},\bar{a},\bar{a}+1,-1)+\bar{a}F(\bar{a}+\bar{b},\bar{b},\bar{b}+1,-1)=\frac{\Gamma(\bar{a}+1)\Gamma(\bar{b}+1)}{\Gamma(\bar{a}+\bar{b})}.
\end{equation*}
\end{small}
\end{proof}
\begin{lemma}
For $0<a<1-\frac{\gamma^2}{4}$ we have:
\begin{small}
\begin{equation}\label{equation integral computation 2}
\frac{\gamma^2}{4}\int_0^{\infty}  (y+1)^{ \frac{\gamma^2}{4}-1}y^{a-1}dy = (a+\frac{\gamma^2}{4})\frac{\Gamma(a)\Gamma(-a-\frac{\gamma^2}{4})}{\Gamma(-\frac{\gamma^2}{4})}.
\end{equation}
\end{small}
\end{lemma}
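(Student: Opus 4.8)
The plan is to recognize the integral on the left as an elementary Beta integral and then to convert the resulting Gamma factors into the form appearing on the right by a single application of the functional equation $\Gamma(z+1)=z\Gamma(z)$. Writing $s:=1-\frac{\gamma^2}{4}$, the integrand is $y^{a-1}(1+y)^{-s}$, and the hypothesis $0<a<1-\frac{\gamma^2}{4}$ guarantees both $a>0$ (integrability at the origin) and $s-a>0$ (integrability at infinity, since $y^{a-1}(1+y)^{-s}\sim y^{a-s-1}$), so the integral converges absolutely. First I would perform the standard change of variables $y=\frac{x}{1-x}$, which maps $(0,\infty)$ bijectively onto $(0,1)$ and satisfies $1+y=\frac{1}{1-x}$ and $dy=(1-x)^{-2}dx$; a direct computation turns the integral into
$$\int_0^{\infty}(y+1)^{\frac{\gamma^2}{4}-1}y^{a-1}\,dy=\int_0^1 x^{a-1}(1-x)^{s-a-1}\,dx=B(a,s-a)=\frac{\Gamma(a)\,\Gamma\!\left(1-\frac{\gamma^2}{4}-a\right)}{\Gamma\!\left(1-\frac{\gamma^2}{4}\right)}.$$

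Next I would multiply by $\frac{\gamma^2}{4}$ and rewrite the two Gamma factors carrying the additive $+1$ using $\Gamma(z+1)=z\Gamma(z)$: namely $\Gamma\!\left(1-\frac{\gamma^2}{4}-a\right)=\left(-a-\frac{\gamma^2}{4}\right)\Gamma\!\left(-a-\frac{\gamma^2}{4}\right)$ and $\Gamma\!\left(1-\frac{\gamma^2}{4}\right)=\left(-\frac{\gamma^2}{4}\right)\Gamma\!\left(-\frac{\gamma^2}{4}\right)$. Substituting these and simplifying the numerical prefactor via $\frac{\gamma^2}{4}\cdot\frac{-a-\frac{\gamma^2}{4}}{-\frac{\gamma^2}{4}}=a+\frac{\gamma^2}{4}$ gives exactly
$$\frac{\gamma^2}{4}\int_0^{\infty}(y+1)^{\frac{\gamma^2}{4}-1}y^{a-1}\,dy=\left(a+\frac{\gamma^2}{4}\right)\frac{\Gamma(a)\,\Gamma\!\left(-a-\frac{\gamma^2}{4}\right)}{\Gamma\!\left(-\frac{\gamma^2}{4}\right)},$$
which is the claimed identity \eqref{equation integral computation 2}.

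There is essentially no genuine obstacle here, as the argument is a routine reduction to the Beta function; the only point worth flagging is bookkeeping. Since $\gamma\in(0,2)$ we have $\frac{\gamma^2}{4}\in(0,1)$, so the arguments $a$, $1-\frac{\gamma^2}{4}-a$, and $1-\frac{\gamma^2}{4}$ of the Gamma functions produced by the Beta integral are all strictly positive and the Euler identity applies verbatim; the factors $\Gamma\!\left(-a-\frac{\gamma^2}{4}\right)$ and $\Gamma\!\left(-\frac{\gamma^2}{4}\right)$ on the right are negative (non-integer) arguments, understood through meromorphic continuation, and the single instance of the functional equation used to introduce them is valid there. One could equally well stop at $B\!\left(a,1-\frac{\gamma^2}{4}-a\right)$, but matching the precise normalization used in subsection \ref{subsection shift p} requires rewriting it as above.
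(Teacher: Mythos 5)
Your proof is correct, but it takes a genuinely different route from the paper's. The paper deduces the identity from the preceding lemma \eqref{equation integral computation 1}: by scaling, $\int_0^{\infty}\bigl((y+z)^{\gamma^2/4}-1\bigr)y^{a-1}\,dy = z^{a+\gamma^2/4}\,\Gamma(a)\Gamma(-a-\tfrac{\gamma^2}{4})/\Gamma(-\tfrac{\gamma^2}{4})$, and then differentiating in $z$ and evaluating at $z=1$ produces \eqref{equation integral computation 2}. You instead evaluate the integral directly as a Beta integral via the substitution $y=x/(1-x)$ and then shift the Gamma arguments with $\Gamma(z+1)=z\Gamma(z)$. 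Your computation is self-contained — it needs neither \eqref{equation integral computation 1} nor the hypergeometric-series manipulation used to prove it — and its convergence conditions match the stated hypothesis $0<a<1-\tfrac{\gamma^2}{4}$ exactly. The paper's route is economical given that \eqref{equation integral computation 1} is already available, but it glosses over a point your argument avoids: the undifferentiated identity requires $-1<a<-\tfrac{\gamma^2}{4}$ for convergence, whereas the differentiated integral converges precisely for $0<a<1-\tfrac{\gamma^2}{4}$; these ranges are disjoint, so strictly speaking the paper's differentiation step rests on an implicit analytic continuation in $a$ (both sides being analytic where defined). Your direct Beta-function evaluation sidesteps that subtlety entirely, at the modest cost of not reusing the earlier lemma.
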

\begin{proof}
By the previous lemma,
\begin{small} 
\begin{equation*}
\int_0^{\infty} ( ( y+z)^{ \frac{\gamma^2}{4} } - 1) y^{a-1} dy=z^{a+\frac{\gamma^2}{4}}\frac{\Gamma(a)\Gamma(-a-\frac{\gamma^2}{4})}{\Gamma(-\frac{\gamma^2}{4})}.
\end{equation*}
\end{small}
We take the derivative in $z$ in the above equation and evaluate it at $z=1$ to get:
\begin{small}
\begin{equation*}
\frac{\gamma^2}{4}\int_0^{\infty}  (y+1)^{ \frac{\gamma^2}{4}-1}y^{a-1}dy = (a+\frac{\gamma^2}{4})\frac{\Gamma(a)\Gamma(-a-\frac{\gamma^2}{4})}{\Gamma(-\frac{\gamma^2}{4})}.
\end{equation*}
\end{small}
\end{proof}

\section*{Acknowledgements}
We are grateful to R\'emi Rhodes and Vincent Vargas for making us discover Gaussian multiplicative chaos and Liouville conformal field theory. We also very warmly thank Yan Fyodorov, Jon Keating, and Pierre Le Doussal for many fruitful discussions. Lastly we would like to thank the anonymous referee for his numerous remarks and suggestions that helped improve this paper.


\begin{thebibliography}{9}
\bibitem{APS}
Aru J., Powell E., Sep\'ulveda A.: Critical Liouville measure as a limit of subcritical measures, arXiv:1802.08433. 

\bibitem{Bacry} Bacry E., Muzy J.-F.: Log-infinitely divisible multifractal random walks, \emph{Commun. Math. Phys.}, \textbf{236}: 449-475 (2003).

\bibitem{BPZ} Belavin A.A., Polyakov A.M., Zamolodchikov A.B.: Infinite conformal symmetry in two-dimensional quantum field theory,  \emph{Nuclear. Physics.}, B241, 333-380, (1984). 


\bibitem{Ber} Berestycki N.: An elementary approach to Gaussian multiplicative chaos, arXiv:1506.09113.

\bibitem{BWW} Berestycki N., Webb C., Wong M.D.:  Random hermitian matrices and Gaussian multiplicative chaos, arXiv:1701.03289.

\bibitem{Sphere} 
David F., Kupiainen A., Rhodes R., Vargas V.: Liouville quantum gravity on the Riemann sphere, \emph{Commun. Math. Phys.}, \textbf{342}: 869-907 (2016).

\bibitem{DO}
Dorn H., Otto H.-J.: Two and three point functions in Liouville theory, \emph{Nuclear Physics B}, \textbf{429} (2), 375-388 (1994). 

\bibitem{Mating} Duplantier B., Miller J., Sheffield S.: Liouville quantum gravity as a mating of trees, arXiv:1409.7055.
 

\bibitem{Selberg} Forrester P.J., Ole Warnaar S.: The importance of the Selberg integral, \emph{Bulletin of the American Mathematical Society},
Vol. 45, No. 4, 489-534, (2008).



\bibitem{FyBo}
Fyodorov Y.V., Bouchaud J.P.: Freezing and extreme value statistics in a Random Energy Model with logarithmically correlated potential, \emph{Journal of Physics A: Mathematical and Theoretical}, Volume 41, Number 37, 372001 (2008).


\bibitem{FLe} Fyodorov Y.V., Le Doussal P.: Moments of the position of the maximum for GUE characteristic polynomials and for log-correlated Gaussian processes, \emph{J. Stat. Phys.} Volume 164, pp 190-240, July (2016).

\bibitem{FLeR} Fyodorov Y.V., Le Doussal P., Rosso A.: Statistical Mechanics of Logarithmic REM: Duality, Freezing
and Extreme Value Statistics of $1/f$ Noises generated by Gaussian Free Fields, \emph{J. Stat. Mech.} P10005 (2009).

\bibitem{FyoSimm} Fyodorov Y.V., Simm N.J.: On the distribution of the maximum value of the characteristic polynomial of GUE random matrices,
\emph{Nonlinearity} \textbf{29} (9), 2837-2855 (2016)


\bibitem{Disk}
Huang Y., Rhodes R., Vargas V.: Liouville Quantum Gravity on the unit disk, arXiv:1502.04343.


\bibitem{Kah} Kahane J.-P.: Sur le chaos multiplicatif, \emph{Ann. Sci. Math. Qu{\'e}bec}, \textbf{9}(2), 105-150 (1985).

\bibitem{kaneko} Kaneko J.: Selberg integrals and hypergeometric functions associated with Jack polynomials, \emph{SIAM J. Math. Anal.}, Vol. 24, No. 4, pp. 1086-1110, July (1993).

\bibitem{DOZZ1} Kupiainen A., Rhodes R., Vargas V.: Local conformal structure of Liouville quantum gravity, arXiv:1512.01802.

\bibitem{DOZZ2} Kupiainen A., Rhodes R., Vargas V.: Integrability of Liouville theory: Proof of the DOZZ formula, arXiv:1707.08785.

\bibitem{lacoin} Lacoin, H., Rhodes, R., Vargas, V., Path integral for quantum Mabuchi K-energy arXiv:1807.01758. 

\bibitem{nakayama} Nakayama Y.: Liouville field theory - a decade after the revolution, arXiv:hep-th/0402009.

\bibitem{Ostro5} Ostrovsky D.: Intermittency expansions for limit lognormal multifractals, \emph{Lett. Math. Phys.} \textbf{83}, 265-280 (2008).

\bibitem{Ostro1} Ostrovsky D.: Mellin transform of the limit lognormal distribution, \emph{Commun. Math. Phys.}, \textbf{288}: 287-310 (2009).


\bibitem{Ostro3} Ostrovsky D.: Selberg integral as a meromorphic function, \emph{Int. Math. Res. Not.}, No. 17, pp. 3988-4028 (2013).

\bibitem{Ostro2} Ostrovsky D.: On Barnes beta distributions and applications to the maximum distribution of the 2D Gaussian free field, \emph{J. Stat. Phys.} 164, 1292-1317, (2016).

\bibitem{Ostro_review} Ostrovsky D.: A review of conjectured laws of total mass of Bacry-Muzy GMC measures on the interval and circle and their applications, arXiv:1803.06677.

\bibitem{Ponsot} Ponsot B.: Recent progress in Liouville field theory, \emph{International Journal of Modern Physics A}, World Scientific Publishing, 19S2, pp. 311-335 (2004).

\bibitem{remy} Remy G.: The Fyodorov-Bouchaud formula and Liouville conformal field theory, arXiv:1710.06897.

\bibitem{review} Rhodes R., Vargas V.: Gaussian multiplicative chaos and applications: a review, \emph{Probability Surveys}, \textbf{11}: 315-392 (2014).

\bibitem{Houches}
Rhodes R., Vargas V.: Lecture notes on Gaussian multiplicative chaos and Liouville Quantum Gravity, to appear in \emph{Les Houches summer school proceedings}, arXiv:1602.07323.


\bibitem{tail} Rhodes R., Vargas V.: The tail expansion of Gaussian multiplicative chaos and the Liouville reflection coefficient, arXiv:1710.02096.

\bibitem{ReviewDOZZ} Vargas V.: Lecture notes on Liouville theory and the DOZZ formula, arXiv:1712.00829.

\bibitem{Williams}
Williams D.: Path decomposition and continuity of local time for one-dimensional diffusions, I, \emph{Proceedings of the
London Mathematical Society} \textbf{s3-28} (4), 738-768 (1974).

\bibitem{ZZ}
Zamolodchikov A.B., Zamolodchikov A.B.: Structure constants and conformal bootstrap in Liouville field theory, \emph{Nuclear Physics B}, \textbf{477} (2), 577-605 (1996).
\end{thebibliography}
\end{document}